\documentclass[reqno]{amsart}

\usepackage{amsthm, amsmath, amssymb, amsfonts, graphicx}

\usepackage[english]{babel}

\usepackage{bbm}
\usepackage[foot]{amsaddr}

\usepackage[inline]{enumitem}

\usepackage{float}
\restylefloat{table}

\usepackage{graphicx}
\usepackage[font=sl,labelfont=bf]{caption}
\usepackage{subcaption}

\usepackage[colorlinks=true, pdfstartview=FitV, linkcolor=blue,
            citecolor=blue, urlcolor=blue]{hyperref}
\usepackage[usenames]{color} 

\usepackage[round]{natbib}

\newtheorem{theorem}{Theorem}
\newtheorem{proposition}[theorem]{Proposition}
\newtheorem{lemma}[theorem]{Lemma}
\newtheorem{corollary}[theorem]{Corollary}

\theoremstyle{definition}

\newtheorem{example}[theorem]{Example}
\theoremstyle{remark}
\newtheorem{remark}[theorem]{Remark}

\numberwithin{equation}{section}
\numberwithin{theorem}{section}

\DeclareMathOperator*{\argmax}{arg\,max} 
\DeclareMathOperator*{\Var}{Var} 

\def\Bbox{\mathfrak{B}}

\DeclareMathOperator{\spann}{span}

\title[Long-run risk-sensitive  portfolio optimisation]{Long-run risk-sensitive  portfolio optimisation with proportional transaction costs and log L\'{e}vy asset prices}
\author{Damian Jelito$^{\dagger}$}

\author{\L{}ukasz Stettner$^{\ast}$}
\address{$^{\dagger}$ Jagiellonian University, Institute of Mathematics, Krak\'o{w}, Poland}
\address{$^{\ast}$Institute of Mathematics Polish Academy of Sciences, Warsaw, Poland}
\thanks{$^{\dagger}$Corresponding author}
\email{damian.jelito@uj.edu.pl}
\email{stettner@impan.pl}

\makeatletter
\def\namedlabel#1#2{\begingroup
    #2%
    \def\@currentlabel{#2}%
    \phantomsection\label{#1}\endgroup
}
\makeatother

\begin{document}

\begin{abstract}
We study a long-run risk-sensitive portfolio problem with proportional transaction costs in a continuous-time market whose log-prices are given as a L\'{e}vy process, and rebalancing is possible only at random moments of investment opportunities. Using a Schauder fixed-point argument, we solve the ergodic Bellman equation without any mixing assumptions. Under additional full-support and growth conditions, the solution to the Bellman equation is unique, with the unique continuous maximiser. We also prove vanishing risk-aversion asymptotics towards the risk-neutral (Kelly) problem and convergence of a dyadic-time-grid approximation of the random intervention moments. Numerical examples illustrate the results.

\bigskip
\noindent \textbf{Keywords:} risk-sensitive portfolio optimisation; proportional transaction costs; Kelly criterion; ergodic Bellman equation; L\'{e}vy process

\bigskip
\noindent \textbf{MSC2020 subject classifications:}  91G10, 93E20, 49L20, 90C40
\end{abstract}

\maketitle


\section{Introduction}\label{S:intro}

In this paper, we consider a long-run risk-sensitive portfolio optimisation problem with proportional transaction costs, in which the portfolio composition can be modified only at some externally given random moments of investment opportunities. More specifically, denoting by $W(t-)$ a controlled portfolio value at time $t$ (before transactions), we aim to find an investment strategy which maximises
\begin{equation}\label{eq:mu_log_return}
    \liminf_{n\to\infty} \frac{1}{\mathbb{E}[\tau_n]}\mu^\gamma\left[\ln \left(W(\tau_n-)/W(0-)\right) \right],
\end{equation}
where $\tau_n$ is the (random) time of the $n$th portfolio change.
Also, $\mu^\gamma$ is the risk-sensitive optimality functional (also referred to as the entropic utility function) given, for a sufficiently integrable random variable $Z$ and a risk-aversion parameter $\gamma\leq 0$, as
\begin{equation}\label{eq:entropic_utility}
    \mu^{\gamma}[Z]:=\begin{cases}
    \frac{1}{\gamma}\ln \mathbb{E}[\exp\left(\gamma Z\right)], & \gamma< 0,\\
    \mathbb{E}[Z], & \gamma=0.
    \end{cases}
\end{equation}
In this functional, $\gamma=0$ corresponds to the risk-neutral preferences while $\gamma<0$ reflects the risk-averse behaviour of a decision maker. It should be noted that, for $\gamma=0$, maximisation of~\eqref{eq:mu_log_return} corresponds to the maximisation of the expected logarithmic rate of return (per unit time) and could be linked to the celebrated Kelly criterion; see~\cite{MacLeaThiZie2011} for an overview. Also, it can be shown that for $\gamma<0$ close to $0$, one has $\mu^{\gamma}[Z]\approx \mathbb{E}[Z]+\frac{\gamma}{2}\Var[Z]$, which highlights that the maximisation of $\mu^\gamma$ could be linked to the maximisation of the expectation with a penalty term related to the variance of the results; this explains why the risk-sensitive framework is often seen as a time-consistent counterpart of the classical mean-variance approach of~\cite{Mar1952}, see~\cite{BiePli2003} for a detailed discussion of the economic properties of the criterion~\eqref{eq:mu_log_return}. Let us also recall that the entropic utility is essentially the unique family of cash-additive and time-consistent certainty equivalents, see~\cite{KupSch2009}, and that it constitutes an acceptability index in the sense of~\cite{CheMad2009}; we refer to~\cite{FolSch2016} for a general treatment of the underlying risk-measurement framework. 

The market model considered here has two features that jointly determine the mathematical structure of the problem. First, the asset prices evolve in \emph{continuous time}: we assume that the process $\ln S(t)$, $t\geq 0$, has stationary and independent increments, which covers, in particular, the exponential L\'evy dynamics, see~\cite{App2009}. Second, the investor is \emph{not} allowed to trade continuously. Instead, the portfolio can be rebalanced only at the moments $\tau_0=0<\tau_1<\tau_2<\ldots$ of an exogenously given sequence of investment opportunities, with the waiting times $\eta_i:=\tau_i-\tau_{i-1}$ satisfying $\mathbb{P}[\eta_i>0]=1$ and $\mathbb{E}[\eta_i]<\infty$. This reflects a number of practically relevant frictions: illiquidity, limited access to the market, execution delays, or simply the fact that trading decisions are made at discrete (and possibly irregular) inspection times.  
Together with the proportional transaction costs paid at each rebalancing, this makes the problem a long-run impulse-type control problem for the portfolio weight process.

Risk-sensitive portfolio optimisation has been extensively studied in the literature; we refer to~\cite{DacLle2014} and~\cite{Pri2007} for a general overview. In the continuous-time frictionless setting, the seminal contributions are due to~\cite{BiePli1999} and~\cite{FleShe2000}, and further developments, including problems with factors and with jumps, can be found e.g. in~\cite{Nag2003,Nag2006,PitSte2016,BoLiaYu2022}. On the abstract level, the criterion~\eqref{eq:mu_log_return} belongs to the class of ergodic (average cost per unit time) risk-sensitive control problems; see~\cite{BisBor2023} for a recent survey and~\cite{DiMSte1999,DiMSte2007,Jas2007,BorMey2002,GuaRob2012,BauRie2014} for a selection of general results in the Markov decision process framework; see also the monograph~\cite{BauRie2011}. On the technical side, three approaches dominate the literature: the \emph{vanishing discount} approach, see e.g.~\cite{CavCad2009,CavHer2017}; the \emph{span-contraction} approach, see e.g.~\cite{PitSte2016,SheStaObe2013}, which typically relies on a uniform ergodicity or minorisation condition of Doeblin type, cf.~\cite{HaiMat2011}; and \emph{Krein--Rutman}-type or Collatz--Wielandt-type arguments applied to the multiplicative form of the Bellman equation, see~\cite{Bon1962} and~\cite{AraBorKum2016}. Each of these techniques requires some form of ergodic behaviour of the controlled process, an assumption that is restrictive in the transaction cost setting, where the controlled weight process may be pushed towards the boundary of the simplex.

Portfolio problems with proportional transaction costs are, by now, classical, but the combination with the risk-sensitive long-run criterion is considerably less explored. In the continuous-time setting we refer to~\cite{BiePli2000} and~\cite{Ste2011b}, while~\cite{Kor1999} discusses the impulse control formulation of financial problems with fixed and proportional costs. The long-run impulse control framework with random intervention times, which is close in spirit to the present setting, has been studied in~\cite{PalSte2017,JelPitSte2019b,PitSte2019,Ste2022,JelSte2023}; see also~\cite{JelPitSte2019a,JelSte2020} for the related risk-sensitive stopping problems. The closest reference to this paper is, however, the discrete-time study of~\cite{PitSte2023}, which we now discuss in more detail.

In~\cite{PitSte2023}, the authors consider a discrete-time market with i.i.d. log-returns, proportional transaction costs, and no short selling, and they solve the associated risk-sensitive Bellman equation under the minimal integrability condition requiring finite mean and finite entropy of the log-returns. For the risk-seeking case $\gamma>0$, the Bellman equation is solved directly via the vanishing discount approach. For the risk-averse case $\gamma<0$, which is the focus of the present paper, the vanishing discount argument yields only a \emph{recursive} (iterated) family of Bellman equations, and in order to solve a single ergodic Bellman equation the authors additionally invoke a span-contraction argument, which requires an extra ergodicity (mixing) assumption on the returns as well as an auxiliary diversification constraint that is subsequently relaxed. The present paper complements and extends~\cite{PitSte2023} in the following four directions.

\begin{enumerate}[label=(\arabic*),leftmargin=*]
\item \textbf{Continuous time and random intervention moments.} We work with a continuous-time price process with stationary independent increments and allow the portfolio to be rebalanced only at random moments $(\tau_n)$ of investment opportunities. This is a genuine generalisation of the discrete-time framework of~\cite{PitSte2023}, which corresponds to $\eta_i\equiv 1$; in particular, it covers exponential L\'evy dynamics observed at renewal-type or externally driven inspection times. The resulting Bellman equation~\eqref{eq:Bellman_log} is stated at the (subordinated) growth vector $R(\eta_1)$ and its normalisation is expressed per unit of time.
\item \textbf{No ergodicity condition.} We show in Theorem~\ref{th:existence_concave} that the ergodic Bellman equation admits a solution under the increments condition~\eqref{B:increments}, the waiting time condition~\eqref{B:waiting_times}, and the integrability condition~\eqref{A:integrability_log} alone. In particular, no mixing, minorisation, or full-support assumption is required, no auxiliary diversification constraint is introduced, and no recursive scheme is needed: the Bellman equation is solved directly. Moreover, the solution produced is regular, in the sense that its span seminorm and its Lipschitz constant are bounded by explicit constants depending only on the transaction cost rates, and it is \emph{structurally distinguished}: the associated positively homogeneous lift on the cone of capital vectors is concave, equivalently, $e^{\bar{v}^\gamma}$ is concave on the simplex. To the best of our knowledge, this concavity property is new even in the discrete-time i.i.d.\ setting of~\cite{PitSte2023}.
\item \textbf{Uniqueness of the maximiser and the solution to the Bellman equation.} We prove in Theorem~\ref{th:unique_maximiser} that, for a solution with a concave lift, the supremum on the right-hand side of the Bellman equation is attained at \emph{exactly one} point, for every value of the pre-rebalancing weights, and that the resulting map is continuous. Consequently, for a fixed solution to the Bellman equation, the optimal rebalancing is unambiguously defined and the optimal stationary strategy from Theorem~\ref{th:verification} requires no measurable selection argument. This is where the full-support condition~\eqref{A:support} enters; let us stress that, in contrast to~\cite{PitSte2023}, where an ergodicity-type condition is needed for the \emph{existence} of a solution, here the non-degeneracy of the returns is used only for the \emph{uniqueness of the maximiser}. Also, it should be noted that the uniqueness is not a simple conclusion of the concavity of the related Bellman equation. In fact, as shown in Example~\ref{ex:non_concave}, part of the equation responsible for transaction costs need not to be concave, and the additional change-of-geometry argument is needed. Finally, using the full-support and growth conditions~\eqref{A:support} and~\eqref{A:growth_gamma}, in Theorem~\ref{th:uniqueness}, we show that the solution to the Bellman equation is also unique. The argument here uses the observation that, under the growth condition, every asset is present in the optimal portfolio.
\item \textbf{Vanishing risk-aversion asymptotics and discrete time grid approximation.} We show that the optimal value of the risk-sensitive problem converges, as $\gamma\to 0$, to the optimal value of the risk-neutral (Kelly-type) problem, and that the corresponding risk-neutral strategy is asymptotically optimal for the risk-sensitive criterion; see Theorem~\ref{th:gamma_asympt}, Corollary~\ref{cor:gamma_asympt_value}, Theorem~\ref{th:strategy_gamma_asympt}, and Theorem~\ref{th:gamma_asympt_strategy}. This justifies approximating a complex risk-averse problem by a simpler risk-neutral one when the risk-aversion parameter is small. Furthermore, in Theorem~\ref{th:discrete_convergence} we show that the optimal value obtained when the intervention times are restricted to a dyadic time grid converges, as the mesh tends to zero, to the optimal value of the original problem. This is important from the numerical perspective, as it requires simulating the controlled trajectory only at finitely many grid points.
\end{enumerate}

The paper is organised as follows. In Section~\ref{S:preliminaries} we introduce the market model, state the standing assumptions, formulate the associated Bellman equation, and prove the verification theorem which links its solutions to the optimal value and to the optimal strategy of the underlying control problem. Section~\ref{S:Bellman} contains the main results of the paper: in Section~\ref{S:cone} we develop the capital coordinates and study the self-financing cone, in Section~\ref{S:reward} we collect the properties of the entropic utility and of the one-period reward functional, in Section~\ref{S:existence} we prove the existence of a solution with a concave lift, in Section~\ref{S:unique_maximiser_subsec} we show the uniqueness and the continuity of the maximiser, and in Section~\ref{SS:unique_solution} we prove the uniqueness of the solution to the Bellman equation. Section~\ref{S:asymptotics} is devoted to the vanishing risk-aversion asymptotics, while in Section~\ref{S:discrete} we discuss the discrete time grid approximation. Finally, in Section~\ref{S:numerical} we present exemplary setting where all the assumptions are satisfied and illustrate the theoretical results with numerical examples.


\section{Preliminaries}\label{S:preliminaries}

In this section, we give more details on the problem formulation. We start by introducing some notation. By $\mathbb{N}$, $\mathbb{N}_{*}$, and $\mathbb{R}_+$ we denote the sets of non-negative integers, positive integers, and non-negative reals, respectively. Next, we write $\mathbf{0}$ and $\mathbf{1}$ for $d$-dimensional vectors of zeroes and ones, respectively. We write $\mathbb{R}^d_+$ for the non-negative
orthant and set $\mathbb{K}:=\mathbb{R}^d_+\setminus\{\mathbf{0}\}$. Also, for any vectors $x,y\in \mathbb{R}^d$, $d\in \mathbb{N}_*$, by $\langle x, y\rangle \in \mathbb{R}$, $x\cdot y\in \mathbb{R}^d$, and $\Vert x\Vert_1 :=\sum_{i=1}^d|x_i|\in \mathbb{R}_+$ we denote their (standard) scalar product, component-wise product, and $\ell^1$-norm, respectively.  
Next, for a set $A\subset \mathbb{R}^d$, by $\mathcal{B}_b(A)$ and $\mathcal{C}_b(A)$, we denote the families of bounded measurable real-valued functions on $A$ and continuous bounded real-valued functions on $A$, respectively. We equip these spaces with the supremum norm $\Vert f\Vert_A:=\sup_{x\in A}|f(x)|$. Finally, for $f\in \mathcal{B}_b(A)$, we define the span seminorm $\spann_A f:=\sup_{x\in A} f(x)-\inf_{x\in A} f(x)$. In the following, we often omit the subscript $A$ when the domain is clear.

Let $(\Omega, \mathcal{F}, (\mathcal{F}_t)_{t\geq 0},\mathbb{P})$ be a continuous-time filtered probability space. We consider a market with $d\in \mathbb{N}_{*}$ assets and by $S(t):=(S_1(t),\ldots, S_d(t))$ we denote the vector of their prices at time $t\geq 0$; we assume that the prices are always positive. Also, by $r(t_1, t_2):=(\ln \frac{S_1(t_2)}{S_1(t_1)}, \ldots, \ln \frac{S_d(t_2)}{S_d(t_1)})$, and 
\begin{equation}\label{eq:R(t)_fp}
    R(t_1,t_2):=e^{r(t_1,t_2)}:=\left( \frac{S_1(t_2)}{S_1(t_1)}, \ldots,  \frac{S_d(t_2)}{S_d(t_1)}\right), \quad 0\leq t_1\leq t_2<\infty,
\end{equation} 
we denote the corresponding vectors of the logarithmic rate of returns and growth rates for the time interval $[t_1,t_2]$, respectively. In this paper, we assume that the increments of the process $\ln S(t)$, $t\geq 0$, are stationary and independent. Also, for brevity, we set $R(t):=R(0,t)$ and $r(t):=\ln R(t)$, $t\geq 0$.

Next, by $N(t):=(N_1(t), \ldots, N_d(t))$, $t\geq 0$, we denote the number of units of assets held in a portfolio at time $t$ after the rebalancing is executed; we assume that short-selling is not allowed. With this terminology, we define the total portfolio value at time $t$ before and after the rebalancing by
\begin{align}\label{eq:wealth_process}
    W(t-):=\langle N(t-),S(t)\rangle, \quad \text{and} \quad    W(t):=\langle N(t),S(t)\rangle,
\end{align}
respectively. With each transaction, we associate proportional transaction costs. More specifically, we assume that an investor updating the portfolio pays the proportional cost given as
\begin{equation}\label{eq:proportional_cost}
  p(x):=\langle c,[x]^+\rangle+\langle h,[x]^-\rangle=\sum_{i=1}^d \max(c_i x_i, -h_ix_i),  
\end{equation}
where $x=(x_1, \ldots, x_d)\in \mathbb{R}^d$ reflects the changes of capital invested in each of $d$ assets, $[x]^+$ and $[x]^-$ denotes the component-wise positive and negative part of $x$, while $c:=(c_1, \ldots, c_d)$ and $h:=(h_1, \ldots, h_d)$ are the proportional commissions satisfying $c_i, h_i\in [0,1)$ for any $i=1,\ldots, d$. 
Thus, we assume the following self-financing condition
\begin{align}\label{eq:self_financing_fp}
    W(t)=W(t-)-p((N(t)-N(t-))\cdot S(t)), \quad t\geq 0.
\end{align}

In the following, instead of choosing directly the number of units $N$, we will control the fractions of capital invested in each asset. To do this, we define the families of admissible portfolio weights and strictly positive portfolio weights by
\begin{align*}
    \mathcal{S}&:=\{\pi=(\pi_1, \ldots, \pi_d)\in \mathbb{R}^d\colon \langle \pi,\mathbf{1}\rangle =1,\, \pi_i\geq 0, \, i=1, \ldots, d\}\\
    \mathcal{S}_0&:=\{\pi=(\pi_1, \ldots, \pi_d)\in \mathcal{S}\colon \pi_i>0, i=1, \ldots, d\}.
\end{align*}
Also, we define the vectors of asset weights before and after the rebalancing given by
\begin{align}\label{eq:weights_fp}
    \pi(t-):=\frac{N(t-)\cdot S(t)}{W(t-)} \quad \text{and}\quad \pi(t):=\frac{N(t)\cdot S(t)}{W(t)}, 
\end{align}
respectively. Then, by the positive homogeneity of $p$, we get 
\begin{align*}
  p((N(t)-N(t-))\cdot S(t)) & = W(t-)p\left(\frac{N(t)\cdot S(t)}{W(t)}\frac{W(t)}{W(t-)}-\frac{N(t-)\cdot S(t)}{W(t-)}\right) \\
  &= W(t-)p\left(\pi(t)\frac{W(t)}{W(t-)}-\pi(t-)\right).  
\end{align*}
Thus, we note that the self-financing condition~\eqref{eq:self_financing_fp} is equivalent to
\begin{equation}\label{eq:self_financing_weights2}
    W(t)=W(t-)-W(t-)p\left(\pi(t)\frac{W(t)}{W(t-)}-\pi(t-)\right), \quad t\geq 0.
\end{equation}
In the following, we will extensively use the observation that the ratio $W(t)/W(t-)$ could be determined directly from the portfolio weights before and after rebalancing. More specifically, using Lemma 2.1 in~\cite{PitSte2023} we may find a continuous map $s\colon \mathcal{S}\times \mathcal{S}\to [\bar{s},1]$ with some $\bar{s}\in (0,1]$ such that
\begin{equation}\label{eq:wealth_ratio_transactions}
    W(t)/W(t-) = s(\pi(t-),\pi(t)), \quad t \geq 0.
\end{equation}
 Also, for any $0\leq t_1\leq t_2$, assuming that there are no transactions in the time interval $(t_1,t_2]$, we have
\begin{multline*}
\pi(t_2):=\frac{N(t_2)\cdot S(t_2)}{\langle N(t_2),S(t_2)\rangle} = \frac{N(t_1)\cdot S(t_2)}{\langle N(t_1),S(t_2)\rangle} \\
=\frac{\frac{N(t_1)\cdot S(t_1)}{\langle N(t_1),S(t_1)\rangle}\cdot R(t_1,t_2)}{\langle \frac{N(t_1)}{\langle N(t_1),S(t_1)\rangle},R(t_1,t_2)\rangle}  = \frac{\pi(t_1)\cdot R(t_1,t_2)}{\langle \pi(t_1),R(t_1,t_2)\rangle}=G(\pi(t_1),R(t_1,t_2)), 
\end{multline*}
where $G(x,y):=\frac{x\cdot y}{\langle x,y\rangle}$, $x,y\in \mathbb{R}^d$. 

In this paper, we assume that a decision maker can change the portfolio composition only at some externally given random moments of investment opportunities (yet, there is no obligation for such a change). More specifically, we assume that we are given a sequence of independent (and independent of the price process), identically distributed random variables $(\eta_i)_{i=1}^\infty$ with values in $(0,\infty)$. Then, we define recursively
\[
\tau_0:=0, \quad \tau_{n+1}=\tau_n+\eta_{n+1}, \quad n\in \mathbb{N}.
\]
With this notation, the portfolio composition could be changed only at $\tau_n$, $n\in \mathbb{N}$.
In the following, for any $\pi \in \mathcal{S}$, by $\mathcal{A}(\pi)$ we denote the family of admissible (i.e., satisfying the self-financing and the intervention time conditions) portfolio weight strategies $\mathbf\Pi=(\pi(t))$ with $\pi(0-)=\pi$. 

Now, we reformulate the functional~\eqref{eq:mu_log_return}  using the structure defined above. Recalling~\eqref{eq:wealth_ratio_transactions}, for any $\gamma\leq 0$, $\pi \in \mathcal{S}$ and $\Pi \in \mathcal{A}(\pi)$, the functional from~\eqref{eq:mu_log_return} could be identified with
\begin{align}\label{eq:J(pi)_log}
    J^\gamma(\mathbf\Pi,\pi)&:=\liminf_{n\to\infty}\frac{1}{\mathbb{E}[\tau_n]}\mu^\gamma \left[\sum_{i=0}^{n-1} \ln \frac{W(\tau_{i+1}^-)}{W(\tau_i^-)}\right]\nonumber\\
    &=\liminf_{n\to\infty}\frac{1}{\mathbb{E}[\tau_n]}\mu^\gamma \left[\sum_{i=0}^{n-1} \ln \frac{W(\tau_{i})}{W(\tau_i^-)}+\ln \frac{W(\tau_{i+1}^-)}{W(\tau_i)}\right]\nonumber\\
    &=\liminf_{n\to\infty}\frac{1}{\mathbb{E}[\tau_n]}\mu^\gamma \left[\sum_{i=0}^{n-1}\left( \ln s(\pi(\tau_i^-),\pi(\tau_i))+\ln \langle \pi(\tau_i),R(\tau_i,\tau_{i+1})\rangle\right)\right].
\end{align}
In the following, we will maximise this functional over $\mathbf\Pi\in \mathcal{A}(\pi)$.

To obtain a solution to the problem, we assume the following conditions
\begin{enumerate}
\item[(\namedlabel{B:increments}{$\mathcal{A}1$})]  (Increments). The process $\ln S(t)$, $t\in [0,\infty)$, is right-continuous and has stationary independent increments. 
\item[(\namedlabel{B:waiting_times}{$\mathcal{A}2$})]   (Waiting times) For any $i\in \mathbb{N}_*$, we have $\mathbb{P}[\eta_i>0]=1$ and $\mathbb{E}[\eta_i]<\infty$.
\item[(\namedlabel{A:integrability_log}{$\mathcal{A}3$})] (Integrability).  For any $\gamma<0$, we have
\[
\mathbb{E}\left[e^{\gamma  \min_{i=1, \ldots, d}r_i(\eta_1)}\right]<\infty ,\qquad\text{and}\qquad \mathbb{E}\Big[\max_{i=1,\ldots,d}r_i(\eta_1)\Big]<\infty .
\]
\item[(\namedlabel{A:support}{$\mathcal{A}4$})] (Full support). 
For any $\pi\in \mathcal{S}_0$, $t>0$, and any non-empty open set $C\subset \mathcal{S}$ we have $\mathbb{P}[G(\pi,R(t))\in C]>0$.
\end{enumerate}

Let us now provide some comments on these conditions. Assumption~\eqref{B:increments} ensures that the controlled process is Markovian between the interventions. It is satisfied e.g. for the exponentiated L\'evy processes.  Next, Assumption~\eqref{B:waiting_times} is a mild condition stating that there are no immediate investment opportunities and the expected waiting time is finite. It should be noted that we do not impose any additional distributional assumptions on the waiting times and, in particular, we can set $\eta_i\equiv 1$. Next, Assumption~\eqref{A:integrability_log} ensures the integrability of the suitable log-returns. 
 Finally, Assumption~\eqref{A:support} reflects the non-degeneracy of the problem, where the (uncontrolled) weight process could explore the whole state space. It is satisfied e.g. for $R(t)$ following a multivariate log-normal distribution. It should be noted that Assumption~\eqref{A:support} is used only in selected results, most notably Theorem~\ref{th:unique_maximiser}. In particular, the existence of a solution to the underlying Bellman equation can be proved without~\eqref{A:support}; see Theorem~\ref{th:existence_concave} for details.

Before we proceed, we show a technical result which facilitates a step-wise computation of certain expressions related to the optimality functional.

\begin{lemma}\label{lm:telescope}
    For any $\pi\in \mathcal{S}$ and $0\leq t\leq s\leq u<\infty$, we have
    \begin{align}
        \ln \langle \pi, R(t,u)\rangle = \ln \langle \pi, R(t,s)\rangle+\ln \langle G(\pi,R(t,s)),R(s,u)\rangle. 
    \end{align}
\end{lemma}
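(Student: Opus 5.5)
The identity is a purely algebraic consequence of the multiplicative structure of the growth vectors. We therefore plan to prove it directly, with no probabilistic input. The first step is to record that for $t\le s\le u$ we have, componentwise,
\[
R(t,u)=R(t,s)\cdot R(s,u),
\]
since $\frac{S_i(u)}{S_i(t)}=\frac{S_i(s)}{S_i(t)}\frac{S_i(u)}{S_i(s)}$ for each $i=1,\ldots,d$. This holds because the prices are assumed to be positive.

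Next we will expand the second term on the right-hand side using the definition $G(x,y)=\frac{x\cdot y}{\langle x,y\rangle}$. Writing $a:=R(t,s)$ and $b:=R(s,u)$, this gives
\[
\langle G(\pi,a),b\rangle=\frac{\langle \pi\cdot a,b\rangle}{\langle \pi,a\rangle}=\frac{\sum_{i}\pi_i a_i b_i}{\langle \pi,a\rangle}=\frac{\langle \pi, a\cdot b\rangle}{\langle\pi,a\rangle}=\frac{\langle\pi,R(t,u)\rangle}{\langle \pi,R(t,s)\rangle},
\]
where the last equality uses the first step.

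To finish, we multiply both sides by $\langle \pi,R(t,s)\rangle$ and take logarithms. The only point needing care is that all quantities involved are strictly positive, so that $G$ is well defined and the logarithms are finite. This follows because $\pi\in\mathcal{S}$ has non-negative entries summing to one, and all components of $R$ are strictly positive, so $\langle \pi,R(t,s)\rangle>0$ and $\langle\pi,R(t,u)\rangle>0$. There is no real obstacle here; the positivity check is the only step that requires attention.
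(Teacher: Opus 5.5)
Your proposal is correct and follows the paper's proof essentially verbatim: expand $\langle G(\pi,R(t,s)),R(s,u)\rangle$ using the definition of $G$, use the componentwise identity $R(t,s)\cdot R(s,u)=R(t,u)$, and take logarithms. Your explicit check that $\langle\pi,R(t,s)\rangle>0$ and $\langle\pi,R(t,u)\rangle>0$ (from $\pi\in\mathcal{S}$ and positive prices) is a small detail the paper leaves implicit.
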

\begin{proof}
    By a direct computation, we have
    \begin{align*}
        \langle G(\pi,R(t,s)),R(s,u)\rangle = \left\langle \frac{\pi\cdot R(t,s)}{\langle\pi, R(t,s) \rangle} ,R(s,u)\right\rangle = \frac{\langle \pi, R(t,s)\cdot R(s,u)\rangle}{\langle\pi, R(t,s) \rangle}=\frac{\langle \pi, R(t,u)\rangle}{\langle\pi, R(t,s) \rangle}.
    \end{align*}
    Taking the logarithms and rearranging terms, we conclude the proof.
\end{proof}

To solve the problem linked to~\eqref{eq:J(pi)_log}, for any $\gamma \leq 0$, we will show the existence of a pair $\bar{v}^\gamma\in \mathcal{C}_b(\mathcal{S})$ and $\bar\lambda^\gamma\in \mathbb{R}$ such that the following Bellman equation is satisfied
\begin{equation}\label{eq:Bellman_log}
    \bar{v}^\gamma(\pi)=\sup_{\pi'\in \mathcal{S}}\left( \ln s(\pi,\pi')-\bar\lambda^\gamma\mathbb{E}[\eta_1]+\mu^\gamma\left[  \ln\langle\pi',R(\eta_1)\rangle+\bar{v}^\gamma (G(\pi',R(\eta_1)))\right]\right), \quad \pi\in \mathcal{S}.
\end{equation}
Note that, for $\gamma<0$, this equation could be equivalently expressed as
\begin{align}\label{eq:Bellman_exponentiated}
    v^\gamma(\pi)&=\inf_{\pi'\in \mathcal{S}}\left( \gamma\ln s(\pi,\pi')-\lambda^\gamma\mathbb{E}[\eta_1] + \ln \mathbb{E}\left[e^{\gamma\ln\langle\pi',R(\eta_1)\rangle+v^\gamma (G(\pi',R(\eta_1)))}\right]\right), \quad  \pi\in \mathcal{S},
\end{align}
where we set $v^\gamma:=\gamma \bar{v}^\gamma$ and $\lambda^\gamma:=\gamma \bar{\lambda}^\gamma$. For $\gamma=0$, Equation~\eqref{eq:Bellman_log} reads
\begin{align}\label{eq:Bellman_RN}
    \bar{v}^0(\pi)&=\sup_{\pi'\in \mathcal{S}}\left( \ln s(\pi,\pi')-\bar\lambda^0\mathbb{E}[\eta_1] + \mathbb{E}\left[\ln\langle\pi',R(\eta_1)\rangle+\bar{v}^0 (G(\pi',R(\eta_1)))\right]\right), \quad \pi\in \mathcal{S}.
\end{align}

Now, we link a solution to~\eqref{eq:Bellman_log} with the optimal values of the underlying control problems. Also, we show that the strategies given by the maximisers of the right-hand side of the associated Bellman equations are optimal.

\begin{theorem}[Verification theorem]\label{th:verification}
\noindent
Assume~\eqref{B:increments},~\eqref{B:waiting_times},  and~\eqref{A:integrability_log}. Also, let $\gamma\leq 0$ and let $(\bar{v}^\gamma, \bar\lambda^\gamma)\in \mathcal{C}_b(\mathcal{S})\times \mathbb{R}$ be a solution to~\eqref{eq:Bellman_log}. Next, let $\hat{\mathbf\Pi}:=(\hat\pi(t))_{t\geq 0}$ be a strategy given, for any $n\in \mathbb{N}$, by
\begin{align}\label{eq:th:verification:strategy}
    \hat\pi(\tau_n)&:=\argmax_{\pi'\in \mathcal{S}}\left( \ln s(\hat\pi(\tau_n^-),\pi')-\bar\lambda^\gamma\mathbb{E}[\eta_1]+\mu^\gamma\left[   \ln\langle\pi',R(\eta_1)\rangle+\bar{v}^\gamma (G(\pi',R(\eta_1)))\right]\right),\nonumber\\
    \hat\pi(t)&:=G(\hat\pi(\tau_n),R(\tau_n,t)), \quad t\in [\tau_n, \tau_{n+1}).
\end{align}
Then, we get
\[
\sup_{\mathbf\Pi\in \mathcal{A}(\pi)}J^\gamma(\mathbf\Pi,\pi) = J^\gamma(\hat{\mathbf\Pi},\pi)=\bar\lambda^\gamma, \quad \pi\in \mathcal{S}.
\]
\end{theorem}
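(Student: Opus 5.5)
The plan is the standard multiplicative (risk-sensitive) verification argument, iterating the Bellman equation along the random intervention grid and using the bounded, continuous $\bar v^\gamma$ to control boundary terms. I treat $\gamma<0$ in detail; the case $\gamma=0$ is identical with $\mu^0=\mathbb{E}$ and plain telescoping of expectations.

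\textbf{Step 1: measurability and the one-step inequality.} The function
\[
\pi'\mapsto \ln s(\pi,\pi')+\mu^\gamma\bigl[\ln\langle\pi',R(\eta_1)\rangle+\bar v^\gamma(G(\pi',R(\eta_1)))\bigr]
\]
is continuous on the compact set $\mathcal{S}$. This follows from continuity of $s$, $G$ and $\bar v^\gamma$, dominated convergence justified by~\eqref{A:integrability_log} (since $\ln\langle\pi',R\rangle\ge\min_i r_i$), and boundedness of $\bar v^\gamma$. Hence the supremum in~\eqref{eq:Bellman_log} is attained, and a Borel measurable selector exists by a standard measurable selection theorem; this makes $\hat{\mathbf\Pi}$ well defined.

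Next, fix any admissible $\mathbf\Pi\in\mathcal{A}(\pi)$. Write $\pi_i^-:=\pi(\tau_i^-)$, $\pi_i:=\pi(\tau_i)$ and $R_{i+1}:=R(\tau_i,\tau_{i+1})$, so that $\pi_{i+1}^-=G(\pi_i,R_{i+1})$. By~\eqref{B:increments},~\eqref{B:waiting_times} and the independence of $(\eta_i)$ from prices, $R_{i+1}$ is independent of $\mathcal{F}_{\tau_i}$ and has the law of $R(\eta_1)$. This is where I expect the main technical care: I will condition on the $\sigma$-field generated by the waiting times and then use stationarity and independence of increments of $\ln S$ at deterministic times. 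Since $\pi_i$ is $\mathcal{F}_{\tau_i}$-measurable, the exponentiated form~\eqref{eq:Bellman_exponentiated} gives, for $\gamma<0$,
\[
\mathbb{E}\Bigl[e^{\gamma\ln s(\pi_i^-,\pi_i)+\gamma\ln\langle\pi_i,R_{i+1}\rangle+\gamma\bar v^\gamma(\pi_{i+1}^-)}\,\Big|\,\mathcal{F}_{\tau_i}\Bigr]\ \ge\ e^{\gamma\bar v^\gamma(\pi_i^-)+\gamma\bar\lambda^\gamma\mathbb{E}[\eta_1]}.
\]
Equality holds for $\hat{\mathbf\Pi}$. The direction of the inequality is reversed compared with the sup because $\gamma<0$.

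\textbf{Step 2: iterate and pass to the limit.} Define
\[
M_n:=\exp\Bigl(\gamma\sum_{i=0}^{n-1}\bigl(\ln s(\pi_i^-,\pi_i)+\ln\langle\pi_i,R_{i+1}\rangle\bigr)+\gamma\bar v^\gamma(\pi_n^-)-n\gamma\bar\lambda^\gamma\mathbb{E}[\eta_1]\Bigr).
\]
Step 1 shows that $M_n$ is a submartingale for general $\mathbf\Pi$ and a martingale for $\hat{\mathbf\Pi}$. Integrability comes from~\eqref{A:integrability_log} and $s\ge\bar s>0$. Hence $\mathbb{E}[M_n]\ge e^{\gamma\bar v^\gamma(\pi)}$, with equality for $\hat{\mathbf\Pi}$. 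Since $\bar v^\gamma$ is bounded, we can remove it at a cost of $e^{|\gamma|\spann\bar v^\gamma}$. Taking $\frac1\gamma\ln$, which flips the inequality, yields
\[
\mu^\gamma\Bigl[\sum_{i=0}^{n-1}\cdots\Bigr]\le n\bar\lambda^\gamma\mathbb{E}[\eta_1]+2\Vert\bar v^\gamma\Vert,
\]
with the matching lower bound for $\hat{\mathbf\Pi}$. Finally, $\mathbb{E}[\tau_n]=n\mathbb{E}[\eta_1]$ by~\eqref{B:waiting_times}. Dividing by $\mathbb{E}[\tau_n]$ and letting $n\to\infty$ gives $J^\gamma(\mathbf\Pi,\pi)\le\bar\lambda^\gamma=J^\gamma(\hat{\mathbf\Pi},\pi)$, where the liminf is actually a limit for $\hat{\mathbf\Pi}$. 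This proves the claim.
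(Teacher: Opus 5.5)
Your proposal is correct and follows the paper's route: iterate the Bellman equation along the intervention times, absorb the bounded $\bar v^\gamma$ terms into an $n$-independent constant, and divide by $\mathbb{E}[\tau_n]=n\mathbb{E}[\eta_1]$. Your multiplicative (sub)martingale $M_n$ is just that iteration written in the exponentiated form \eqref{eq:Bellman_exponentiated}. You also supply details the paper's outline omits: a measurable selector when the argmax is not unique, the conditioning on the waiting times to get independence of the increments over random intervals, and the explicit $2\Vert\bar v^\gamma\Vert$ error bound that squeezes the $\liminf$ into a limit for $\hat{\mathbf\Pi}$.
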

\begin{proof}
    The argument is standard, so we provide only an outline. Iterating~\eqref{eq:Bellman_log} and using Assumption~\eqref{B:increments} and~\eqref{B:waiting_times} combined with Lemma~\ref{lm:telescope}, for any $\pi\in \mathcal{S}$, $\mathbf\Pi \in \mathcal{A}(\pi)$, and $n\in \mathbb{N}$, $n\geq 1$, we get
    \begin{multline*}
        \bar{v}^\gamma(\pi)+n\bar\lambda^\gamma \mathbb{E}[\eta_1]\\
        \geq \mu^\gamma\bigg[\sum_{i=0}^{n-1}\left( \ln s(\pi(\tau_i^-),\pi(\tau_i))+\ln \langle\pi(\tau_i),R(\tau_i,\tau_{i+1})\rangle\right)\\
        +\bar{v}^\gamma(G(\pi(\tau_{n-1}), R(\tau_{n-1}, \tau_n)))  \bigg].
    \end{multline*}
    Thus, dividing both sides by $n \mathbb{E}[\eta_1]$, using the fact that $\bar{v}^\gamma$ is bounded and $\mathbb{E}[\tau_n]=n\mathbb{E}[\eta_1]$, and letting $n\to\infty$, we obtain
    \[
    \bar\lambda^\gamma\geq J^\gamma(\mathbf\Pi,\pi).
    \]
    Using a similar argument applied to the maximiser strategy given by~\eqref{eq:th:verification:strategy}, we obtain
    \[
    \bar\lambda^\gamma= J^\gamma(\hat{\mathbf\Pi},\pi),
    \]
    which concludes the proof.
\end{proof}

\section{Solution to the Bellman equation}\label{S:Bellman}

In this section, we show the existence of a solution to the Bellman equation~\eqref{eq:Bellman_log} and study the
maximisers on its right-hand side. The argument is based on a fixed-point theorem of Schauder type applied to a suitable
compact convex family of functions on $\mathcal{S}$. In contrast to a Krein--Rutman-type approach or a span-contraction argument, no auxiliary
diversification constraint is needed. Also, the solution obtained is shown to have a concave exponential lift, which is the key
structural property used in Section~\ref{S:unique_maximiser_subsec} to prove that the optimal rebalancing is unique.

Throughout this section we assume~\eqref{B:increments},~\eqref{B:waiting_times}, and~\eqref{A:integrability_log}; the
full-support condition~\eqref{A:support} is used \emph{only} in Section~\ref{S:unique_maximiser_subsec} and is invoked
explicitly there. In particular, neither the existence of a solution nor any of the bounds below depend
on~\eqref{A:support}.


\subsection{Capital coordinates and the self-financing cone}\label{S:cone}

Recalling~\eqref{eq:proportional_cost}, we note that $p$ is convex, positively homogeneous, non-negative, and satisfies
\begin{equation}\label{eq:p_Lipschitz}
    |p(x)-p(x')|\leq \bar{c}\,\Vert x-x'\Vert_1, \quad x,x'\in \mathbb{R}^d,
\end{equation}
where $\bar{c}:=\max_{i=1,\ldots,d}\max(c_i,h_i)\in [0,1)$. 
Let us recall the map $s$ introduced in Section 2 to measure the capital reduction stemming from portfolio change, see Equation~\eqref{eq:wealth_ratio_transactions} for details. Now, we collect the properties of $s$ used in the sequel. In particular, we amend Lemma 2.1 in~\cite{PitSte2023} by a semi-explicit representation of $s$; cf. Equation~\eqref{eq:sclosed}.
Throughout, we set
\[
\bar{s}:=\frac{1-\bar{c}}{1+\bar{c}}\in (0,1].
\]

\begin{lemma}\label{lm:s_properties}
Let $\Bbox:=\prod_{i=1}^{d}[-h_i,c_i]$. The following statements hold:
\begin{enumerate}[label=(\roman*)]
    \item For all $\pi,\pi'\in\mathcal{S}$, we have
    \begin{equation}\label{eq:sclosed}
    s(\pi,\pi')=\min_{a\in\Bbox}\ \frac{1+\langle a,\pi\rangle}{1+\langle a,\pi'\rangle}
    =\min_{a\in\prod_{i=1}^d\{-h_i,c_i\}}\ \frac{1+\langle a,\pi\rangle}{1+\langle a,\pi'\rangle}.
    \end{equation}
    \item For any $\pi,\pi'\in \mathcal{S}$ we have $s(\pi,\pi')\in [\bar{s},1]$.
    \item For any $\pi_1,\pi_2,\pi_1', \pi_2'\in \mathcal{S}$ we have 
    \[
    |s(\pi_1,\pi_1')-s(\pi_2,\pi_2')|\leq
    \frac{\bar{c}}{1-\bar{c}}\left(\Vert \pi_1-\pi_2\Vert_1+\Vert \pi_1'-\pi_2'\Vert_1 \right).
    \]
\end{enumerate}
\end{lemma}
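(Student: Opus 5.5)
The plan is to identify $s(\pi,\pi')$ as the unique root of a scalar equation coming from the self-financing condition~\eqref{eq:self_financing_weights2}, and to rewrite the cost $p$ as a support function of the box $\Bbox$. By~\eqref{eq:self_financing_weights2}, the quantity $s=s(\pi,\pi')$ from~\eqref{eq:wealth_ratio_transactions} (as produced by Lemma 2.1 in~\cite{PitSte2023}) satisfies
\[
s = 1 - p(s\pi'-\pi).
\]
Since $\max(c_ix_i,-h_ix_i)=\max_{a_i\in[-h_i,c_i]}a_ix_i$, we have $p(x)=\max_{a\in\Bbox}\langle a,x\rangle$. The maximum is attained at a vertex, i.e.\ some $a\in\prod_i\{-h_i,c_i\}$, because the objective is linear and separable in the coordinates. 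Hence the equation becomes
\[
s=\min_{a\in\Bbox}\big(1+\langle a,\pi\rangle - s\langle a,\pi'\rangle\big).
\]

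\textbf{Step 1: uniqueness of the root.} For each fixed $a\in\Bbox$, the map $s\mapsto 1+\langle a,\pi\rangle - s\langle a,\pi'\rangle - s$ is affine with slope $-(1+\langle a,\pi'\rangle)\le -(1-\bar c)<0$. The minimum over $a$ of these maps is therefore strictly decreasing, and the equation has exactly one root, which is $s(\pi,\pi')$.

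\textbf{Step 2: the formula~\eqref{eq:sclosed}.} At the root, $s\le 1+\langle a,\pi\rangle - s\langle a,\pi'\rangle$ holds for every $a\in\Bbox$, with equality for some vertex $a$. Since $1+\langle a,\pi'\rangle\ge 1-\bar c>0$, dividing gives
\[
s\le \frac{1+\langle a,\pi\rangle}{1+\langle a,\pi'\rangle}\quad\text{for all } a\in\Bbox,
\]
with equality at a vertex. Both minima in~\eqref{eq:sclosed} are thus equal to $s$.

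\textbf{Step 3: part (ii).} The choice $a=\mathbf 0\in\Bbox$ gives the ratio $1$, so $s\le 1$. For the lower bound, $\langle a,\pi\rangle\ge-\bar c$ and $\langle a,\pi'\rangle\le \bar c$ for $\pi,\pi'\in\mathcal S$, so every ratio is at least $(1-\bar c)/(1+\bar c)=\bar s$.

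\textbf{Step 4: the Lipschitz bound (iii).} This is the only delicate step. Differentiating each ratio $\varphi_a(\pi,\pi'):=\frac{1+\langle a,\pi\rangle}{1+\langle a,\pi'\rangle}$ naively in $\pi'$ gives the constant $(1+\bar c)\bar c/(1-\bar c)^2$, which is too large. Instead, I would evaluate at a minimiser and use that the minimal value is at most $1$. Let $s_i:=s(\pi_i,\pi_i')$ for $i=1,2$, and let $a^*$ be a minimiser for $(\pi_2,\pi_2')$. Then
\[
s_1-s_2\le \varphi_{a^*}(\pi_1,\pi_1')-\varphi_{a^*}(\pi_2,\pi_2')
=\frac{\langle a^*,\pi_1-\pi_2\rangle + s_2\,\langle a^*,\pi_2'-\pi_1'\rangle}{1+\langle a^*,\pi_1'\rangle},
\]
which is a direct algebraic identity using $\varphi_{a^*}(\pi_2,\pi_2')=s_2$. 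Using $s_2\le 1$, $|a^*_i|\le\bar c$, and $1+\langle a^*,\pi_1'\rangle\ge 1-\bar c$, this is bounded by $\frac{\bar c}{1-\bar c}\big(\Vert\pi_1-\pi_2\Vert_1+\Vert\pi_1'-\pi_2'\Vert_1\big)$. Exchanging the roles of the indices gives the same bound for $s_2-s_1$, which proves (iii).
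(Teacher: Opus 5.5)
Your proof is correct. Parts (i) and (ii) follow the paper's route. The paper also writes $p$ as the support function of $\Bbox$, rewrites $w+p(w\pi'-\pi)=1$ as $\max_{a\in\Bbox}\{w(1+\langle a,\pi'\rangle)-\langle a,\pi\rangle\}=1$, and divides by $1+\langle a,\pi'\rangle\geq 1-\bar c$. Your slope argument for uniqueness is the same monotonicity fact. Your derivation of the vertex form is a slightly more self-contained substitute for the paper's appeal to linear-fractional extrema on a polytope: at the root, the inner objective $1+\langle a,\pi-s\pi'\rangle$ is linear in $a$, so the equality is attained at a vertex.

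The genuine difference is in (iii). The paper does not use \eqref{eq:sclosed} there. It subtracts the two implicit equations $w_k+p(w_k\pi_k'-\pi_k)=1$ and splits $w_2\pi_2'-w_1\pi_1'=w_2(\pi_2'-\pi_1')+(w_2-w_1)\pi_1'$. It then applies the $\bar c$-Lipschitz bound on $p$ and absorbs the resulting term $\bar c|w_1-w_2|$ into the left-hand side.

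You instead use the min-of-ratios representation as an envelope bound. You test the minimiser $a^*$ for $(\pi_2,\pi_2')$ against $(\pi_1,\pi_1')$, which gives a one-sided estimate via the identity you wrote; that identity checks out. In both arguments the sharp constant $\frac{\bar c}{1-\bar c}$ comes from the same place: the bound $s\leq 1$ controls the $\pi'$-increment, which is exactly what the naive derivative estimate misses.

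What each route buys:
\begin{itemize}
\item The paper's version is independent of (i) and uses only the Lipschitz property of the cost (with $\bar c<1$).
\item Yours makes (iii) a short corollary of (i) and avoids the absorption step. It relies on the minimum in \eqref{eq:sclosed} being attained, which is automatic here since $\Bbox$ is compact.
\end{itemize}
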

\begin{proof}

(i) Since $\Bbox$ is a box and the maximum over an
interval of a linear function is attained at an endpoint, for any $x\in \mathbb{R}^d$, we have 
\begin{equation}\label{eq:lm:s_properties:1}
    \max_{a\in\Bbox}\langle a,x\rangle=\sum_{i=1}^d\max(c_ix_i,-h_ix_i)=p(x).
\end{equation}
Next, recall that by Lemma 2.1 in~\cite{PitSte2023}, the value $s(\pi,\pi')$ is given as a unique solution to 
\begin{equation}\label{eq:sdef}
    F(s(\pi,\pi'),\pi,\pi')=1
\end{equation}
where $F(w,\pi,\pi') = w+p(w\pi'-\pi)$, $w>0$, $\pi,\pi'\in \mathcal{S}$. Combining this with~\eqref{eq:lm:s_properties:1}, for $w>0$, $\pi,\pi'\in \mathcal{S}$, we obtain
\[
F(w,\pi,\pi')=w+\max_{a\in\Bbox}\big\langle a,w\pi'-\pi\big\rangle
=\max_{a\in\Bbox}\Big\{w\big(1+\langle a,\pi'\rangle\big)-\langle a,\pi\rangle\Big\}.
\]
As $|\langle a,\pi\rangle|,|\langle a,\pi'\rangle|\le\bar c$, the quantity $1+\langle a,\pi'\rangle$ lies in the interval
$[1-\bar c,1+\bar c]\subset(0,\infty)$ for any $a \in \Bbox$, so
\[
F(w,\pi,\pi')\le1\iff w\le\frac{1+\langle a,\pi\rangle}{1+\langle a,\pi'\rangle}\ \ \forall a\in\Bbox .
\]
Also $w\mapsto F(w,\pi,\pi')$ is continuous, strictly increasing,
with $F(0,\pi,\pi')=\langle h,\pi\rangle<1$; hence \eqref{eq:sdef} has a unique root, equal to the middle expression in
\eqref{eq:sclosed}. The vertex form follows because a linear-fractional function attains its extrema over a polytope at a
vertex.

(ii) The numerators in~\eqref{eq:sclosed} are bounded from below by $1-\bar c$, the denominators are bounded from above by $1+\bar c$, so $s(\pi,\pi')\ge\bar s$; taking $a=0\in\Bbox$ gives $s(\pi,\pi')\le1$.

(iii) Write $w_k:=s(\pi_k,\pi'_k)\in (0,1]$, $k=1,2$, and recall from~\eqref{eq:sdef} that $w_k+p(w_k\pi_k'-\pi_k)=1$. Subtracting
these identities and using~\eqref{eq:p_Lipschitz} as well as $\Vert \pi_1'\Vert_1=1$, we get
\begin{align*}
    |w_1-w_2|&=|p(w_2\pi_2'-\pi_2)-p(w_1\pi_1'-\pi_1)|\\
    &\leq \bar{c}\Vert w_2\pi_2'-w_1\pi_1'-(\pi_2-\pi_1)\Vert_1\\
    & = \bar{c}\Vert w_2(\pi_2'-\pi_1')+(w_2-w_1)\pi_1'-(\pi_2-\pi_1)\Vert_1\\
    &\leq
\bar{c}\Vert\pi_2'-\pi_1'\Vert_1 +\bar{c}|w_2-w_1| +\bar{c}\Vert \pi_2-\pi_1\Vert_1,
\end{align*}
and the claim follows by rearranging the terms.
\end{proof}

In the following, we show that the maximiser in~\eqref{eq:Bellman_log} is unique. A natural approach would build on the concavity of the map under the supremum sign; in particular, we would need to show that the map $s$ is concave. However, as we demonstrate in the following example, this is not true.

\begin{example}\label{ex:non_concave}

    Take $d=2$ and $c=h=(0.5,0.5)$, so $p(z)=0.5(|z_1|+|z_2|)$. Parametrising $\pi=(u,1-u)$,
$\pi'=(t,1-t)$, $u,t\in [0,1]$, we conclude that~\eqref{eq:sclosed} collapses to
\[
s((u,1-u),(t,1-t))=\min\left(\frac{u+0.5}{t+0.5},\ \frac{1.5-u}{1.5-t}\right).
\]
In particular, for $\pi=(\tfrac12,\tfrac12)$ and $t\ge\tfrac12$, the first branch is active and
$s\big(\pi,(t,1-t)\big)=\tfrac{2}{2t+1}$, which is \emph{strictly convex} in $t$. Concretely, with
$\pi'^{1}=(\tfrac35,\tfrac25)$, $\pi'^{2}=(1,0)$ and their midpoint $\pi'^{m}=(\tfrac45,\tfrac15)$, we have $s(\pi,\pi'^{1})=\tfrac{10}{11}$, $s(\pi,\pi'^{m})=\tfrac{10}{13}$, $s(\pi,\pi'^{2})=\tfrac23$.
Hence
\[
\tfrac12 s(\pi,\pi'^{1})+\tfrac12 s(\pi,\pi'^{2})=\tfrac{26}{33}\approx0.7879\ >\ \tfrac{10}{13}\approx0.7692=s(\pi,\pi'^{m}),
\]
so concavity fails.
\end{example}

Recall that in~\eqref{eq:J(pi)_log} we introduced the weights-based representation of the problem. In the following, to overcome the issue with the lack of concavity stated in Example~\ref{ex:non_concave}, we also use capital-based representation. More specifically, for $x,y\in \mathbb{R}^d_+$ we define the \emph{cost-to-reach} functional
\begin{equation}\label{eq:Theta}
    \Theta(x,y):=\langle y,\mathbf{1}\rangle+p(y-x),
\end{equation}
so that, in view of~\eqref{eq:self_financing_fp}, the rebalancing from $x$ to $y$ is self-financing if and only if
$\Theta(x,y)=\langle x,\mathbf{1}\rangle$. Accordingly, we introduce the \emph{self-financing frontier}, the \emph{budget set}, and the
\emph{budget cone} given by
\begin{align}\label{eq:budget_sets}
    \Sigma(x)&:=\{y\in \mathbb{R}^d_+\colon \Theta(x,y)=\langle x,\mathbf{1}\rangle\}, \quad
    \mathcal{K}(x):=\{y\in \mathbb{R}^d_+\colon \Theta(x,y)\leq \langle x,\mathbf{1}\rangle\}, \nonumber\\
    \Gamma&:=\{(x,y)\colon x \in \mathbb{R}^d_+,\, y \in \mathcal{K}(x)\},
\end{align}
respectively. The set $\mathcal{K}(x)$ is the \emph{free-disposal relaxation} of $\Sigma(x)$, in which the investor is
allowed to discard capital; it will be shown in Lemma~\ref{lm:reward}(ii) that this relaxation is inactive at the optimum.

Let us relate this notation to the map $s$ analysed in Lemma~\ref{lm:s_properties}. For $\pi,\pi'\in \mathcal{S}$, the
self-financing condition~\eqref{eq:self_financing_weights2} with $w:=W(t)/W(t-)$ reads $w+p(w\pi'-\pi)=1$, that is,
\begin{equation}\label{eq:s_as_root}
    \Theta(\pi, s(\pi,\pi')\pi')=1=\langle \pi,\mathbf{1}\rangle,
\end{equation}
so that $s(\pi,\pi')$ is exactly the unique scalar $w>0$ for which $w\pi'$ lies on the frontier $\Sigma(\pi)$.

Now, we discuss the properties of budget sets.

\begin{lemma}\label{lm:cone}
The following statements hold:
\begin{enumerate}[label=(\roman*)]
    \item The map $\Theta$ is jointly convex and jointly positively homogeneous on $\mathbb{R}^d_+\times \mathbb{R}^d_+$.
    In particular, $\Gamma$ is a closed convex cone, the set $\mathcal{K}(x)$ is convex and compact,
    $\mathcal{K}(\alpha x)=\alpha \mathcal{K}(x)$ for $\alpha>0$, and $y^k\in \mathcal{K}(x^k)$, $k=1,2$, implies
    $ty^1+(1-t)y^2\in \mathcal{K}(tx^1+(1-t)x^2)$ for $t\in [0,1]$.
    \item For any $x,y\in \mathbb{K}$ there exists exactly one $\alpha(x,y)>0$ such that $\alpha(x,y)\,y\in \Sigma(x)$.
    Moreover, the map $\beta\to \Theta(x,\beta y)$ is strictly increasing on $[0,\infty)$, and for any $\beta>0$,
    \[
    \beta y \in \mathcal{K}(x) \iff \beta \leq \alpha(x,y), \qquad \beta y \in \Sigma(x) \iff \beta = \alpha(x,y).
    \]
    \item We have $x\in \Sigma(x)$ and $\bar{s}\langle x,\mathbf{1}\rangle\leq \langle y,\mathbf{1}\rangle\leq \langle
    x,\mathbf{1}\rangle$ for any $y\in \Sigma(x)$.
    \item For any $\pi \in \mathcal{S}$, the map $\iota_\pi\colon \mathcal{S}\to \Sigma(\pi)$ given by
    $\iota_\pi(\pi'):=s(\pi,\pi')\pi'$ is a homeomorphism of $\mathcal{S}$ onto $\Sigma(\pi)$, with the inverse given by
    $\Sigma(\pi)\ni y\to y/\langle y,\mathbf{1}\rangle\in \mathcal{S}$.
\end{enumerate}
\end{lemma}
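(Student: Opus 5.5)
We verify the four items in order; each is elementary, relying on the structure of $p$ from \eqref{eq:proportional_cost} and on Lemma~\ref{lm:s_properties}.

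For (i), note that $(x,y)\mapsto y-x$ is linear, and $p$ is convex and positively homogeneous. Hence $p(y-x)$ is jointly convex and positively homogeneous, and adding the linear term $\langle y,\mathbf{1}\rangle$ keeps both properties. The function $(x,y)\mapsto \Theta(x,y)-\langle x,\mathbf{1}\rangle$ is then convex, continuous and positively homogeneous. Therefore its sublevel set $\{\,\cdot\leq 0\,\}$, intersected with the closed convex cone $\mathbb{R}^d_+\times\mathbb{R}^d_+$, is a closed convex cone; this set is $\Gamma$. Since $\mathcal{K}(x)$ is a slice of $\Gamma$, it is closed and convex. It is bounded because $\Theta(x,y)\geq\langle y,\mathbf{1}\rangle$ (as $p\geq 0$), which gives $\Vert y\Vert_1\leq\langle x,\mathbf{1}\rangle$. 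The scaling identity $\mathcal{K}(\alpha x)=\alpha\mathcal{K}(x)$ and the mixing property $ty^1+(1-t)y^2\in\mathcal{K}(tx^1+(1-t)x^2)$ are just restatements of homogeneity and convexity of $\Gamma$.

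For (ii), fix $x,y\in\mathbb{K}$ and set $\phi(\beta):=\Theta(x,\beta y)$. To show strict monotonicity, take $\beta<\beta'$. Then $\beta' y-x=(\beta y-x)+(\beta'-\beta)y$. Subadditivity of $p$ (convexity plus homogeneity), together with the lower bound $p(-z)\geq -p(z)$-type estimates, should give
\[
p(\beta'y-x)\geq p(\beta y-x)-\bar c(\beta'-\beta)\Vert y\Vert_1 .
\]
This is cleanest via the Lipschitz bound \eqref{eq:p_Lipschitz}. Combining it with the linear part, $\phi(\beta')-\phi(\beta)\geq(1-\bar c)(\beta'-\beta)\langle y,\mathbf{1}\rangle>0$, using $y\in\mathbb{K}$ and $\bar c<1$. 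Next:
\begin{itemize}
\item $\phi$ is continuous;
\item $\phi(0)=p(-x)=\langle h,x\rangle<\langle x,\mathbf{1}\rangle$, since $h_i<1$ and $x\neq\mathbf{0}$;
\item $\phi(\beta)\to\infty$ as $\beta\to\infty$.
\end{itemize}
So there is a unique root $\alpha(x,y)$ of $\phi(\beta)=\langle x,\mathbf{1}\rangle$, and both equivalences follow from strict monotonicity.

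For (iii), $\Theta(x,x)=\langle x,\mathbf{1}\rangle+p(\mathbf{0})$ shows $x\in\Sigma(x)$. For $y\in\Sigma(x)$, the upper bound $\langle y,\mathbf{1}\rangle\leq\langle x,\mathbf{1}\rangle$ follows from $p\geq0$. The lower bound takes a little more care. By \eqref{eq:p_Lipschitz} with $x'=0$ (or directly), we have $p(y-x)\leq\bar c(\langle y,\mathbf{1}\rangle+\langle x,\mathbf{1}\rangle)$. Hence $\langle x,\mathbf{1}\rangle\leq(1+\bar c)\langle y,\mathbf{1}\rangle+\bar c\langle x,\mathbf{1}\rangle$, which rearranges to $\langle y,\mathbf{1}\rangle\geq\bar s\langle x,\mathbf{1}\rangle$.

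For (iv), \eqref{eq:s_as_root} shows $\iota_\pi(\pi')\in\Sigma(\pi)$. The map $\iota_\pi$ is continuous by Lemma~\ref{lm:s_properties}(iii). The candidate inverse $y\mapsto y/\langle y,\mathbf{1}\rangle$ is well defined and continuous on $\Sigma(\pi)$, because (iii) gives $\langle y,\mathbf{1}\rangle\geq\bar s>0$ there. It remains to check the two compositions:
\begin{itemize}
\item $\iota_\pi(\pi')/\langle\iota_\pi(\pi'),\mathbf{1}\rangle=\pi'$ is immediate.
\item Conversely, for $y\in\Sigma(\pi)$, put $\pi'=y/\langle y,\mathbf{1}\rangle$. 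Both $\langle y,\mathbf{1}\rangle\pi'$ and $s(\pi,\pi')\pi'$ lie on $\Sigma(\pi)$, so the uniqueness in (ii), applied with $x=\pi$ and direction $\pi'$, forces $s(\pi,\pi')=\langle y,\mathbf{1}\rangle$. Hence $\iota_\pi(\pi')=y$.
\end{itemize}

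The only step I expect to need care is the strict monotonicity in (ii), i.e.\ getting a quantitative lower bound on the increment of $p$ in the direction $y$. Using \eqref{eq:p_Lipschitz} together with $\bar c<1$ as above should suffice.
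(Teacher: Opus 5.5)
Your proposal is correct and follows the paper's proof essentially step by step: the same linear-plus-convex decomposition of $\Theta(x,y)-\langle x,\mathbf{1}\rangle$ in (i), the Lipschitz bound \eqref{eq:p_Lipschitz} yielding the increment $(1-\bar c)(\beta'-\beta)\langle y,\mathbf{1}\rangle$ in (ii), the same rearrangement in (iii), and the uniqueness from (ii) to identify $\langle y,\mathbf{1}\rangle=s(\pi,\pi')$ in (iv). The only differences are cosmetic. You use the lower bound from (iii) instead of $\Theta(\pi,\mathbf{0})<1$ to see that the inverse map is well defined, and you cite Lemma~\ref{lm:s_properties}(iii) for the continuity of $\iota_\pi$; both are valid.
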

\begin{proof}
(i) On $\mathbb{R}^d_+\times \mathbb{R}^d_+$ we may write
\[
\Theta(x,y)-\langle x,\mathbf{1}\rangle=\langle y,\mathbf{1}\rangle-\langle x,\mathbf{1}\rangle+p(y-x),
\]
which is a linear function of $(x,y)$ plus a convex function precomposed with a linear map, hence convex; it is also
positively homogeneous. Thus $\Gamma$ is the $0$-sublevel set of a convex positively homogeneous function on the convex cone
$\mathbb{R}^d_+\times \mathbb{R}^d_+$, i.e.\ a closed convex cone. Compactness of $\mathcal{K}(x)$ follows from $p\geq 0$,
which forces $\langle y,\mathbf{1}\rangle\leq \langle x,\mathbf{1}\rangle$ for $y \in \mathcal{K}(x)$. The remaining properties follow from homogeneity and compactness of $\Theta$ by a straightforward verification.

(ii) Let $g(\beta):=\Theta(x,\beta y)=\beta\langle y,\mathbf{1}\rangle+p(\beta y-x)$ for $\beta\geq 0$. For
$0\leq \beta_1<\beta_2$, by~\eqref{eq:p_Lipschitz} we get $p(\beta_2y-x)-p(\beta_1y-x)\geq
-\bar{c}(\beta_2-\beta_1)\langle y,\mathbf{1}\rangle$, hence
\[
g(\beta_2)-g(\beta_1)\geq (1-\bar{c})(\beta_2-\beta_1)\langle y,\mathbf{1}\rangle>0.
\]
Thus $g$ is (strictly) increasing and $g(\beta)\to\infty$ as $\beta\to\infty$. Since $g(0)=p(-x)=\langle
h,x\rangle\leq \bar{c}\langle x,\mathbf{1}\rangle<\langle x,\mathbf{1}\rangle$, the continuous strictly increasing map $g$
attains the level $\langle x,\mathbf{1}\rangle$ at exactly one point, which we denote by $\alpha(x,y)>0$. Also,
$g(\beta)\leq \langle x,\mathbf{1}\rangle$ if and only if $\beta\leq \alpha(x,y)$, which gives the stated characterisation.

(iii) We have $\Theta(x,x)=\langle x,\mathbf{1}\rangle$ hence $x\in \Sigma(x)$. Next, for $y\in \Sigma(x)$, from $p\geq 0$ we get $\langle
y,\mathbf{1}\rangle\leq \langle x,\mathbf{1}\rangle$, while~\eqref{eq:p_Lipschitz} and the triangle inequality
give $p(y-x)\leq \bar{c}\Vert y-x\Vert_1\leq \bar{c}(\langle y,\mathbf{1}\rangle+\langle x,\mathbf{1}\rangle)$. Thus
$\langle x,\mathbf{1}\rangle\leq \langle y,\mathbf{1}\rangle+\bar{c}(\langle y,\mathbf{1}\rangle+\langle
x,\mathbf{1}\rangle)$, i.e.\ $\langle y,\mathbf{1}\rangle\geq \bar{s}\langle x,\mathbf{1}\rangle$.

(iv) If $y=\iota_\pi(\pi')$, then $y\in \Sigma(\pi)$ by~\eqref{eq:s_as_root}, and $\langle
y,\mathbf{1}\rangle=s(\pi,\pi')>0$, so that $y/\langle y,\mathbf{1}\rangle=\pi'$. Conversely, if $y\in \Sigma(\pi)$, then
$y\neq \mathbf{0}$, since $\Theta(\pi,\mathbf{0})=\langle h,\pi\rangle<1 = \langle \pi, \mathbf{1}\rangle$; thus, setting $\sigma:=\langle
y,\mathbf{1}\rangle>0$ and $\pi':=y/\sigma\in \mathcal{S}$, we get $\Theta(\pi,\sigma\pi')=1$, so that
$\sigma=s(\pi,\pi')$ by~\eqref{eq:s_as_root} and (ii), i.e.\ $y=\iota_\pi(\pi')$. The continuity of
$\iota_\pi$ and of its inverse follows from Lemma~\ref{lm:s_properties} (i) and (ii).
\end{proof}

\subsection{The entropic utility and the reward functional}\label{S:reward}

We now record the properties of the entropic utility $\mu^\gamma$ used in the sequel: for any $\gamma\leq 0$, $t\in [0,1]$, and sufficiently integrable $Z,Z_1,Z_2$, and $a\in \mathbb{R}$, we have
\begin{align}\label{eq:mu_basic}
    Z_1\leq Z_2 \quad \mathbb{P}\, a.s.&\implies \mu^\gamma[Z_1]\leq \mu^\gamma[Z_2], \nonumber \\ 
    \mu^\gamma[Z+a]&=\mu^\gamma[Z]+a, \nonumber\\
    |\mu^\gamma[Z_1]-\mu^\gamma[Z_2]|&\leq \Vert Z_1-Z_2\Vert,\nonumber\\\
    \mu^\gamma[tZ_1+(1-t)Z_2]&\geq t \mu^\gamma[Z_1] + (1-t) \mu^\gamma[Z_2].
\end{align}
Note that these properties state monotonicity, cash-additivity, Lipschitz property, and concavity of the entropic utility, respectively.
The first three properties follow directly from the definition. For the fourth one, use the H\"{o}lder inequality or Example 4.13 in~\cite{FolSch2016}. Also, note that directly from the properties of expectation we get that if $Z_1\leq Z_2$ a.s. and $\mu^\gamma[Z_1]=\mu^\gamma[Z_2]\in \mathbb{R}$, then $Z_1=Z_2$ a.s.

In the following, we use also an exponentiated version of $\mu^\gamma$.

\begin{lemma}\label{lm:mu_concave}
Let $\gamma\leq 0$. Then, on the family of random variables $Z$ such that $\mu^\gamma[Z]<\infty$, the map $Z\to
    e^{\mu^\gamma[\ln Z]}$ is positively homogeneous and superadditive, i.e.
\begin{equation}\label{eq:lm:mu_concave:1}
        e^{\mu^\gamma[\ln (Z_1+Z_2)]}\geq e^{\mu^\gamma[\ln Z_1]}+e^{\mu^\gamma[\ln Z_2]};
\end{equation}
    in particular, it is concave.
\end{lemma}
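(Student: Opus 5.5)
Write $\Phi(Z):=e^{\mu^\gamma[\ln Z]}$ for positive random variables $Z$. We treat the cases $\gamma=0$ and $\gamma<0$ separately, since $\Phi$ is a weighted geometric mean in the first case and an $L^\gamma$ quasi-norm in the second.

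\emph{Positive homogeneity.} This follows from cash-additivity in \eqref{eq:mu_basic}. For $\alpha>0$ we have $\mu^\gamma[\ln(\alpha Z)]=\mu^\gamma[\ln Z]+\ln\alpha$, and exponentiating gives $\Phi(\alpha Z)=\alpha\Phi(Z)$.

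\emph{Superadditivity, case $\gamma=0$.} Here $\Phi(Z)=e^{\mathbb{E}[\ln Z]}$. If $\Phi(Z_1)$ or $\Phi(Z_2)$ vanishes (i.e.\ $\mathbb{E}\ln Z_k=-\infty$), the inequality follows from monotonicity. Otherwise set $A_k:=\Phi(Z_k)>0$ and $\theta:=A_1/(A_1+A_2)$. Then
\[
\frac{Z_1+Z_2}{A_1+A_2}=\theta\frac{Z_1}{A_1}+(1-\theta)\frac{Z_2}{A_2}.
\]
Concavity of $\ln$ gives
\[
\mathbb{E}\ln\frac{Z_1+Z_2}{A_1+A_2}\ge \theta\,\mathbb{E}\ln\frac{Z_1}{A_1}+(1-\theta)\,\mathbb{E}\ln\frac{Z_2}{A_2}=0,
\]
which is the claim. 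Since the argument only uses the concavity of $\ln$, it carries over to general $\gamma$ once the right normalisation is chosen.

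\emph{Superadditivity, case $\gamma<0$.} Here $\Phi(Z)=\big(\mathbb{E}[Z^\gamma]\big)^{1/\gamma}=\Vert Z\Vert_{\gamma}$, the $L^p$ ``norm'' with exponent $p=\gamma<0$. The claim is then the reverse Minkowski inequality for negative exponents. We prove it by the same normalisation. Take $A_k=\Phi(Z_k)$ and $\theta$ as above, and note that $\mu^\gamma\circ\ln$ is monotone and cash-additive. Since $\ln$ is concave, pointwise
\[
\ln\frac{Z_1+Z_2}{A_1+A_2}\ge \theta\ln\frac{Z_1}{A_1}+(1-\theta)\ln\frac{Z_2}{A_2}.
\]
Applying $\mu^\gamma$ and using its monotonicity and concavity from \eqref{eq:mu_basic}, the right-hand side is bounded below by
\[
\theta\,\mu^\gamma\Big[\ln\tfrac{Z_1}{A_1}\Big]+(1-\theta)\,\mu^\gamma\Big[\ln\tfrac{Z_2}{A_2}\Big]=0,
\]
where the equality uses cash-additivity and the definition of $A_k$. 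Hence $\mu^\gamma[\ln(Z_1+Z_2)]\ge\ln(A_1+A_2)$, which is \eqref{eq:lm:mu_concave:1}.

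\emph{Degenerate cases.} The main obstacle is bookkeeping when some $\Phi(Z_k)=0$, i.e.\ $\mu^\gamma[\ln Z_k]=-\infty$. For $\gamma<0$ this corresponds to $\mathbb{E}[Z_k^\gamma]=\infty$. In that case the inequality reduces to $\Phi(Z_1+Z_2)\ge\Phi(Z_j)$ for the other index $j$, which holds by monotonicity because $Z_1+Z_2\ge Z_j$. We also need the concavity in \eqref{eq:mu_basic} to hold for extended-valued arguments; this follows from Hölder's inequality in the form $\mathbb{E}[e^{\gamma(tX+(1-t)Y)}]\le \mathbb{E}[e^{\gamma X}]^t\,\mathbb{E}[e^{\gamma Y}]^{1-t}$, which remains valid with infinite values.

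\emph{Concavity.} It follows from the two properties proved above:
\[
\Phi(tZ_1+(1-t)Z_2)\ge\Phi(tZ_1)+\Phi((1-t)Z_2)=t\Phi(Z_1)+(1-t)\Phi(Z_2).
\]
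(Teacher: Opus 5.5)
Your proof is correct. For $\gamma=0$ it coincides with the paper's argument: you normalise by $a+b$, write $(Z_1+Z_2)/(a+b)$ as a convex combination of $Z_1/a$ and $Z_2/b$, and use the concavity of $\ln$ (weighted AM--GM).

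For $\gamma<0$ you take a genuinely different route. The paper writes $e^{\mu^\gamma[\ln Z]}=(\mathbb{E}[Z^\gamma])^{1/\gamma}$ and proves the reverse Minkowski inequality in the classical way. It applies the reverse H\"older inequality with conjugate exponents $\gamma$ and $\gamma/(\gamma-1)$ to $Z_kS^{\gamma-1}$, with $S=Z_1+Z_2$, and sums over $k$. You instead reuse the $\gamma=0$ normalisation unchanged, replacing the linearity of $\mathbb{E}$ by the monotonicity, cash-additivity and concavity of $\mu^\gamma$ from \eqref{eq:mu_basic}. Everything stays finite in the non-degenerate case, since $(Z_1+Z_2)^\gamma\le Z_k^\gamma$.

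What your route buys:
\begin{itemize}
\item A single proof for all $\gamma\le 0$.
\item The more general fact that $Z\mapsto e^{\mu[\ln Z]}$ is superadditive for any monotone, cash-additive, concave functional $\mu$.
\item It needs only the ordinary (forward) H\"older inequality $\mathbb{E}[e^{\gamma(tX+(1-t)Y)}]\le(\mathbb{E}[e^{\gamma X}])^t(\mathbb{E}[e^{\gamma Y}])^{1-t}$, which underlies the concavity of $\mu^\gamma$. This is more elementary than reverse H\"older with a negative exponent.
\end{itemize}

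What the paper's route buys: it makes the $L^\gamma$ quasi-norm structure explicit, and for $\gamma<0$ it is self-contained without invoking the concavity of $\mu^\gamma$.

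Your explicit handling of the degenerate cases $e^{\mu^\gamma[\ln Z_k]}=0$ via monotonicity is bookkeeping that the paper leaves implicit.
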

\begin{proof}
For $\gamma<0$ we have $e^{\mu^\gamma[\ln Z]}=(\mathbb{E}[Z^\gamma])^{1/\gamma}$, so the positive homogeneity is clear.
For the superadditivity, set $S:=Z_1+Z_2$ and note that, with the conjugate exponents $\gamma<0$ and
$q:=\gamma/(\gamma-1)\in (0,1)$, the reverse H\"older inequality gives
\[
\mathbb{E}[Z_kS^{\gamma-1}]\geq (\mathbb{E}[Z_k^\gamma])^{1/\gamma}\,(\mathbb{E}[S^{\gamma}])^{(\gamma-1)/\gamma}, \quad k=1,2.
\]
Summing over $k=1,2$ and using $Z_1S^{\gamma-1}+Z_2S^{\gamma-1}=S^\gamma$, we obtain
$(\mathbb{E}[S^\gamma])^{1/\gamma}\geq (\mathbb{E}[Z_1^\gamma])^{1/\gamma}+(\mathbb{E}[Z_2^\gamma])^{1/\gamma}$, which is the
claim. Superadditivity combined with the positive homogeneity gives the concavity.

For $\gamma=0$, we have $e^{\mu^\gamma[\ln Z]}=e^{\mathbb{E}[\ln Z]}$, so the claim is the integral version of Mahler's inequality (see e.g.~\cite[Theorem 11]{HarLitPol1934}). Nevertheless, for completeness, we provide a proof. Let \[
  a:=e^{\mathbb{E}[\ln Z_1]}\in(0,\infty),\qquad
  b:=e^{\mathbb{E}[\ln Z_2]}\in(0,\infty),\qquad
  \lambda:=\frac{a}{a+b}\in(0,1).
\]
Then $1-\lambda=b/(a+b)$, and hence $\lambda/a=(1-\lambda)/b=1/(a+b)$, which yields
the pointwise identity
\begin{equation}\label{eq:convexcomb}
  \frac{Z_1+Z_2}{a+b}
  \;=\;\lambda\,\frac{Z_1}{a}\;+\;(1-\lambda)\,\frac{Z_2}{b}
  \qquad\text{on }\Omega .
\end{equation}
Applying to the right-hand side of \eqref{eq:convexcomb} the weighted
arithmetic--geometric mean inequality
$\lambda x+(1-\lambda)y\geq x^{\lambda}y^{1-\lambda}$, $x,y>0$, we get
\begin{equation}\label{eq:amgm}
  \frac{Z_1+Z_2}{a+b}\;\geq\;
  \left(\frac{Z_1}{a}\right)^{\lambda}\left(\frac{Z_2}{b}\right)^{1-\lambda}
  \qquad\mathbb{P}\text{-a.s.}
\end{equation}
Taking logarithms in \eqref{eq:amgm} and then expectations of both sides, we obtain
\[
  \mathbb{E}\big[\ln(Z_1+Z_2)\big]-\ln(a+b)
  \;\geq\;
  \lambda\big(\mathbb{E}[\ln Z_1]-\ln a\big)
  +(1-\lambda)\big(\mathbb{E}[\ln Z_2]-\ln b\big)
  \;=\;0,
\]
where the last equality follows from $\mathbb{E}[\ln Z_1]=\ln a$ and
$\mathbb{E}[\ln Z_2]=\ln b$. Consequently
$\mathbb{E}[\ln(Z_1+Z_2)]\geq\ln(a+b)$, and exponentiating gives~\eqref{eq:lm:mu_concave:1} for $\gamma=0$.
\end{proof}

Now, we state a technical result which is a version of the Feller property for the underlying controlled process; it also
shows that the right-hand side of~\eqref{eq:Bellman_log} is well defined.

\begin{lemma}\label{lm:Feller}
Let $\gamma\leq 0$ and assume~\eqref{A:integrability_log}. Then, for any $v\in \mathcal{C}_b(\mathcal{S})$, the map
\begin{equation}\label{eq:lm:Feller:1}
    \mathcal{S}\ni \pi'\to \Psi_v(\pi'):= \mu^\gamma\left[\ln \langle \pi',R(\eta_1)\rangle+v(G(\pi',R(\eta_1)))\right]\in \mathbb{R}
\end{equation}
is well defined, bounded, and continuous.
\end{lemma}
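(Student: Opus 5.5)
The plan is to sandwich the random variable inside $\mu^\gamma$ between two integrable bounds that do not depend on $\pi'$, and then to pass to the limit by dominated convergence. Set $Z_{\pi'}:=\ln\langle \pi',R(\eta_1)\rangle+v(G(\pi',R(\eta_1)))$, and write $\underline{r}:=\min_i r_i(\eta_1)$ and $\overline{r}:=\max_i r_i(\eta_1)$. Since $\pi'\in\mathcal{S}$, the quantity $\langle \pi',R(\eta_1)\rangle$ is a convex combination of the $e^{r_i(\eta_1)}$, so
\[
\underline{r}-\Vert v\Vert\leq Z_{\pi'}\leq \overline{r}+\Vert v\Vert,\qquad \pi'\in\mathcal{S}.
\]

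\textbf{Well-definedness and boundedness.} For $\gamma=0$, assumption~\eqref{A:integrability_log} gives $\mathbb{E}[\overline{r}]<\infty$. I also need $\mathbb{E}[\underline{r}^-]<\infty$. This follows from the first part of~\eqref{A:integrability_log} with, say, $\gamma=-1$, because $\underline{r}^-\leq e^{-\underline{r}}$. Hence $\mathbb{E}|Z_{\pi'}|\leq \mathbb{E}[|\underline{r}|+|\overline{r}|]+\Vert v\Vert$, which is finite; the integrability of $\underline{r}^+\leq \overline{r}^+$ and of $\overline{r}^-\leq \underline{r}^-$ follows in the same way. For $\gamma<0$, $e^{\gamma Z_{\pi'}}\leq e^{-\gamma\Vert v\Vert}e^{\gamma\underline{r}}$, which is integrable by~\eqref{A:integrability_log}. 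Jensen's inequality gives $\mu^\gamma[Z]\leq \mathbb{E}[Z]$ for $\gamma<0$, so monotonicity of $\mu^\gamma$ from~\eqref{eq:mu_basic} yields
\[
\mu^\gamma[\underline{r}]-\Vert v\Vert\leq \Psi_v(\pi')\leq \mathbb{E}[\overline{r}]+\Vert v\Vert .
\]
Both bounds are finite and do not depend on $\pi'$, which gives boundedness.

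\textbf{Continuity.} Take $\pi'_n\to\pi'$ in $\mathcal{S}$. For every $\omega$ we have $R(\eta_1)\in(0,\infty)^d$, so $\langle\pi'_n,R(\eta_1)\rangle\to\langle\pi',R(\eta_1)\rangle>0$ and $G(\pi'_n,R(\eta_1))\to G(\pi',R(\eta_1))$ in $\mathcal{S}$. Since $v$ is continuous on $\mathcal{S}$, it follows that $Z_{\pi'_n}\to Z_{\pi'}$ pointwise. For $\gamma=0$, the dominating function is $|\underline{r}|+|\overline{r}|+\Vert v\Vert$, and dominated convergence gives $\mathbb{E}[Z_{\pi'_n}]\to\mathbb{E}[Z_{\pi'}]$. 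For $\gamma<0$, the variables $e^{\gamma Z_{\pi'_n}}$ are dominated by $e^{-\gamma\Vert v\Vert}e^{\gamma\underline{r}}$, so $\mathbb{E}[e^{\gamma Z_{\pi'_n}}]\to\mathbb{E}[e^{\gamma Z_{\pi'}}]$. This limit is strictly positive, so continuity of $\ln$ finishes the argument.

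The only mildly delicate point is the integrability of the negative part of $\underline{r}$ in the risk-neutral case, since~\eqref{A:integrability_log} states it only through an exponential moment. The elementary bound $x^-\leq e^{-x}$ handles this. Everything else is routine domination.
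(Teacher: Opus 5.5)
Your proposal is correct and follows essentially the same route as the paper: the $\pi'$-independent envelopes $\underline{r}-\Vert v\Vert\leq Z_{\pi'}\leq \overline{r}+\Vert v\Vert$, integrability from~\eqref{A:integrability_log}, and dominated convergence for continuity. Your bound $\underline{r}^-\leq e^{-\underline{r}}$ (the exponential moment with $\gamma=-1$) makes explicit the $\gamma=0$ integrability that the paper simply attributes to~\eqref{A:integrability_log}, and your explicit bounds on $\Psi_v$ spell out the boundedness that the paper leaves implicit.
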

\begin{proof}
Fix $\pi'\in \mathcal{S}$. Since $R_i(\eta_1)=e^{r_i(\eta_1)}$ and $\langle \pi', \mathbf{1}\rangle =1$, we get
\[
e^{\min_{i=1,\ldots,d}r_i(\eta_1)} = \langle \pi',e^{\min_{i=1,\ldots,d}r_i(\eta_1)}\rangle\leq  \langle \pi',R(\eta_1)\rangle\leq
e^{\max_{i=1,\ldots,d}r_i(\eta_1)},
\]
so that, writing $Z_{\pi'}:=\ln \langle \pi',R(\eta_1)\rangle+v(G(\pi',R(\eta_1)))$, we obtain
\begin{equation}\label{eq:lm:Feller:2}
    \min_{i=1,\ldots,d}r_i(\eta_1)-\Vert v\Vert\ \leq\ Z_{\pi'}\ \leq\ \max_{i=1,\ldots,d}r_i(\eta_1)+\Vert v\Vert.
\end{equation}
For $\gamma<0$, we get $e^{\gamma Z_{\pi'}}\leq e^{|\gamma|\Vert v\Vert}e^{\gamma
\min_{i=1,\ldots,d}r_i(\eta_1)}$, which is integrable by Assumption~\eqref{A:integrability_log}; consequently
$0<\mathbb{E}[e^{\gamma Z_{\pi'}}]<\infty$ and $\Psi_v(\pi')=\frac{1}{\gamma}\ln \mathbb{E}[e^{\gamma Z_{\pi'}}]$ is a finite
real number. For $\gamma=0$, Assumption~\eqref{A:integrability_log} gives $\mathbb{E}|Z_{\pi'}|<\infty$, so
that $\Psi_v(\pi')=\mathbb{E}[Z_{\pi'}]$ is finite. In both cases, the envelopes in~\eqref{eq:lm:Feller:2} do not depend on
$\pi'$, which, by the dominated convergence theorem combined with the
continuity of $\pi'\to Z_{\pi'}$, shows the continuity of $\Psi_v$.
\end{proof}

Next, for $v\in \mathcal{C}_b(\mathcal{S})$ we define its \emph{positively homogeneous lift} $\mathcal{W}_v$ by
\begin{equation}\label{eq:lift}
    \mathcal{W}_v(x):=\langle x,\mathbf{1}\rangle\, e^{v(x/\langle x,\mathbf{1}\rangle)}, \quad x\in \mathbb{K}, \qquad
    \mathcal{W}_v(\mathbf{0}):=0.
\end{equation}
Note that $\mathcal{W}_v$ is continuous on $\mathbb{R}^d_+$, positively homogeneous, restricts to $e^{v}$ on $\mathcal{S}$,
and satisfies
\begin{equation}\label{eq:lift_bounds}
    e^{-\Vert v\Vert}\langle x,\mathbf{1}\rangle\leq \mathcal{W}_v(x)\leq e^{\Vert v\Vert}\langle x,\mathbf{1}\rangle, \quad x \in \mathbb{R}^d_+.
\end{equation}
The quantity $\mathcal{W}_v(x)$ should be understood as the wealth-scaled exponential of the value function; it is not to be
confused with the wealth process $W$ from~\eqref{eq:wealth_process}. Finally, for a continuous positively homogeneous
$\mathcal{W}\colon \mathbb{R}^d_+\to [0,\infty)$ which is positive on $\mathbb{K}$, we define the \emph{one-period reward}
\begin{equation}\label{eq:reward}
    \Phi_{\mathcal{W}}(y):=e^{\mu^\gamma\left[\ln \mathcal{W}(y\cdot R(\eta_1))\right]}, \quad y \in \mathbb{K}, \quad \Phi_{\mathcal{W}}(\mathbf{0})=0,
\end{equation}
i.e.\ the exponentiated post-growth value of the post-transaction position $y$. Also, note that with the map $\Psi_v$ from Lemma~\ref{lm:Feller}, we have $\Psi_v=\ln \Phi_{\mathcal{W}_v}$, $v\in \mathcal{C}_b(\mathcal{S})$.

Now, we provide several properties of the map $\Phi_{\mathcal{W}}$.

\begin{lemma}\label{lm:reward}
Let $\gamma\leq 0$ and assume~\eqref{A:integrability_log}. Also, let $\mathcal{W}\colon \mathbb{R}^d_+\to [0,\infty)$ be continuous, positively homogeneous, positive on $\mathbb{K}$, and
such that $c_1\langle y,\mathbf{1}\rangle\leq \mathcal{W}(y)\leq c_2\langle y,\mathbf{1}\rangle$, $y\in \mathbb{R}^d_+$, for
some $0<c_1\leq c_2<\infty$. Then, the following statements hold:
\begin{enumerate}[label=(\roman*)]
    \item The map $\Phi_{\mathcal{W}}$ is finite, continuous, and positively homogeneous on $\mathbb{R}^d_+$, with
    $\Phi_{\mathcal{W}}(y)>0$ for $y\neq \mathbf{0}$.
    \item For any $x\in \mathbb{K}$, the supremum $\sup_{y\in \mathcal{K}(x)}\Phi_{\mathcal{W}}(y)$ is attained, it is equal
    to $\sup\limits_{y\in \Sigma(x)}\Phi_{\mathcal{W}}(y)$, and every maximiser over $\mathcal{K}(x)$ belongs to $\Sigma(x)$.
    \item If $\mathcal{W}=\mathcal{W}_v$ for some $v \in \mathcal{C}_b(\mathcal{S})$, then, for any $\pi,\pi'\in
    \mathcal{S}$,
    \begin{equation}\label{eq:objective_match}
        \ln \Phi_{\mathcal{W}_v}(\iota_\pi(\pi'))=\ln s(\pi,\pi')+\Psi_v(\pi').
    \end{equation}
    \item If, in addition, the map $\mathcal{W}$ is concave on $\mathbb{R}^d_+$, then so is $\Phi_{\mathcal{W}}$.
\end{enumerate}
\end{lemma}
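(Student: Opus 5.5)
We prove the four items in order; each reduces to an earlier lemma together with positive homogeneity.

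For (i), fix $y\in\mathbb{K}$. Since $\mathcal{W}$ is positively homogeneous, the two-sided bounds give
\[
c_1\langle y, R(\eta_1)\rangle\le \mathcal{W}(y\cdot R(\eta_1))\le c_2\langle y,R(\eta_1)\rangle .
\]
Write $\langle y,R(\eta_1)\rangle=\langle y,\mathbf{1}\rangle\langle \pi',R(\eta_1)\rangle$ with $\pi'=y/\langle y,\mathbf{1}\rangle$. This yields the envelope
\[
\ln\langle y,\mathbf{1}\rangle+\ln c_1+\min_i r_i(\eta_1)\ \le\ \ln \mathcal{W}(y\cdot R(\eta_1))\ \le\ \ln\langle y,\mathbf{1}\rangle+\ln c_2+\max_i r_i(\eta_1).
\]
Exactly as in Lemma~\ref{lm:Feller}, Assumption~\eqref{A:integrability_log} then makes $\mu^\gamma[\ln\mathcal{W}(y\cdot R(\eta_1))]$ finite. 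Positive homogeneity of $\Phi_{\mathcal{W}}$ follows from cash-additivity in \eqref{eq:mu_basic}, since $\ln\mathcal{W}(\alpha y\cdot R)=\ln\alpha+\ln\mathcal{W}(y\cdot R)$. Positivity on $\mathbb{K}$ is immediate from the exponential.

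Continuity on $\mathbb{K}$ follows by dominated convergence: on a neighbourhood where $\langle y,\mathbf{1}\rangle$ stays in a compact subset of $(0,\infty)$, the envelope is uniform. Continuity at $\mathbf{0}$ comes from the bound $\Phi_{\mathcal{W}}(y)\le c_2\langle y,\mathbf{1}\rangle e^{\mu^\gamma[\max_i r_i(\eta_1)]}$. This bound uses monotonicity of $\mu^\gamma$, and its right-hand side is finite because $\mu^\gamma[Z]\le \mathbb{E}[Z]$ for $\gamma\le 0$ by Jensen.

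For (ii), compactness of $\mathcal{K}(x)$ (Lemma~\ref{lm:cone}(i)) and continuity from (i) give attainment. Since $x\neq\mathbf{0}$, the budget set $\mathcal{K}(x)$ contains nonzero points (for instance $x$ itself), so any maximiser $y^*$ satisfies $\Phi_{\mathcal{W}}(y^*)>0$ and hence $y^*\neq\mathbf{0}$. By Lemma~\ref{lm:cone}(ii), $y^*=\beta y^*$ with $\beta=1\le\alpha(x,y^*)$. If $\alpha(x,y^*)>1$, then $\alpha(x,y^*)y^*\in\mathcal{K}(x)$, and homogeneity gives $\Phi_{\mathcal{W}}(\alpha y^*)=\alpha\Phi_{\mathcal{W}}(y^*)>\Phi_{\mathcal{W}}(y^*)$, a contradiction. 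Therefore $\alpha(x,y^*)=1$, i.e.\ $y^*\in\Sigma(x)$. The equality of the two suprema follows because $\Sigma(x)\subset\mathcal{K}(x)$.

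For (iii), set $y=\iota_\pi(\pi')=s(\pi,\pi')\pi'$. Positive homogeneity of $\Phi_{\mathcal{W}_v}$ gives $\ln\Phi_{\mathcal{W}_v}(y)=\ln s(\pi,\pi')+\ln\Phi_{\mathcal{W}_v}(\pi')$. Next, by the definition \eqref{eq:lift} and since $(\pi'\cdot R)/\langle\pi',R\rangle=G(\pi',R)$, we have $\ln\mathcal{W}_v(\pi'\cdot R)=\ln\langle\pi',R\rangle+v(G(\pi',R))$. Hence $\ln\Phi_{\mathcal{W}_v}(\pi')=\Psi_v(\pi')$.

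For (iv), take $y^1,y^2\in\mathbb{R}^d_+$ and $t\in[0,1]$. Concavity of $\mathcal{W}$ gives, pointwise,
\[
\mathcal{W}\big((ty^1+(1-t)y^2)\cdot R\big)\ge t\mathcal{W}(y^1\cdot R)+(1-t)\mathcal{W}(y^2\cdot R).
\]
Apply the monotone map $Z\mapsto e^{\mu^\gamma[\ln Z]}$ to both sides, then superadditivity and positive homogeneity from Lemma~\ref{lm:mu_concave}. This gives $\Phi_{\mathcal{W}}(ty^1+(1-t)y^2)\ge t\Phi_{\mathcal{W}}(y^1)+(1-t)\Phi_{\mathcal{W}}(y^2)$.

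The only care point is zero entries: when $t\in\{0,1\}$ or some $y^k=\mathbf{0}$, the corresponding term is $0$ and can simply be dropped, using monotonicity alone. We expect the main obstacle to be in (i), namely the dominated-convergence step near the boundary of $\mathbb{K}$ together with continuity at $\mathbf{0}$; the homogeneity and envelope arguments above handle it.
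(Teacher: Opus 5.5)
Your proposal is correct and follows the paper's proof essentially step by step. It uses the envelope plus cash-additivity argument for (i), rescaling a non-frontier point by $\alpha(x,y)>1$ for (ii), homogeneity and cash-additivity for (iii), and pointwise concavity of $\mathcal{W}$ combined with the monotonicity and concavity from Lemma~\ref{lm:mu_concave} for (iv).

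The one difference is that you prove continuity on $\mathbb{K}$ directly by dominated convergence with a locally uniform envelope. This is more careful than the paper, which reads continuity off the sandwich bounds with $c_1,c_2$ and the continuity of $y\mapsto\mu^\gamma[\ln\langle y,R(\eta_1)\rangle]$; those bounds alone do not give continuity of $\Phi_{\mathcal{W}}$.
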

\begin{proof}
(i) The positive homogeneity follows from the positive homogeneity of $\mathcal{W}$ and the cash-additivity
in~\eqref{eq:mu_basic}. Next, from the assumed bounds on
$\mathcal{W}$ and the monotonicity in~\eqref{eq:mu_basic}, we get
\begin{equation}\label{eq:lm:reward:1}
    c_1e^{\mu^\gamma[\ln \langle y,R(\eta_1)\rangle]}\leq \Phi_{\mathcal{W}}(y)\leq c_2e^{\mu^\gamma[\ln \langle
    y,R(\eta_1)\rangle]}, \quad y \in \mathbb{R}^d_+.
\end{equation}
For $y\in \mathbb{K}$, applying Lemma~\ref{lm:Feller} with $v\equiv 0$ to the weight vector $y/\langle
y,\mathbf{1}\rangle$ and using the cash-additivity, we get that $\mu^\gamma[\ln \langle y,R(\eta_1)\rangle]$ is finite and
continuous in $y$; together with~\eqref{eq:lm:reward:1} this gives the finiteness, positivity, and continuity of
$\Phi_{\mathcal{W}}$ on $\mathbb{K}$. Finally, $\Phi_{\mathcal{W}}(y)\leq c_2\langle
y,\mathbf{1}\rangle e^{\mu^\gamma[\max_{i=1,\ldots,d}r_i(\eta_1)]}\to 0$ as $y\to \mathbf{0}$, which shows the continuity at
$\mathbf{0}$; here the exponent is finite, since $\mu^\gamma[Z]\leq \mathbb{E}[Z]$ for $\gamma \leq 0$ and
$\mathbb{E}[\max_{i=1,\ldots,d}r_i(\eta_1)]<\infty$ by Assumption~\eqref{A:integrability_log}.

(ii) The set $\mathcal{K}(x)$ is compact by Lemma~\ref{lm:cone}(i) and $\Phi_{\mathcal{W}}$ is continuous, so the supremum is
attained. Let $y\in \mathcal{K}(x)\setminus \Sigma(x)$ with $y\neq \mathbf{0}$ and let $\alpha:=\alpha(x,y)$ be as in
Lemma~\ref{lm:cone}(ii). By the characterisation stated there, a multiple $\beta y$ belongs to $\mathcal{K}(x)$ if and only
if $\beta\leq \alpha$; applying this with $\beta=1$ and using $y\in \mathcal{K}(x)$, we get $\alpha\geq 1$, while $y\notin
\Sigma(x)$ excludes $\alpha=1$. Thus $\alpha>1$ and, by the positive homogeneity and positivity of $\Phi_{\mathcal{W}}$,
\[
\Phi_{\mathcal{W}}(\alpha y)=\alpha \Phi_{\mathcal{W}}(y)>\Phi_{\mathcal{W}}(y).
\]
Also, by definition of $\alpha$, we have $\alpha y \in \Sigma(x)\subset \mathcal{K}(x)$,
so that $y$ is not a maximiser. Finally, $y=\mathbf{0}$ is dominated by any point of $\Sigma(x)$. This shows that all
maximisers over $\mathcal{K}(x)$ belong to $\Sigma(x)$ and that the two suprema coincide.

(iii) Setting $y:=\iota_\pi(\pi')=s(\pi,\pi')\pi'$, we get $\langle y,R(\eta_1)\rangle=s(\pi,\pi')\langle
\pi',R(\eta_1)\rangle$ and $G(y,R(\eta_1))=G(\pi',R(\eta_1))$, so that
\[
\ln \mathcal{W}_v(y\cdot R(\eta_1))=\ln s(\pi,\pi')+\ln \langle \pi',R(\eta_1)\rangle+v(G(\pi',R(\eta_1))).
\]
Applying $\mu^\gamma$ and using the cash-additivity in~\eqref{eq:mu_basic} to pull out the constant $\ln s(\pi,\pi')$, we
get~\eqref{eq:objective_match}.

(iv) Let $y^1,y^2\in \mathbb{R}^d_+$, $t\in [0,1]$, and $y^t:=ty^1+(1-t)y^2$. Since the map $y\to y\cdot R(\eta_1)$ is
linear, the concavity of $\mathcal{W}$ gives, pointwise,
\[
\mathcal{W}(y^t\cdot R(\eta_1))\geq t\,\mathcal{W}(y^1\cdot R(\eta_1))+(1-t)\,\mathcal{W}(y^2\cdot R(\eta_1)).
\]
The map $Z\to e^{\mu^\gamma[\ln Z]}$ is monotone by~\eqref{eq:mu_basic} and concave by
Lemma~\ref{lm:mu_concave}. Applying it to both sides of the above inequality, we get
$\Phi_{\mathcal{W}}(y^t)\geq t\,\Phi_{\mathcal{W}}(y^1)+(1-t)\,\Phi_{\mathcal{W}}(y^2)$,
which concludes the proof.
\end{proof}

\subsection{Existence of a solution with a concave lift}\label{S:existence}

For $v \in \mathcal{C}_b(\mathcal{S})$ we define the Bellman operator $\mathcal{T}$ by
\begin{equation}\label{eq:T_operator}
    \mathcal{T}v(\pi):=\sup_{\pi'\in \mathcal{S}}\left(\ln s(\pi,\pi')+\Psi_v(\pi')\right), \quad \pi\in \mathcal{S},
\end{equation}
where $\Psi_v$ is given by~\eqref{eq:lm:Feller:1}. With this notation, a pair $(\bar{v}^\gamma,\bar\lambda^\gamma)\in
\mathcal{C}_b(\mathcal{S})\times \mathbb{R}$ is a solution to~\eqref{eq:Bellman_log} if and only if
\begin{equation}\label{eq:fixed_point}
    \mathcal{T}\bar{v}^\gamma=\bar{v}^\gamma+\bar\lambda^\gamma\mathbb{E}[\eta_1].
\end{equation}
Moreover, combining Lemma~\ref{lm:cone}(iv) with Lemma~\ref{lm:reward}(ii),(iii), we get that the lift of
$\mathcal{T}v$ is the maximal reward over the budget set, i.e.\ for any $\pi \in \mathcal{S}$,
\begin{equation}\label{eq:lift_of_T}
    e^{\mathcal{T}v(\pi)}=\sup_{y\in \Sigma(\pi)}\Phi_{\mathcal{W}_v}(y)=\sup_{y\in \mathcal{K}(\pi)}\Phi_{\mathcal{W}_v}(y).
\end{equation}
Indeed, by Lemma~\ref{lm:cone}(iv) we may parametrise $\Sigma(\pi)$ by $\pi'\in \mathcal{S}$ through
$y=\iota_\pi(\pi')$, and then~\eqref{eq:objective_match} identifies the corresponding objectives.

We say that $v \in \mathcal{C}_b(\mathcal{S})$ has a \emph{concave lift} if the map $\mathcal{W}_v$ given
by~\eqref{eq:lift} is concave on $\mathbb{R}^d_+$.
The next two results
are the core of the argument: the first one shows that $\mathcal{T}$ preserves the concavity of the lift, while the second
one shows that $\mathcal{T}$ regularises uniformly in $v$.

\begin{lemma}\label{lm:invariance}
Let $\gamma\leq 0$ and assume~\eqref{A:integrability_log}. If $v\in \mathcal{C}_b(\mathcal{S})$ has a concave lift, then so
does $\mathcal{T}v$.
\end{lemma}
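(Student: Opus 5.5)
The plan is to identify the lift of $\mathcal{T}v$ with a value function on the cone and then use the joint convexity of the budget cone $\Gamma$. By \eqref{eq:lift_of_T}, for $\pi\in\mathcal{S}$ we have $e^{\mathcal{T}v(\pi)}=\sup_{y\in\mathcal{K}(\pi)}\Phi_{\mathcal{W}_v}(y)$. Define
\[
V(x):=\sup_{y\in \mathcal{K}(x)}\Phi_{\mathcal{W}_v}(y),\qquad x\in\mathbb{R}^d_+ .
\]
We first show that $V=\mathcal{W}_{\mathcal{T}v}$. This needs $\mathcal{T}v\in\mathcal{C}_b(\mathcal{S})$ for the lift to be defined; boundedness is immediate from Lemma~\ref{lm:s_properties}(ii) and the boundedness of $\Psi_v$. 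Continuity I would obtain from the joint continuity of $(\pi,\pi')\mapsto \ln s(\pi,\pi')+\Psi_v(\pi')$ (Lemma~\ref{lm:s_properties}(iii) and Lemma~\ref{lm:Feller}) together with compactness of $\mathcal{S}$, since a supremum over a compact set of a jointly continuous function is continuous. For $x\in\mathbb{K}$ write $x=\langle x,\mathbf{1}\rangle\pi$ with $\pi\in\mathcal{S}$. Lemma~\ref{lm:cone}(i) gives $\mathcal{K}(x)=\langle x,\mathbf{1}\rangle\mathcal{K}(\pi)$, and Lemma~\ref{lm:reward}(i) gives positive homogeneity of $\Phi_{\mathcal{W}_v}$. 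Together these yield $V(x)=\langle x,\mathbf{1}\rangle e^{\mathcal{T}v(\pi)}=\mathcal{W}_{\mathcal{T}v}(x)$. At $x=\mathbf{0}$ we have $\mathcal{K}(\mathbf{0})=\{\mathbf{0}\}$, because $\langle y,\mathbf{1}\rangle\le 0$ there, so $V(\mathbf{0})=0$. The hypotheses of Lemma~\ref{lm:reward} hold for $\mathcal{W}=\mathcal{W}_v$ with $c_1=e^{-\Vert v\Vert}$ and $c_2=e^{\Vert v\Vert}$ by \eqref{eq:lift_bounds}.

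Concavity of $V$ then follows by a standard sup-over-convex-graph argument. Since $\mathcal{W}_v$ is concave, Lemma~\ref{lm:reward}(iv) shows that $\Phi_{\mathcal{W}_v}$ is concave on $\mathbb{R}^d_+$. Take $x^1,x^2\in\mathbb{R}^d_+$ and $t\in[0,1]$. By Lemma~\ref{lm:reward}(ii), or trivially when $x^k=\mathbf{0}$, choose maximisers $y^k\in\mathcal{K}(x^k)$ with $V(x^k)=\Phi_{\mathcal{W}_v}(y^k)$. Lemma~\ref{lm:cone}(i) gives $ty^1+(1-t)y^2\in\mathcal{K}(tx^1+(1-t)x^2)$. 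Hence
\[
V(tx^1+(1-t)x^2)\ge \Phi_{\mathcal{W}_v}(ty^1+(1-t)y^2)\ge t\Phi_{\mathcal{W}_v}(y^1)+(1-t)\Phi_{\mathcal{W}_v}(y^2)=tV(x^1)+(1-t)V(x^2).
\]

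The key point, and the main place care is needed, is that the supremum must be taken over the relaxed set $\mathcal{K}(x)$ rather than over the frontier $\Sigma(x)$. The frontier is not convex, and the convex combination $ty^1+(1-t)y^2$ generally lies strictly inside $\mathcal{K}$. Lemma~\ref{lm:reward}(ii) is exactly what guarantees the relaxation does not change the value. The remaining checks (continuity of $\mathcal{T}v$ and behaviour at $\mathbf{0}$) are routine.
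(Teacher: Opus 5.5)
Your proposal is correct and follows essentially the same route as the paper. Both arguments identify $\mathcal{W}_{\mathcal{T}v}(x)$ with $\sup_{y\in\mathcal{K}(x)}\Phi_{\mathcal{W}_v}(y)$ via positive homogeneity and \eqref{eq:lift_of_T}, then combine the concavity of $\Phi_{\mathcal{W}_v}$ (Lemma~\ref{lm:reward}(iv)) with the convexity of the budget cone (Lemma~\ref{lm:cone}(i)); the differences are cosmetic, namely that you use exact maximisers from Lemma~\ref{lm:reward}(ii) where the paper uses $\varepsilon$-optimal points, and you also check the point $\mathbf{0}$ and the continuity of $\mathcal{T}v$ explicitly.
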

\begin{proof}
First, we note that, by~\eqref{eq:lift_bounds} and Lemma~\ref{lm:reward}(iv), the map $\Phi_{\mathcal{W}_v}$ is concave. Next, recall that by Lemma~\ref{lm:cone}(i) and Lemma~\ref{lm:reward}(i), we have $\mathcal{K}(\alpha x)=\alpha \mathcal{K}(x)$ and the map $\Phi_{\mathcal{W}_v}$ is positively homogeneous. Thus, the map $\mathbb{K}\ni x\to \sup_{y\in \mathcal{K}(x)}\Phi_{\mathcal{W}_v}(y) $ is also positively homogeneous. Consequently, using~\eqref{eq:lift} and~\eqref{eq:lift_of_T}, for any $x\in \mathbb{K}$, we obtain
\[
\mathcal{W}_{\mathcal{T}v}(x)=\langle x,\mathbf{1}\rangle\, e^{\mathcal{T}v(x/\langle x, \mathbf{1}\rangle)} = \langle x,\mathbf{1}\rangle\sup_{y \in \mathcal{K}(x/\langle x,\mathbf{1}\rangle)}\Phi_{\mathcal{W}_v}(y)=\sup_{y \in \mathcal{K}(x)}\Phi_{\mathcal{W}_v}(y).
\]
Next, let $x^1,x^2\in \mathbb{K}$, $t\in [0,1]$, and $\varepsilon>0$, choose
$\varepsilon$-optimal $\hat{y}^k\in \mathcal{K}(x^k)$, $k=1,2$, and note that, by Lemma~\ref{lm:cone}(i), we have $t\hat{y}^1+(1-t)\hat{y}^2\in
\mathcal{K}(tx^1+(1-t)x^2)$. Thus, using the concavity of $\Phi_{\mathcal{W}_v}$, we get
\begin{align*}
    \mathcal{W}_{\mathcal{T}v}(tx^1+(1-t)x^2)&\geq \Phi_{\mathcal{W}_v}(t\hat{y}^1+(1-t)\hat{y}^2)\\
    & \geq t\,\Phi_{\mathcal{W}_v}(\hat{y}^1)+(1-t)\,\Phi_{\mathcal{W}_v}(\hat{y}^2) \\
    &\geq t\,\mathcal{W}_{\mathcal{T}v}(x^1)+(1-t)\,\mathcal{W}_{\mathcal{T}v}(x^2)-2\varepsilon.
\end{align*}
Letting $\varepsilon\downarrow 0$, we conclude the proof.
\end{proof}

\begin{lemma}\label{lm:apriori}
Let $\gamma\leq 0$ and assume~\eqref{A:integrability_log}. Then, for any $v\in \mathcal{C}_b(\mathcal{S})$, we have
$\mathcal{T}v\in \mathcal{C}_b(\mathcal{S})$ and, for any $\pi,\pi''\in \mathcal{S}$,
\begin{equation}\label{eq:th:Bellman_existence:unicont}
    |\mathcal{T}v(\pi)-\mathcal{T}v(\pi'')|\leq \sup_{\pi'\in \mathcal{S}}\left|\ln s(\pi,\pi')-\ln s(\pi'',\pi')\right|.
\end{equation}
Also, with $L:=\frac{\bar{c}}{(1-\bar{c})\bar{s}}$ and any $\pi,\pi''\in \mathcal{S}$, we get the bounds
\begin{equation}\label{eq:apriori}
    \spann_{\mathcal{S}}\mathcal{T}v \leq \ln \tfrac{1}{\bar{s}},
    \qquad |\mathcal{T}v(\pi)-\mathcal{T}v(\pi'')|\leq L\Vert \pi-\pi''\Vert_1,
\end{equation}
Moreover, the operator $\mathcal{T}$ is monotone, cash-additive, and non-expansive, i.e.\
$\Vert \mathcal{T}v_1-\mathcal{T}v_2\Vert \leq \Vert v_1-v_2\Vert$ for $v_1,v_2\in \mathcal{C}_b(\mathcal{S})$.
\end{lemma}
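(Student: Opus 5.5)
The proof will work directly from the definition \eqref{eq:T_operator}, in which $\mathcal{T}v(\pi)$ is a supremum over $\pi'$ of $\ln s(\pi,\pi')+\Psi_v(\pi')$. By Lemma~\ref{lm:Feller}, $\Psi_v$ is bounded and continuous on the compact set $\mathcal{S}$. By Lemma~\ref{lm:s_properties}(ii), $\ln s\in[\ln\bar s,0]$. Hence $\mathcal{T}v$ is finite and bounded, with $\inf\Psi_v+\ln\bar s\le\mathcal{T}v\le\sup\Psi_v$. Continuity will come for free once we have the Lipschitz estimate below.

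\textbf{Modulus bound \eqref{eq:th:Bellman_existence:unicont}.} Since both suprema share the term $\Psi_v(\pi')$, we use the elementary inequality $|\sup_{\pi'}f(\pi')-\sup_{\pi'}g(\pi')|\le\sup_{\pi'}|f(\pi')-g(\pi')|$. Applying it with $f=\ln s(\pi,\cdot)+\Psi_v$ and $g=\ln s(\pi'',\cdot)+\Psi_v$ gives the first claim at once.

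\textbf{Bounds \eqref{eq:apriori}.} For the Lipschitz bound, note that $|\ln a-\ln b|\le|a-b|/\bar s$ for $a,b\in[\bar s,1]$. Combining this with Lemma~\ref{lm:s_properties}(iii), with $\pi_1'=\pi_2'$, gives
\[
\sup_{\pi'}\left|\ln s(\pi,\pi')-\ln s(\pi'',\pi')\right|\le\frac{\bar c}{(1-\bar c)\bar s}\Vert\pi-\pi''\Vert_1=L\Vert\pi-\pi''\Vert_1 .
\]
For the span bound, the crude estimate from $\ln s\in[\ln\bar s,0]$ only gives $\mathcal{T}v(\pi)-\mathcal{T}v(\pi'')\le\ln(1/\bar s)+\spann\Psi_v$, which depends on $v$. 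This is the main obstacle, and we avoid it by exploiting the structure of $s$ instead. It suffices to show $s(\pi,\pi')/s(\pi'',\pi')\ge\bar s$ for all $\pi,\pi'',\pi'$. Then $\ln s(\pi,\pi')\ge\ln s(\pi'',\pi')+\ln\bar s$ for every $\pi'$, and taking suprema gives $\mathcal{T}v(\pi)\ge\mathcal{T}v(\pi'')-\ln(1/\bar s)$. Exchanging $\pi$ and $\pi''$ yields the span bound.

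To prove the ratio bound, we use the representation \eqref{eq:sclosed}. Let $a^*\in\Bbox$ attain the minimum defining $s(\pi,\pi')$. Then
\[
s(\pi'',\pi')\le\frac{1+\langle a^*,\pi''\rangle}{1+\langle a^*,\pi'\rangle},
\]
so
\[
\frac{s(\pi,\pi')}{s(\pi'',\pi')}\ge\frac{1+\langle a^*,\pi\rangle}{1+\langle a^*,\pi''\rangle}\ge\frac{1-\bar c}{1+\bar c}=\bar s .
\]
An alternative route is through the cone picture. Any $y\in\Sigma(\pi'')$ satisfies $\bar s y\in\mathcal{K}(\pi)$, which can be checked from the fact that $\bar s\pi''\in\mathcal K(\pi)$ and the convexity and homogeneity in Lemma~\ref{lm:cone}. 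Combined with the homogeneity of $\Phi$ and \eqref{eq:lift_of_T}, this gives the same bound. The ratio argument via \eqref{eq:sclosed} is more direct, so we will use it.

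\textbf{Operator properties.} Monotonicity of $\mathcal{T}$ follows from the monotonicity of $\mu^\gamma$ in \eqref{eq:mu_basic}, since $v_1\le v_2$ implies $\Psi_{v_1}\le\Psi_{v_2}$. Cash-additivity follows from the cash-additivity of $\mu^\gamma$, which gives $\Psi_{v+a}=\Psi_v+a$. Non-expansiveness comes from the Lipschitz property in \eqref{eq:mu_basic}, which gives $\Vert\Psi_{v_1}-\Psi_{v_2}\Vert\le\Vert v_1-v_2\Vert$. Passing this through the supremum with the same elementary inequality as above gives $\Vert\mathcal{T}v_1-\mathcal{T}v_2\Vert\le\Vert v_1-v_2\Vert$.
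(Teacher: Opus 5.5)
Your proof is correct. For the modulus bound, the Lipschitz estimate, boundedness and continuity of $\mathcal{T}v$, and the monotonicity, cash-additivity and non-expansiveness of $\mathcal{T}$, it is the paper's argument.

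The only divergence is the span bound, and there the obstacle you identify does not exist. The paper gets the span bound in one line from \eqref{eq:th:Bellman_existence:unicont}, which you had already proved. For every $\pi'$, both $\ln s(\pi,\pi')$ and $\ln s(\pi'',\pi')$ lie in $[\ln\bar s,0]$ by Lemma~\ref{lm:s_properties}(ii). Hence $|\ln s(\pi,\pi')-\ln s(\pi'',\pi')|\le\ln\frac{1}{\bar s}$ uniformly in $\pi'$, and so $|\mathcal{T}v(\pi)-\mathcal{T}v(\pi'')|\le\ln\frac1{\bar s}$. Your $v$-dependent ``crude estimate'' came from bounding the two suprema separately, instead of using the modulus bound, in which $\Psi_v$ has already cancelled.

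Your ratio inequality $s(\pi,\pi')/s(\pi'',\pi')\ge\bar s$ is true, and your derivation via a minimiser $a^*$ in \eqref{eq:sclosed} is valid. However, it follows immediately from $s(\pi,\pi')\ge\bar s$ and $s(\pi'',\pi')\le1$, so the detour buys nothing. The cone-based alternative you sketch is also fine, but not needed. It would actually require transitivity of $\mathcal{K}$ (if $y\in\mathcal{K}(x)$ and $x\in\mathcal{K}(z)$ then $y\in\mathcal{K}(z)$). That comes from subadditivity of $p$, not just from the convexity and homogeneity statements in Lemma~\ref{lm:cone}.
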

\begin{proof}
Note that~\eqref{eq:th:Bellman_existence:unicont} follows directly from~\eqref{eq:T_operator} and the fact that an absolute difference of suprema is bounded from above by the supremum of absolute differences. Hence, by Lemma~\ref{lm:s_properties}, the map $\mathcal{T}v$ is
finite and continuous, i.e.\ $\mathcal{T}v\in \mathcal{C}_b(\mathcal{S})$.

To get~\eqref{eq:apriori}, note that by Lemma~\ref{lm:s_properties}(ii) we have $\ln s\in [\ln \bar{s},0]$, which
combined with~\eqref{eq:th:Bellman_existence:unicont} gives the first bound. For the second one, using
Lemma~\ref{lm:s_properties}(ii),(iii) and the mean value theorem applied to the logarithm, we get
\[
|\ln s(\pi,\pi')-\ln s(\pi'',\pi')|\leq \tfrac{1}{\bar{s}}|s(\pi,\pi')-s(\pi'',\pi')|\leq L\Vert \pi-\pi''\Vert_1 .
\]
Finally, the monotonicity, cash-additivity, and non-expansiveness of $\mathcal{T}$ follow from the corresponding properties
of $\mu^\gamma$ listed in~\eqref{eq:mu_basic}.
\end{proof}

Now, let us fix some $\pi_0\in \mathcal{S}$ and define
\begin{multline}\label{eq:K_set}
    \mathcal{V}:=\left\{v \in \mathcal{C}_b(\mathcal{S})\colon \mathcal{W}_v \text{ is concave},\ v(\pi_0)=0,\right.\\
    \left.\spann_{\mathcal{S}} v\leq \ln \tfrac{1}{\bar{s}},\ |v(\pi)-v(\pi'')|\leq L\Vert \pi-\pi''\Vert_1\right\}.
\end{multline}

\begin{lemma}\label{lm:V_compact}
The set $\mathcal{V}$ is a non-empty, convex, and compact subset of $\mathcal{C}_b(\mathcal{S})$ equipped with the supremum
norm.
\end{lemma}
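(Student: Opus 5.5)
We verify the three properties separately: non-emptiness, convexity, and compactness.

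\emph{Non-emptiness.} We take $v\equiv 0$. Then $\mathcal{W}_0(x)=\langle x,\mathbf{1}\rangle$ is linear, hence concave on $\mathbb{R}^d_+$. Also $v(\pi_0)=0$, the span is $0\le \ln\frac1{\bar s}$, and the Lipschitz bound holds trivially. So $0\in\mathcal{V}$.

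\emph{Convexity.} The constraints $v(\pi_0)=0$ and the $L$-Lipschitz bound are clearly preserved under convex combinations. The span is a seminorm, so $\spann(tv_1+(1-t)v_2)\le t\spann v_1+(1-t)\spann v_2\le \ln\frac1{\bar s}$.

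The nontrivial point is the concave lift. For $x\in\mathbb{K}$ with $\pi:=x/\langle x,\mathbf{1}\rangle$, we have
\[
\mathcal{W}_{tv_1+(1-t)v_2}(x)=\langle x,\mathbf{1}\rangle e^{tv_1(\pi)}e^{(1-t)v_2(\pi)}=\mathcal{W}_{v_1}(x)^t\,\mathcal{W}_{v_2}(x)^{1-t}.
\]
This identity also holds trivially at $x=\mathbf{0}$. So the new lift is a weighted geometric mean of two nonnegative concave functions. The map $(a,b)\mapsto a^tb^{1-t}$ is concave and nondecreasing in each argument on $\mathbb{R}_+^2$, as a positively homogeneous function with convex superlevel sets, or equivalently by the Cobb--Douglas concavity argument. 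Its composition with concave functions is therefore concave. Hence $tv_1+(1-t)v_2\in\mathcal{V}$.

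\emph{Compactness.} We use Arzel\`a--Ascoli on the compact set $\mathcal{S}$. Every $v\in\mathcal{V}$ satisfies $\Vert v\Vert\le \spann v\le\ln\frac1{\bar s}$, since $v(\pi_0)=0$ forces $\inf v\le 0\le\sup v$. The family is also uniformly $L$-Lipschitz. Hence $\mathcal{V}$ is uniformly bounded and equicontinuous, so relatively compact in $\mathcal{C}_b(\mathcal{S})$.

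It remains to show $\mathcal{V}$ is closed. Let $v_n\to v$ uniformly with $v_n\in\mathcal{V}$. The pointwise conditions (normalisation, span bound, Lipschitz bound) pass to the limit directly. For the lift, we have $\mathcal{W}_{v_n}(x)\to\mathcal{W}_v(x)$ pointwise on $\mathbb{R}^d_+$, since $e^{v_n}\to e^v$ uniformly. A pointwise limit of concave functions is concave, so $\mathcal{W}_v$ is concave and $v\in\mathcal{V}$. A closed relatively compact set is compact.

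The main obstacle is the convexity of the concave-lift condition, since concavity of $\mathcal{W}_v$ is not linear in $v$. The geometric-mean identity above is the key step that resolves it.
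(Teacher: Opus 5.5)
Your proof is correct and follows the paper's argument essentially step by step. You use $v\equiv 0$ for non-emptiness, the identity $\mathcal{W}_{tv_1+(1-t)v_2}=(\mathcal{W}_{v_1})^t(\mathcal{W}_{v_2})^{1-t}$ with the concave nondecreasing weighted geometric mean for convexity, and Arzel\`a--Ascoli plus closedness under uniform limits (concavity of the lift surviving pointwise convergence) for compactness; your added details, such as subadditivity of the span and $\Vert v\Vert\le\spann v$ from $v(\pi_0)=0$, just make explicit what the paper leaves implicit.
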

\begin{proof}
Non-emptiness follows by noting that $v\equiv 0$ belongs to $\mathcal{V}$, as $\mathcal{W}_0(\cdot)=\langle
\cdot,\mathbf{1}\rangle$ is concave.

For the convexity, let $v_1,v_2\in \mathcal{V}$ and $t\in [0,1]$. The normalisation $v\to v(\pi_0)$ is linear, while the span
seminorm and the Lipschitz property constitute convex constraints. For the concavity
of the lift, we note that, directly from~\eqref{eq:lift}, we have
\[
\mathcal{W}_{tv_1+(1-t)v_2}=(\mathcal{W}_{v_1})^{t}(\mathcal{W}_{v_2})^{1-t},
\]
since $\langle x,\mathbf{1}\rangle=\langle x,\mathbf{1}\rangle^{t}\langle x,\mathbf{1}\rangle^{1-t}$. As the map
$(u_1,u_2)\to u_1^tu_2^{1-t}$ is concave and non-decreasing on $[0,\infty)^2$, its composition with the concave maps
$\mathcal{W}_{v_1},\mathcal{W}_{v_2}$ is concave, i.e.\ $tv_1+(1-t)v_2\in \mathcal{V}$.

For the compactness, note that any $v\in \mathcal{V}$ satisfies $\Vert v\Vert\leq \ln \frac{1}{\bar{s}}$, due to
$v(\pi_0)=0$ and the span bound, and is Lipschitz with the constant $L$. Thus, the family $\mathcal{V}$ is uniformly
bounded and equicontinuous, so it is relatively compact by the Arzel\`{a}-Ascoli theorem. Also, $\mathcal{V}$ is closed
under the uniform convergence: this is clear for the three explicit constraints, while the concavity of the lift is
preserved, since the uniform convergence $v_n\to v$ implies the pointwise convergence $\mathcal{W}_{v_n}\to \mathcal{W}_v$
and the pointwise limits of concave maps are concave.
\end{proof}

We are now ready to prove the main existence result of this section.

\begin{theorem}\label{th:existence_concave}
Let $\gamma\leq 0$. Also, assume~\eqref{B:increments},~\eqref{B:waiting_times}, and~\eqref{A:integrability_log}. Then, there
exists a solution $(\bar{v}^\gamma,\bar\lambda^\gamma)\in \mathcal{C}_b(\mathcal{S})\times \mathbb{R}$
to~\eqref{eq:Bellman_log} such that
\begin{enumerate}[label=(\roman*)]
    \item the lift $\mathcal{W}_{\bar{v}^\gamma}$ is concave and positively homogeneous on $\mathbb{R}^d_+$;
    \item after the normalisation $\max_{\mathcal{S}}\bar{v}^\gamma=0$, we have $\bar{s}\langle x,\mathbf{1}\rangle\leq
    \mathcal{W}_{\bar{v}^\gamma}(x)\leq \langle x,\mathbf{1}\rangle$, $x\in \mathbb{R}^d_+$;
    \item $\bar{v}^\gamma$ is Lipschitz with the constant $L$ and $\spann_{\mathcal{S}}\bar{v}^\gamma\leq \ln \frac{1}{\bar{s}}$;
    \item $\bar\lambda^\gamma=\frac{1}{\mathbb{E}[\eta_1]}(\mathcal{T}\bar{v}^\gamma)(\pi_0)$.
\end{enumerate}
\end{theorem}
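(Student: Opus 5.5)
The plan is to apply the Schauder fixed-point theorem to the normalised operator
\[
\tilde{\mathcal{T}}v:=\mathcal{T}v-(\mathcal{T}v)(\pi_0), \qquad v\in\mathcal{V},
\]
on the set $\mathcal{V}$ from~\eqref{eq:K_set}. By Lemma~\ref{lm:V_compact}, $\mathcal{V}$ is a non-empty, convex, compact subset of the Banach space $\mathcal{C}_b(\mathcal{S})$, so the argument reduces to two checks: that $\tilde{\mathcal{T}}$ maps $\mathcal{V}$ into itself, and that it is continuous in the supremum norm.

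For the self-map property, fix $v\in\mathcal{V}$. Lemma~\ref{lm:apriori} gives $\mathcal{T}v\in\mathcal{C}_b(\mathcal{S})$, together with the span bound $\ln\frac{1}{\bar s}$ and the Lipschitz constant $L$. Both bounds are unaffected by subtracting a constant, and $\tilde{\mathcal{T}}v(\pi_0)=0$ holds by construction. For the concave lift, Lemma~\ref{lm:invariance} shows that $\mathcal{W}_{\mathcal{T}v}$ is concave. Directly from~\eqref{eq:lift}, $\mathcal{W}_{v-a}=e^{-a}\mathcal{W}_v$ for any constant $a$, so the lift of $\tilde{\mathcal{T}}v$ is a positive multiple of a concave function and is therefore concave. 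Hence $\tilde{\mathcal{T}}v\in\mathcal{V}$.

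For continuity, the non-expansiveness in Lemma~\ref{lm:apriori} gives
\[
\Vert\tilde{\mathcal{T}}v_1-\tilde{\mathcal{T}}v_2\Vert\le 2\Vert v_1-v_2\Vert,
\]
so $\tilde{\mathcal{T}}$ is Lipschitz. Schauder's theorem then yields $\bar v\in\mathcal{V}$ with $\tilde{\mathcal{T}}\bar v=\bar v$, that is,
\[
\mathcal{T}\bar v=\bar v+(\mathcal{T}\bar v)(\pi_0).
\]
Setting $\bar\lambda^\gamma:=(\mathcal{T}\bar v)(\pi_0)/\mathbb{E}[\eta_1]$, which is well defined because $\mathbb{E}[\eta_1]\in(0,\infty)$ by~\eqref{B:waiting_times}, we obtain~\eqref{eq:fixed_point}, hence a solution to~\eqref{eq:Bellman_log}. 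This gives (iv). Properties (i) and (iii) are inherited from membership in $\mathcal{V}$, with positive homogeneity of the lift holding by definition.

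For (ii), note that $\mathcal{T}$ is cash-additive, so $\bar v-\max_{\mathcal{S}}\bar v$ solves~\eqref{eq:Bellman_log} with the same $\bar\lambda^\gamma$. The statement (iv) refers to the $\pi_0$-normalised representative, while the renormalisation in (ii) changes $\bar v$ only by a constant, which preserves the Bellman equation and properties (i) and (iii). With $\max\bar v=0$ and $\spann\bar v\le\ln\frac{1}{\bar s}$, we have $\bar v\in[\ln\bar s,0]$. Therefore $\mathcal{W}_{\bar v}(x)=\langle x,\mathbf{1}\rangle e^{\bar v(x/\langle x,\mathbf 1\rangle)}$ lies between $\bar s\langle x,\mathbf1\rangle$ and $\langle x,\mathbf1\rangle$; the inequality is trivial at $x=\mathbf 0$. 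The renormalised lift is again a positive multiple of the original one, so it stays concave.

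The main substantive points, namely invariance of the concave lift and compactness, are already supplied by Lemmas~\ref{lm:invariance} and~\ref{lm:V_compact}. The only step needing care is checking that the normalisation by the constant $(\mathcal{T}v)(\pi_0)$ keeps every defining constraint of $\mathcal{V}$ intact, and this comes down to the identity $\mathcal{W}_{v-a}=e^{-a}\mathcal{W}_v$.
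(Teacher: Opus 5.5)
Your proposal is correct and follows the paper's proof essentially step by step: Schauder's theorem on $\mathcal{V}$ for the $\pi_0$-normalised operator, the self-map property via the concave-lift invariance lemma and the a priori span/Lipschitz bounds, continuity with constant $2$ from non-expansiveness, and (ii) from the span bound after shifting to $\max_{\mathcal{S}}\bar v=0$. You also make explicit a point the paper leaves implicit: (iv) holds for the representative with $\bar v(\pi_0)=0$, while (ii) uses a shifted one.
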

\begin{proof}
Let us define the normalised operator $\widehat{\mathcal{T}}v:=\mathcal{T}v-(\mathcal{T}v)(\pi_0)$, $v \in
\mathcal{C}_b(\mathcal{S})$, and recall the set $\mathcal{V}$ given by~\eqref{eq:K_set}.

First, we show that $\widehat{\mathcal{T}}$ maps $\mathcal{V}$ into itself. Indeed, let $v\in \mathcal{V}$. Then, by
Lemma~\ref{lm:invariance}, the lift $\mathcal{W}_{\mathcal{T}v}$ is concave; also, subtracting a constant from
$\mathcal{T}v$ multiplies its lift by a positive constant, so the lift of $\widehat{\mathcal{T}}v$ is concave as well.
Next, the normalisation $\widehat{\mathcal{T}}v(\pi_0)=0$ holds by the construction, while the two remaining constraints
follow from~\eqref{eq:apriori}, as the span seminorm and the Lipschitz constant are not changed by subtracting a constant. 

Second, the operator $\widehat{\mathcal{T}}$ is continuous with respect to the supremum norm. Indeed, using the
non-expansiveness of $\mathcal{T}$ stated in Lemma~\ref{lm:apriori}, for any $v_1,v_2 \in \mathcal{C}_b(\mathcal{S})$ we get
\[
\Vert \widehat{\mathcal{T}}v_1-\widehat{\mathcal{T}}v_2\Vert\leq \Vert \mathcal{T}v_1-\mathcal{T}v_2\Vert+
|(\mathcal{T}v_1-\mathcal{T}v_2)(\pi_0)|\leq 2\Vert v_1-v_2\Vert.
\]

Combining this with Lemma~\ref{lm:V_compact} and using the Schauder fixed point theorem (see, e.g., Theorem 3.2 in~\cite{Bon1962} or Theorem 2.A in~\cite{Zei1986}), we may find $\bar{v}^\gamma\in \mathcal{V}$ such that
$\widehat{\mathcal{T}}\bar{v}^\gamma=\bar{v}^\gamma$, i.e.
\[
\mathcal{T}\bar{v}^\gamma=\bar{v}^\gamma+(\mathcal{T}\bar{v}^\gamma)(\pi_0).
\]
Thus, setting $\bar\lambda^\gamma:=\frac{1}{\mathbb{E}[\eta_1]}(\mathcal{T}\bar{v}^\gamma)(\pi_0)$, which is well defined
since $0<\mathbb{E}[\eta_1]<\infty$ by Assumption~\eqref{B:waiting_times}, we recover~\eqref{eq:fixed_point}, i.e.\ the pair
$(\bar{v}^\gamma,\bar\lambda^\gamma)$ is a solution to~\eqref{eq:Bellman_log}. Properties (i), (iii), and (iv) follow from
$\bar{v}^\gamma\in \mathcal{V}$ and the definition of $\bar\lambda^\gamma$. 
Finally, (ii) follows from~\eqref{eq:lift_bounds} combined with (iii) and the normalisation $\max_{\mathcal{S}}\bar{v}^\gamma=0$.
\end{proof}


\subsection{Uniqueness of the maximiser}\label{S:unique_maximiser_subsec}

In this section, we show that, for a fixed solution to the Bellman equation~\eqref{eq:Bellman_log}, the supremum on the right-hand side is attained
at exactly one point and that the corresponding maximiser depends continuously on the current portfolio position. This
justifies referring to \emph{the} optimal rebalancing in the strategy~\eqref{eq:th:verification:strategy} and shows that no
measurable selection argument is needed in the verification theorem.

Throughout this section we fix a solution $(\bar{v}^\gamma,\bar\lambda^\gamma)\in \mathcal{C}_b(\mathcal{S})\times
\mathbb{R}$ to~\eqref{eq:Bellman_log} such that the lift $\mathcal{W}:=\mathcal{W}_{\bar{v}^\gamma}$ is concave; by
Theorem~\ref{th:existence_concave} at least one such solution exists. In particular, by~\eqref{eq:lift_bounds} there are
constants $0<c_1\leq c_2<\infty$ such that
\begin{equation}\label{eq:W_bounds}
    c_1\langle x,\mathbf{1}\rangle\leq \mathcal{W}(x)\leq c_2\langle x,\mathbf{1}\rangle, \quad x \in \mathbb{R}^d_+.
\end{equation}
We also write $\Phi:=\Phi_{\mathcal{W}}$ and $\Psi:=\ln \Phi$, and, for a fixed $\pi\in \mathcal{S}$, we define the sets of
maximisers in the weight and the capital coordinates by
\begin{align}\label{eq:argmax_sets}
    \widehat\Pi(\pi)&:=\argmax_{\pi'\in \mathcal{S}}\left(\ln s(\pi,\pi')+\Psi_{\bar{v}^\gamma}(\pi')\right), &
    \widehat{X}(\pi)&:=\argmax_{y\in \mathcal{K}(\pi)}\Psi(y),
\end{align}
respectively. Note that, by~\eqref{eq:mu_basic}, the set $\widehat\Pi(\pi)$ coincides with the set of maximisers on the
right-hand side of~\eqref{eq:Bellman_log}, since the two expressions differ by the constant
$-\bar\lambda^\gamma\mathbb{E}[\eta_1]$.

We start by showing that the two sets in~\eqref{eq:argmax_sets} are in a one-to-one correspondence. 

\begin{lemma}\label{lm:argmax_correspondence}
Let $\gamma\leq 0$ and assume~\eqref{A:integrability_log}. Then, for any $\pi\in \mathcal{S}$, the map $\iota_\pi$ from
Lemma~\ref{lm:cone}(iv) restricted to $\widehat\Pi(\pi)$ is a bijection onto $\widehat{X}(\pi)$, with the inverse
given by $y\to y/\langle y,\mathbf{1}\rangle$. In particular, the set $\widehat\Pi(\pi)$ is a singleton if and only if
$\widehat{X}(\pi)$ is a singleton.
\end{lemma}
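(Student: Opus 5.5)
The plan is to transport the optimisation problem from weight coordinates to capital coordinates, using the homeomorphism $\iota_\pi$ from Lemma~\ref{lm:cone}(iv) and the identity~\eqref{eq:objective_match}. Fix $\pi\in\mathcal{S}$ and write $\mathcal{W}=\mathcal{W}_{\bar v^\gamma}$, so $\Phi=\Phi_{\mathcal{W}_{\bar v^\gamma}}$. By Lemma~\ref{lm:reward}(iii), for every $\pi'\in\mathcal{S}$,
\[
\Psi(\iota_\pi(\pi'))=\ln\Phi(\iota_\pi(\pi'))=\ln s(\pi,\pi')+\Psi_{\bar v^\gamma}(\pi').
\]
Hence the objective defining $\widehat\Pi(\pi)$ equals $\Psi\circ\iota_\pi$ on $\mathcal{S}$. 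Since $\iota_\pi$ is a bijection of $\mathcal{S}$ onto $\Sigma(\pi)$, the maximisers of $\Psi\circ\iota_\pi$ over $\mathcal{S}$ are mapped by $\iota_\pi$ bijectively onto the maximisers of $\Psi$ over $\Sigma(\pi)$. The suprema coincide, since $\sup_{\mathcal{S}}\Psi\circ\iota_\pi=\sup_{\Sigma(\pi)}\Psi$.

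Next, I would identify $\argmax_{\Sigma(\pi)}\Psi$ with $\widehat X(\pi)=\argmax_{\mathcal{K}(\pi)}\Psi$ by invoking Lemma~\ref{lm:reward}(ii). This requires that $\Phi$ be positive on $\mathbb{K}$, so that $\ln$ is monotone and the maximisers of $\Psi$ and $\Phi$ coincide; points with $\Phi=0$, only $y=\mathbf 0$, are excluded since they are dominated. By~\eqref{eq:W_bounds}, the hypotheses of Lemma~\ref{lm:reward} hold. Part (ii) then gives $\sup_{\mathcal{K}(\pi)}\Phi=\sup_{\Sigma(\pi)}\Phi$, attained, with every maximiser over $\mathcal{K}(\pi)$ lying in $\Sigma(\pi)$. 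Conversely, a maximiser over $\Sigma(\pi)\subset\mathcal{K}(\pi)$ attains the common supremum, so it is a maximiser over $\mathcal{K}(\pi)$. Thus $\argmax_{\Sigma(\pi)}\Psi=\widehat X(\pi)$, and in particular both sets are non-empty.

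Combining the two steps, $\iota_\pi$ restricted to $\widehat\Pi(\pi)$ is a bijection onto $\widehat X(\pi)$. Its inverse is the restriction of $\iota_\pi^{-1}$, namely $y\mapsto y/\langle y,\mathbf 1\rangle$, by Lemma~\ref{lm:cone}(iv). The singleton equivalence is immediate from bijectivity. I expect no real obstacle. The only point needing care is the second step: confirming that the maximiser sets over $\Sigma(\pi)$ and over $\mathcal{K}(\pi)$ coincide, not merely that the suprema do. Both inclusions are handled above.
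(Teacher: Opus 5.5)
Your proposal is correct and follows essentially the same route as the paper: you transport the objective through $\iota_\pi$ using Lemma~\ref{lm:cone}(iv) and Lemma~\ref{lm:reward}(iii), then identify the maximisers over $\Sigma(\pi)$ with $\widehat X(\pi)$ via Lemma~\ref{lm:reward}(ii). Your explicit check that the two argmax sets coincide, and not merely the suprema, is a detail the paper leaves implicit.
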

\begin{proof}
Let $J_\pi(\pi'):=\ln s(\pi,\pi')+\Psi_{\bar{v}^\gamma}(\pi')$, $\pi'\in \mathcal{S}$. We combine three facts. First, by
Lemma~\ref{lm:cone}(iv), the map $\iota_\pi$ is a bijection from $\mathcal{S}$ onto $\Sigma(\pi)$. Second, by
Lemma~\ref{lm:reward}(iii), for any $\pi'\in \mathcal{S}$ we have
\[
\Psi(\iota_\pi(\pi'))=\ln \Phi_{\mathcal{W}}(\iota_\pi(\pi'))=\ln s(\pi,\pi')+\Psi_{\bar{v}^\gamma}(\pi')=J_\pi(\pi'),
\]
i.e.\ the map $\iota_\pi$ transports the objective $J_\pi$ on $\mathcal{S}$ onto the objective $\Psi$ restricted to
$\Sigma(\pi)$, without any distortion. Third, by Lemma~\ref{lm:reward}(ii), we have
\[
\widehat{X}(\pi)=\argmax_{y\in \mathcal{K}(\pi)}\Psi(y)=\argmax_{y\in \Sigma(\pi)}\Psi(y).
\]
Combining these facts, we get that $\pi'$ maximises $J_\pi$ over $\mathcal{S}$ if and only if $\iota_\pi(\pi')$ maximises
$\Psi$ over $\Sigma(\pi)$, i.e.\ if and only if $\iota_\pi(\pi')\in \widehat{X}(\pi)$. Since $\iota_\pi$ is a bijection from
$\mathcal{S}$ onto $\Sigma(\pi)$, its restriction to $\widehat\Pi(\pi)$ is a bijection onto $\widehat{X}(\pi)$.
\end{proof}

\begin{lemma}\label{lm:argmax_convex}
Let $\gamma\leq 0$ and assume~\eqref{A:integrability_log}. Then, for any $\pi \in \mathcal{S}$, the set
$\widehat{X}(\pi)$ is non-empty, compact, convex, and contained in $\Sigma(\pi)$. Moreover, $\langle
y,\mathbf{1}\rangle\geq \bar{s}$ for any $y \in \widehat{X}(\pi)$.
\end{lemma}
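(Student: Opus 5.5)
The plan is to assemble the statement from the properties of $\Phi=\Phi_{\mathcal{W}}$ and of the budget sets established in Lemma~\ref{lm:cone} and Lemma~\ref{lm:reward}.

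\emph{Non-emptiness, compactness, and location on the frontier.} By~\eqref{eq:W_bounds}, $\mathcal{W}$ satisfies the hypotheses of Lemma~\ref{lm:reward}, so $\Phi$ is continuous on $\mathbb{R}^d_+$ and positive on $\mathbb{K}$. Fix $\pi\in\mathcal{S}$. Then $\mathcal{K}(\pi)$ is compact by Lemma~\ref{lm:cone}(i), and Lemma~\ref{lm:reward}(ii) shows that the supremum of $\Phi$ over $\mathcal{K}(\pi)$ is attained and that every maximiser lies in $\Sigma(\pi)$. In particular the maximum is attained off $\mathbf{0}$: indeed $\pi\in\Sigma(\pi)$ by Lemma~\ref{lm:cone}(iii), and $\Phi(\pi)>0=\Phi(\mathbf{0})$. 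Since $\ln$ is strictly increasing on $(0,\infty)$, the maximisers of $\Psi=\ln\Phi$ over $\mathcal{K}(\pi)$ coincide with those of $\Phi$. Hence $\widehat X(\pi)$ is non-empty and contained in $\Sigma(\pi)$. Moreover, $\widehat X(\pi)=\{y\in\mathcal{K}(\pi)\colon \Phi(y)\geq M\}$ with $M:=\max_{\mathcal{K}(\pi)}\Phi$. This is a closed subset of a compact set, so $\widehat X(\pi)$ is compact.

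\emph{Convexity.} Because $\mathcal{W}$ is concave, Lemma~\ref{lm:reward}(iv) gives that $\Phi$ is concave on $\mathbb{R}^d_+$. The set $\mathcal{K}(\pi)$ is convex by Lemma~\ref{lm:cone}(i). A superlevel set of a concave function intersected with a convex set is convex, so $\widehat X(\pi)$ is convex. Concretely, for $y^1,y^2\in\widehat X(\pi)$ and $t\in[0,1]$, the point $ty^1+(1-t)y^2$ lies in $\mathcal{K}(\pi)$ and satisfies $\Phi(ty^1+(1-t)y^2)\geq tM+(1-t)M=M$.

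\emph{Lower bound on the mass.} Since $\widehat X(\pi)\subset\Sigma(\pi)$, Lemma~\ref{lm:cone}(iii) with $x=\pi$ and $\langle\pi,\mathbf{1}\rangle=1$ yields $\langle y,\mathbf{1}\rangle\geq\bar s$. Alternatively, one can write $y=\iota_\pi(\pi')$ via Lemma~\ref{lm:cone}(iv) and use $s\geq\bar s$ from Lemma~\ref{lm:s_properties}(ii).

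I expect no substantive obstacle here. The step needing the most care is convexity, because $\widehat X(\pi)$ sits inside the frontier $\Sigma(\pi)$, which is not itself convex. The argument avoids this by working over the convex relaxation $\mathcal{K}(\pi)$ and applying concavity of $\Phi$ there. Only afterwards does Lemma~\ref{lm:reward}(ii) place the convex combination back on $\Sigma(\pi)$, since it is itself a maximiser over $\mathcal{K}(\pi)$.
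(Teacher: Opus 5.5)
Your proposal is correct and follows essentially the same route as the paper. Non-emptiness, compactness and the inclusion $\widehat X(\pi)\subset\Sigma(\pi)$ come from Lemma~\ref{lm:reward}(ii). Convexity comes from the concavity of $\Phi$ (Lemma~\ref{lm:reward}(iv)) on the convex relaxation $\mathcal{K}(\pi)$, and the bound $\langle y,\mathbf{1}\rangle\geq\bar s$ comes from Lemma~\ref{lm:cone}(iii). You additionally spell out the closed-superlevel-set argument for compactness and the harmless role of $\Phi(\mathbf{0})=0$ when passing to $\ln\Phi$, which the paper leaves implicit.
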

\begin{proof}
Non-emptiness, compactness, and the inclusion $\widehat{X}(\pi)\subset \Sigma(\pi)$ follow from
Lemma~\ref{lm:reward}(ii), while the bound $\langle y,\mathbf{1}\rangle\geq \bar{s}$ is a consequence of
Lemma~\ref{lm:cone}(iii). For the convexity, we note that $\Phi=\Phi_{\mathcal{W}}$ is concave, by
Lemma~\ref{lm:reward}(iv), and $\mathcal{K}(\pi)$ is convex, by Lemma~\ref{lm:cone}(i). Thus, the set of maximisers of the
concave map $\Phi$ over the convex set $\mathcal{K}(\pi)$ is convex, and so is
$\widehat{X}(\pi)=\argmax_{\mathcal{K}(\pi)}\ln \Phi$, since $\ln$ is strictly increasing.
\end{proof}

The following result is the key step of the argument: any two optimal rebalancings lead to the same wealth in (almost) every
state of the world. Let us stress that no assumption on the support of the growth rates is required here.

\begin{proposition}\label{pr:same_wealth}
Let $\gamma\leq 0$ and assume~\eqref{A:integrability_log}. Then, for any $\pi \in \mathcal{S}$ and $y^1,y^2\in
\widehat{X}(\pi)$, we have
\[
\mathcal{W}(y^1\cdot R(\eta_1))=\mathcal{W}(y^2\cdot R(\eta_1)), \quad \mathbb{P}\text{-a.s.}
\]
\end{proposition}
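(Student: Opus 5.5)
The plan is to use the equality cases in the two concavity steps behind Lemma~\ref{lm:reward}(iv), applied at the midpoint of two maximisers. Fix $\pi\in\mathcal{S}$ and $y^1,y^2\in\widehat X(\pi)$. By Lemma~\ref{lm:argmax_convex} the midpoint $y^m:=\tfrac12(y^1+y^2)$ also lies in $\widehat X(\pi)$, so
\[
\Phi(y^1)=\Phi(y^2)=\Phi(y^m)=:M>0 .
\]
Set $Z_k:=\mathcal{W}(y^k\cdot R(\eta_1))$ for $k=1,2$ and $Z_m:=\mathcal{W}(y^m\cdot R(\eta_1))$. Each of these is a.s.\ strictly positive by~\eqref{eq:W_bounds}, since $y^k\cdot R(\eta_1)\in\mathbb{K}$. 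Concavity of $\mathcal{W}$ gives pointwise
\[
Z_m\geq \tfrac12(Z_1+Z_2).
\]
Applying the monotone, positively homogeneous and superadditive map $Z\mapsto e^{\mu^\gamma[\ln Z]}$ (Lemma~\ref{lm:mu_concave} and~\eqref{eq:mu_basic}), we obtain the chain
\[
M=\Phi(y^m)\geq e^{\mu^\gamma[\ln \frac12(Z_1+Z_2)]}\geq \tfrac12 e^{\mu^\gamma[\ln Z_1]}+\tfrac12 e^{\mu^\gamma[\ln Z_2]}=M .
\]
So both inequalities are equalities, and we want to extract $Z_1=Z_2$ a.s.\ from this.

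\textbf{First equality.} We have $\ln Z_m\ge\ln\frac12(Z_1+Z_2)$ a.s., and $\mu^\gamma$ of both sides agree and are finite. By the remark after~\eqref{eq:mu_basic}, this forces $Z_m=\frac12(Z_1+Z_2)$ a.s. This step is not needed for the conclusion, but it is harmless.

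\textbf{Second equality, the main step.} We need the equality case of the superadditivity in Lemma~\ref{lm:mu_concave}. It is cleaner to redo that proof and track where equality can hold than to quote it.

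For $\gamma<0$, write $S=Z_1+Z_2$. Equality in the reverse H\"older inequality
\[
\mathbb{E}[Z_kS^{\gamma-1}]\geq \|Z_k\|_\gamma\,\|S\|_\gamma^{\gamma-1},\quad k=1,2,
\]
requires $Z_k^\gamma$ and $S^\gamma$ to be proportional a.s., i.e.\ $Z_k=\kappa_k S$ a.s.\ with constants $\kappa_k>0$. This gives $Z_1/Z_2=\kappa_1/\kappa_2$ a.s. Since $\mu^\gamma[\ln Z_1]=\mu^\gamma[\ln Z_2]=\ln M$ and $\mu^\gamma$ is cash-additive, the constant ratio must equal $1$.

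For $\gamma=0$ we follow the AM--GM proof. Since $a=b=M$, we have $\lambda=\frac12$. Equality in expectation of the a.s.\ inequality~\eqref{eq:amgm} (after taking logarithms), combined with the equality-case remark above, gives equality a.s. in the weighted AM--GM. That holds only when $Z_1/a=Z_2/b$, i.e.\ $Z_1=Z_2$ a.s.

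\textbf{Main obstacle.} The delicate part is the equality case of the reverse H\"older inequality with exponents $\gamma<0$ and $q\in(0,1)$. I would derive it by rewriting it as the usual H\"older inequality with exponents $1/q$ and $1/(1-q)$, applied to suitable powers: $\mathbb{E}[Z_k^\gamma]$ is expressed through $(Z_kS^{\gamma-1})^{q}$ times $S^{\gamma(1-q)}$. The standard equality characterisation for that H\"older inequality then gives the required proportionality. All quantities involved are finite and positive by Lemma~\ref{lm:Feller} and~\eqref{eq:W_bounds}, so the characterisation applies. No support assumption is used anywhere, as the statement claims.
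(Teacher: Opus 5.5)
Your proof is correct. For $\gamma<0$, however, you get the equality case from a heavier tool than the paper uses.

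You work with the arithmetic-mean bound $Z_m\ge\tfrac12(Z_1+Z_2)$, so you need the equality case of the superadditivity in Lemma~\ref{lm:mu_concave}, i.e.\ of reverse H\"older. The paper instead weakens the pointwise bound one step further, to $Z_m\ge\tfrac12(Z_1+Z_2)\ge\sqrt{Z_1Z_2}$, i.e.\ $\ln Z_m\ge\tfrac12(\ln Z_1+\ln Z_2)$. It then uses the concavity of $\mu^\gamma$ from \eqref{eq:mu_basic} only as an inequality: $\ln M=\tfrac12\mu^\gamma[\ln Z_1]+\tfrac12\mu^\gamma[\ln Z_2]\le\mu^\gamma[\tfrac12(\ln Z_1+\ln Z_2)]\le\mu^\gamma[\ln Z_m]=\ln M$.

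The only equality case it needs is the strict-monotonicity remark after \eqref{eq:mu_basic}, which gives $Z_m=\sqrt{Z_1Z_2}$ a.s. Then both pointwise inequalities in the chain are equalities, and the two-point AM--GM equality case gives $Z_1=Z_2$ a.s. This treats $\gamma<0$ and $\gamma=0$ at once and needs no equality characterisation of H\"older. Your $\gamma=0$ branch (with $a=b=M$, $\lambda=\tfrac12$) is essentially this argument. In exchange, your route gives a slightly stronger structural fact, which the paper does not need: equality in the Minkowski-type superadditivity forces $Z_1$ and $Z_2$ to be a.s.\ proportional.

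There is one slip in your sketch of the reverse H\"older step. The product $(Z_kS^{\gamma-1})^{q}S^{\gamma(1-q)}$ equals $Z_k^{q}S^{\gamma-q}$, which is not what you want. The correct factorisation is $S^\gamma=(Z_kS^{\gamma-1})^{q}\,Z_k^{-q}$, using $-q/(1-q)=\gamma$. H\"older with exponents $1/q$ and $1/(1-q)$ then gives $\mathbb{E}[S^\gamma]\le(\mathbb{E}[Z_kS^{\gamma-1}])^{q}(\mathbb{E}[Z_k^\gamma])^{1-q}$. Equality holds iff $Z_kS^{\gamma-1}$ is a.s.\ proportional to $Z_k^{\gamma}$, i.e.\ $Z_k\propto S$. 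Since $Z_k\le S$ and $\mathbb{E}[Z_k^\gamma],\mathbb{E}[S^\gamma]<\infty$, all these expectations are finite and positive, so your conclusion stands.
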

\begin{proof}
Let $y^m:=\frac{1}{2}(y^1+y^2)$ and note that, by Lemma~\ref{lm:argmax_convex}, we have $y^m\in \widehat{X}(\pi)$. Also, set
\[
\mathcal{W}_k:=\mathcal{W}(y^k\cdot R(\eta_1)) \quad \text{and}\quad Z_k:=\ln \mathcal{W}_k, \quad k\in \{1,2,m\},
\]
and denote by $M$ the common value $\Psi(y^1)=\Psi(y^2)=\Psi(y^m)=\max_{\mathcal{K}(\pi)}\Psi$. Also, recalling~\eqref{eq:lift} and~\eqref{eq:reward}, we get $\mu^\gamma[Z_k]=\Psi(y^k)=M$, for $k \in \{1,2,m\}$. Straightforward verification shows that, under~\eqref{A:integrability_log}, we have that $M$ is finite.

Next, since the map $y\to y\cdot R(\eta_1)$ is linear and $\mathcal{W}$ is
concave, we get, pointwise,
\begin{equation}\label{eq:pr:same_wealth:0}
    \mathcal{W}_m\geq \tfrac{1}{2}(\mathcal{W}_1+\mathcal{W}_2)\geq \sqrt{\mathcal{W}_1\mathcal{W}_2},
\end{equation}
where the second inequality is the inequality between the arithmetic and the geometric means. Taking the logarithms, we
obtain
\begin{equation}\label{eq:pr:same_wealth:1}
    Z_m\geq \tfrac{1}{2}(Z_1+Z_2), \quad \mathbb{P}\text{-a.s.}
\end{equation}
Also, using the concavity of $\mu^\gamma$, see~\eqref{eq:mu_basic}, and then the
monotonicity of $\mu^\gamma$ combined with~\eqref{eq:pr:same_wealth:1}, we get
\[
M=\tfrac{1}{2}\mu^\gamma[Z_1]+\tfrac{1}{2}\mu^\gamma[Z_2]\leq
\mu^\gamma\left[\tfrac{1}{2}(Z_1+Z_2)\right]\leq\mu^\gamma[Z_m]=M.
\]
Consequently, both inequalities above are in fact equalities. In particular, using~\eqref{eq:pr:same_wealth:1} and $\mu^\gamma\left[\tfrac{1}{2}(Z_1+Z_2)\right]=\mu^\gamma[Z_m]$, we get
\[
Z_m=\tfrac{1}{2}(Z_1+Z_2), \quad \mathbb{P}\text{-a.s.},
\]
i.e.\ $\mathcal{W}_m=\sqrt{\mathcal{W}_1\mathcal{W}_2}$ a.s. In fact, recalling~\eqref{eq:pr:same_wealth:0}, we have
\[
\sqrt{\mathcal{W}_1\mathcal{W}_2}= \tfrac{1}{2}(\mathcal{W}_1+\mathcal{W}_2), \quad \mathbb{P}\text{-a.s.},
\]
and the equality in the inequality between the arithmetic and the geometric means forces $\mathcal{W}_1=\mathcal{W}_2$ a.s.,
which concludes the proof.
\end{proof}

We are now ready to state the main result of this section.

\begin{theorem}\label{th:unique_maximiser}
Let $\gamma\leq 0$ and assume~\eqref{A:integrability_log} and~\eqref{A:support}. Then, for a fixed solution $(\bar{v}^\gamma, \bar\lambda^\gamma)\in\mathcal{C}_b(\mathcal{S})\times \mathbb{R}$ to~\eqref{eq:Bellman_log} with concave lift and any $\pi \in \mathcal{S}$, the
supremum on the right-hand side of~\eqref{eq:Bellman_log} is attained at exactly one point $\hat\pi(\pi)\in \mathcal{S}$.
Moreover, the map $\mathcal{S}\ni \pi\to \hat\pi(\pi)\in \mathcal{S}$ is continuous.
\end{theorem}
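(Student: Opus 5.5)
The plan is to prove uniqueness in capital coordinates and then transport it back to weights. By Lemma~\ref{lm:argmax_correspondence}, it suffices to show that $\widehat{X}(\pi)$ is a singleton for each $\pi\in\mathcal{S}$. Fix $y^1,y^2\in\widehat{X}(\pi)$. Proposition~\ref{pr:same_wealth} gives
\[
\mathcal{W}(y^1\cdot R(\eta_1))=\mathcal{W}(y^2\cdot R(\eta_1))\quad \mathbb{P}\text{-a.s.}
\]
The idea is to upgrade this almost-sure identity to the deterministic identity
\[
\mathcal{W}(y^1\cdot u)=\mathcal{W}(y^2\cdot u)\quad\text{for every } u\in\mathbb{R}^d_+ .
\]
Then we evaluate it at the unit vectors. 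For $u=e_i$, positive homogeneity gives $\mathcal{W}(y^k_i e_i)=y^k_i\,\mathcal{W}(e_i)$, and $\mathcal{W}(e_i)>0$ by~\eqref{eq:W_bounds}. Hence $y^1_i=y^2_i$ for every $i$, so $y^1=y^2$.

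To obtain the deterministic identity, I will use homogeneity to remove the scale of $R(\eta_1)$. Set $U:=R(\eta_1)/\langle R(\eta_1),\mathbf{1}\rangle$. Since $R$ has positive entries, $U$ takes values in $\mathcal{S}_0$, and $U=G(\bar\pi,R(\eta_1))$ with $\bar\pi:=\frac1d\mathbf{1}\in\mathcal{S}_0$. Because $\mathcal{W}$ is positively homogeneous, the identity is equivalent to $\mathcal{W}(y^1\cdot U)=\mathcal{W}(y^2\cdot U)$ a.s. Next I show that the law of $U$ has full support in $\mathcal{S}$. Since $\eta_1$ is independent of the price process and $\eta_1>0$ a.s., conditioning on $\eta_1$ (Fubini) gives, for any nonempty open $C\subset\mathcal{S}$,
\[
\mathbb{P}[U\in C]=\int_{(0,\infty)}\mathbb{P}[G(\bar\pi,R(t))\in C]\,\mathbb{P}_{\eta_1}(dt)>0,
\]
because the integrand is positive by Assumption~\eqref{A:support}. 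Now let $N:=\{u\in\mathcal{S}:\mathcal{W}(y^1\cdot u)\neq\mathcal{W}(y^2\cdot u)\}$. This set is relatively open in $\mathcal{S}$ by continuity of $\mathcal{W}$, and it has zero $U$-probability, so full support forces $N=\emptyset$. Thus the identity holds on all of $\mathcal{S}$, hence on $\mathbb{R}^d_+$ by homogeneity. This transfer step is where I expect the only real care to be needed. It relies on independence of $\eta_1$ and on choosing a strictly positive $\bar\pi$, since~\eqref{A:support} is stated only for $\pi\in\mathcal{S}_0$.

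For continuity of $\pi\mapsto\hat\pi(\pi)$, I will apply Berge's maximum theorem, or equivalently a direct closed-graph argument. The objective $(\pi,\pi')\mapsto \ln s(\pi,\pi')+\Psi_{\bar v^\gamma}(\pi')$ is jointly continuous on the compact set $\mathcal{S}\times\mathcal{S}$, by Lemma~\ref{lm:s_properties}(ii),(iii) and Lemma~\ref{lm:Feller}. The constraint set $\mathcal{S}$ does not depend on $\pi$. So if $\pi_n\to\pi$, every limit point of $\hat\pi(\pi_n)$ is a maximiser for $\pi$. Passing to the limit uses the inequality $J_{\pi_n}(\hat\pi(\pi_n))\ge J_{\pi_n}(\pi')$ for all $\pi'$, together with joint continuity. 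By uniqueness that limit point equals $\hat\pi(\pi)$, and compactness of $\mathcal{S}$ then gives convergence of the whole sequence.
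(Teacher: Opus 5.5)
Your proposal is correct and follows the paper's proof essentially step by step. You reduce to $\widehat{X}(\pi)$ via Lemma~\ref{lm:argmax_correspondence}, then combine Proposition~\ref{pr:same_wealth} with full support of the direction $R(\eta_1)/\langle R(\eta_1),\mathbf{1}\rangle$ to get $\mathcal{W}(y^1\cdot u)=\mathcal{W}(y^2\cdot u)$ on $\mathcal{S}$, evaluate at the vertices $e_i$, and obtain continuity by the compactness and limit-point argument. Your two deviations are refinements rather than a new route: choosing $\bar\pi=\frac1d\mathbf{1}$ makes the paper's auxiliary homeomorphism $m$ the identity, and your conditioning on $\eta_1$ (via independence and Fubini) makes explicit the passage from~\eqref{A:support}, stated for deterministic $t>0$, to the random time $\eta_1$, a step the paper applies without comment.
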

\begin{proof}
Let us fix $\pi\in \mathcal{S}$, take $y^1,y^2\in \widehat{X}(\pi)$, and define
\[
\varphi(x):=\mathcal{W}(y^1\cdot x)-\mathcal{W}(y^2\cdot x), \quad x\in \mathbb{R}^d_+.
\]
Note that $\varphi$ is continuous and, by the positive homogeneity of $\mathcal{W}$, positively homogeneous. Also, by
Proposition~\ref{pr:same_wealth}, we have $\varphi(R(\eta_1))=0$ a.s.

Now, we transfer $\varphi$ to the simplex $\mathcal{S}$. More specifically, since $\varphi$ is positively homogeneous and $\langle x,\mathbf{1}\rangle>0$ for
$x\in \mathbb{K}$, we may write $\varphi(x)=\langle x,\mathbf{1}\rangle\, g(x/\langle x,\mathbf{1}\rangle)$, where
$g:=\varphi|_{\mathcal{S}}\in \mathcal{C}_b(\mathcal{S})$. Thus, for $x\in \mathbb{K}$ we have $\varphi(x)=0$ if and only if
$g(x/\langle x,\mathbf{1}\rangle)=0$. In particular, denoting the (random) direction of the growth rates by
\[
\mathcal{R}:=R(\eta_1)/\langle R(\eta_1),\mathbf{1}\rangle \in \mathcal{S},
\]
the property $\varphi(R(\eta_1))=0$ a.s.\ is equivalent to $g(\mathcal{R})=0$ a.s.

Let us fix some $\pi^\circ\in \mathcal{S}_0$ and consider the map
$m\colon \mathcal{S}\to \mathcal{S}$ given by $m(u):=\frac{\pi^\circ\cdot u}{\langle \pi^\circ,u\rangle}$. Since all the
coordinates of $\pi^\circ$ are strictly positive, the map $m$ is a homeomorphism of $\mathcal{S}$, with the inverse given by
$m^{-1}(w)=\frac{w/\pi^\circ}{\langle w/\pi^\circ,\mathbf{1}\rangle}$, where the division is understood componentwise. Also,
by a direct computation,
\[
G(\pi^\circ, R(\eta_1))=\frac{\pi^\circ\cdot R(\eta_1)}{\langle \pi^\circ,R(\eta_1)\rangle}=\frac{\pi^\circ\cdot
\mathcal{R}}{\langle \pi^\circ,\mathcal{R}\rangle}=m(\mathcal{R}), \quad \text{i.e.}\quad
\mathcal{R}=m^{-1}(G(\pi^\circ,R(\eta_1))).
\]
By Assumption~\eqref{A:support}, the random variable $G(\pi^\circ,R(\eta_1))$ with positive probability takes values in any non-empty open subset of
$\mathcal{S}$ ; since $m^{-1}$ is a homeomorphism, the same is true for $\mathcal{R}$.

Now, we show that $g\equiv 0$. For the contradiction, suppose that $g(u_0)\neq 0$ for some $u_0\in \mathcal{S}$. Then, by the continuity of $g$, there
is a relatively open set $V\subset \mathcal{S}$ containing $u_0$ such that $g\neq 0$ on $V$. On the other hand, we have
$\mathbb{P}[\mathcal{R}\in V]>0$, which contradicts $g(\mathcal{R})=0$ a.s. Thus, $g\equiv 0$ on $\mathcal{S}$, which combined with the continuity of $\varphi$, shows $\varphi\equiv 0$ on $\mathbb{R}^d_+$, i.e.
\begin{equation}\label{eq:th:unique_maximiser:1}
    \mathcal{W}(y^1\cdot x)=\mathcal{W}(y^2\cdot x), \quad x \in \mathbb{R}^d_+.
\end{equation}
Let us fix $i\in \{1,\ldots,d\}$ and apply~\eqref{eq:th:unique_maximiser:1} with $x:=e_i$, the $i$th vector of the canonical
basis; this is admissible, since~\eqref{eq:th:unique_maximiser:1} holds on the whole orthant, including its vertices. Using
the positive homogeneity of $\mathcal{W}$, we get $\mathcal{W}(y^k\cdot e_i)=\mathcal{W}(y^k_ie_i)=y^k_i\mathcal{W}(e_i)$,
$k=1,2$, so that
\[
y^1_i\mathcal{W}(e_i)=y^2_i\mathcal{W}(e_i), \quad \text{where } \mathcal{W}(e_i)\geq c_1>0,
\]
by~\eqref{eq:W_bounds}. Hence, $y^1_i=y^2_i$ and, as $i$ was arbitrary, $y^1=y^2$. This shows that $\widehat{X}(\pi)$ is a
singleton and, by Lemma~\ref{lm:argmax_correspondence}, so is $\widehat\Pi(\pi)$; we denote its unique element by
$\hat\pi(\pi)$.

It remains to show the continuity of $\hat\pi$. Let $\pi_n\to \pi$ in $\mathcal{S}$ and suppose that $\hat\pi(\pi_n)\not\to
\hat\pi(\pi)$. Then, by the compactness of $\mathcal{S}$, we may find a subsequence $(\pi_{n_k})$ and $\tilde\pi\neq
\hat\pi(\pi)$ such that $\hat\pi(\pi_{n_k})\to \tilde\pi$ as $k\to\infty$. Recalling that
$J_\pi(\pi')=\ln s(\pi,\pi')+\Psi_{\bar{v}^\gamma}(\pi')$ is jointly continuous in $(\pi,\pi')$, by Lemma~\ref{lm:s_properties} and
Lemma~\ref{lm:Feller}, and passing to the limit in the inequality $J_{\pi_{n_k}}(\hat\pi(\pi_{n_k}))\geq
J_{\pi_{n_k}}(\pi')$, $\pi'\in \mathcal{S}$, we get $J_{\pi}(\tilde\pi)\geq J_{\pi}(\pi')$ for any $\pi'\in \mathcal{S}$.
Thus, $\tilde\pi$ is a maximiser at $\pi$, which contradicts the uniqueness shown above.
\end{proof}

\begin{remark}
It should be noted that some support condition is necessary for the uniqueness of the maximiser. Indeed, for $d=3$, if $R_1(\eta_1)=R_2(\eta_1)$ a.s.\ and $c_1=h_1=c_2=h_2=0$, assets $1$ and $2$ are indistinguishable and
freely exchangeable, and $\widehat X(\pi)$ contains a segment. Consequently, by Lemma~\ref{lm:argmax_correspondence}, maximisers are not unique.
\end{remark}


\subsection{Uniqueness of a solution to the Bellman equation}\label{SS:unique_solution}

Based on Theorem~\ref{th:verification} we know that the constant $\bar\lambda^\gamma$ from Equation~\eqref{eq:Bellman_log} is uniquely determined as the optimal value of the underlying control problem. Also, in Theorem~\ref{th:unique_maximiser}, we showed that, under the full support condition, the maximiser for a fixed solution to the Bellman equation is unique. In this section, we finally show that a function solving~\eqref{eq:Bellman_log} is unique up to an additive constant.

Before we proceed, let us state an additional condition used in this section. In principle, the assumption will ensure that all the assets are present in the optimal portfolio rebalancing.

\begin{enumerate}
\item[(\namedlabel{A:growth_gamma}{$\mathcal{A}5$})] (Growth)
For any $\gamma\le 0$, $i\in\{1,\dots,d\}$, and every $\pi'\in\mathcal{S}$ with $\pi'_i=0$, we have
\begin{equation}\label{eq:growth_gamma}
    \frac{\mathbb{E}\!\left[R_i(\eta_1)\,\langle\pi',R(\eta_1)\rangle^{\gamma-1}\right]}{\mathbb{E}\!\left[\langle\pi',R(\eta_1)\rangle^{\gamma}\right]}\ >\ \bar s^{\,(\gamma-1)}\,(1+C),
\end{equation}
where $C:=2\Lambda/\bar s$ and $\Lambda:=\frac{\bar{c}}{1-\bar{c}}$ is the Lipschitz constant of $s(\pi,\cdot)$ from Lemma~\ref{lm:s_properties}(iii)).
\end{enumerate}
It should be noted that, for $\gamma=0$, Assumption~\eqref{A:growth_gamma} states that the ratio of the one-period growth of the $i$th asset $R_i(\eta_1)$ to the $i$-free portfolio growth $\langle \pi', R(\eta_1)\rangle$ is on average bounded from below by a sufficiently big constant. For $\gamma<0$, the assumption accounts for the non-linearity of the entropic utility.

Now, we state a technical result showing that, under~\eqref{A:growth_gamma}, all the assets are used in the Bellman equation-induced optimal rebalancing. Note that, in the proposition, we use the existence of a solution to the Bellman equation with the concave lift, which is guaranteed by Theorem~\ref{th:existence_concave}.

\begin{proposition}\label{pr:interior}
Let $\gamma\le 0$, assume \eqref{A:integrability_log} and \eqref{A:growth_gamma}, and let $(\bar{v}^\gamma, \bar\lambda^\gamma)\in\mathcal{C}_b(\mathcal{S})\times \mathbb{R}$ solve the Bellman equation~\eqref{eq:Bellman_log} with concave lift $\mathcal{W}_{\bar v^\gamma}$. Then, for every $\pi\in\mathcal{S}$, every maximiser $\pi^\gamma(\pi)$ of the right-hand side of~\eqref{eq:Bellman_log} lies in the interior $\mathcal{S}_0$.
\end{proposition}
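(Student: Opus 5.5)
The plan is to argue by contradiction with a first-order perturbation towards the missing asset. Suppose that for some $\pi\in\mathcal{S}$ a maximiser $\pi'$ of $J_\pi(\pi'):=\ln s(\pi,\pi')+\Psi_{\bar v^\gamma}(\pi')$ has $\pi'_i=0$ for some $i$. Consider $\pi'_\varepsilon:=(1-\varepsilon)\pi'+\varepsilon e_i\in\mathcal{S}$ for $\varepsilon\in(0,1)$. The aim is to show $J_\pi(\pi'_\varepsilon)>J_\pi(\pi')$ for small $\varepsilon>0$.

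Since adding a constant to $\bar v^\gamma$ changes neither the maximisers nor the concavity of the lift (the lift is only multiplied by a positive constant), we may normalise $\max_{\mathcal{S}}\bar v^\gamma=0$. Theorem~\ref{th:existence_concave}(ii) is stated for the particular solution constructed there, so we do not invoke it. Instead, we rederive the bounds for our fixed solution $\bar v^\gamma$. The upper bound $\mathcal{W}(x)\le\langle x,\mathbf{1}\rangle$ follows from the normalisation and \eqref{eq:lift}. The lower bound $\mathcal{W}(x)\ge\bar s\langle x,\mathbf{1}\rangle$ requires $\spann_{\mathcal{S}}\bar v^\gamma\le\ln\frac1{\bar s}$. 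This holds because $\bar v^\gamma=\mathcal{T}\bar v^\gamma-\bar\lambda^\gamma\mathbb{E}[\eta_1]$ by \eqref{eq:fixed_point}, so $\spann_{\mathcal{S}}\bar v^\gamma=\spann_{\mathcal{S}}\mathcal{T}\bar v^\gamma\le\ln\frac1{\bar s}$ by \eqref{eq:apriori}. Hence
\[
\bar s\langle x,\mathbf{1}\rangle\le\mathcal{W}(x)\le\langle x,\mathbf{1}\rangle,\qquad x\in\mathbb{R}^d_+.
\]

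\textbf{Transaction-cost term.} Since $\Vert\pi'_\varepsilon-\pi'\Vert_1=2\varepsilon$, Lemma~\ref{lm:s_properties}(ii),(iii) and the mean value theorem for $\ln$ on $[\bar s,1]$ give
\[
\ln s(\pi,\pi'_\varepsilon)\ge\ln s(\pi,\pi')-\tfrac{2\Lambda}{\bar s}\varepsilon=\ln s(\pi,\pi')-C\varepsilon .
\]

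\textbf{Reward term.} We work in capital coordinates, $\Psi_{\bar v^\gamma}(\pi'')=\mu^\gamma[\ln\mathcal{W}(\pi''\cdot R(\eta_1))]$. Write $W_0:=\mathcal{W}(\pi'\cdot R(\eta_1))$ and $P:=\langle\pi',R(\eta_1)\rangle$, so that $\bar sP\le W_0\le P$. Concavity of $\mathcal{W}$ and positive homogeneity give
\[
\mathcal{W}(\pi'_\varepsilon\cdot R(\eta_1))\ge(1-\varepsilon)W_0+\varepsilon\,\mathcal{W}(e_i)R_i(\eta_1)\ge(1-\varepsilon)W_0+\varepsilon\bar sR_i(\eta_1)=:X_\varepsilon .
\]
By monotonicity, $\Psi_{\bar v^\gamma}(\pi'_\varepsilon)\ge\mu^\gamma[\ln X_\varepsilon]$. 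For $\gamma<0$ we have $\mu^\gamma[\ln X_\varepsilon]=\frac1\gamma\ln\mathbb{E}[X_\varepsilon^\gamma]$, whose right derivative at $\varepsilon=0$ is
\[
\frac{\mathbb{E}[W_0^{\gamma-1}(\bar sR_i-W_0)]}{\mathbb{E}[W_0^\gamma]}=\bar s\,\frac{\mathbb{E}[R_iW_0^{\gamma-1}]}{\mathbb{E}[W_0^\gamma]}-1 .
\]
Because $\gamma-1<0$ and $W_0\le P$, we get $W_0^{\gamma-1}\ge P^{\gamma-1}$. Because $\gamma\le 0$ and $W_0\ge\bar sP$, we get $W_0^\gamma\le\bar s^\gamma P^\gamma$. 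The derivative is therefore at least
\[
\bar s^{1-\gamma}\,\frac{\mathbb{E}[R_iP^{\gamma-1}]}{\mathbb{E}[P^\gamma]}-1 .
\]
By \eqref{eq:growth_gamma} this exceeds $(1+C)-1=C$; the factor $\bar s^{1-\gamma}$ exactly cancels $\bar s^{\gamma-1}$ there. The case $\gamma=0$ is identical, with derivative $\mathbb{E}[(\bar sR_i-W_0)/W_0]\ge\bar s\,\mathbb{E}[R_i/P]-1$.

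\textbf{Conclusion and main obstacle.} Combining the two terms, $\varepsilon\mapsto J_\pi(\pi'_\varepsilon)-J_\pi(\pi')$ is bounded below by a function vanishing at $0$ with right derivative strictly greater than $C-C=0$. Hence $J_\pi(\pi'_\varepsilon)>J_\pi(\pi')$ for small $\varepsilon>0$, which contradicts maximality. So every maximiser lies in $\mathcal{S}_0$.

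The main technical obstacle is justifying the differentiation under the expectation at $\varepsilon=0^+$. This concerns the difference quotients of $X_\varepsilon^\gamma$, respectively of $\ln X_\varepsilon$. For $\gamma<0$, convexity of $t\mapsto t^\gamma$ makes the difference quotients monotone in $\varepsilon$, so monotone convergence applies. The integrability needed comes from $X_\varepsilon\ge(1-\varepsilon)\bar s\,e^{\min_j r_j(\eta_1)}$ together with \eqref{A:integrability_log}. The term $R_iP^{\gamma-1}$ may have infinite expectation, but then the derivative is $+\infty$ and the argument only becomes easier. For $\gamma=0$, concavity of $\ln$ likewise gives monotone difference quotients.
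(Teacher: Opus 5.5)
Your proposal is correct and follows essentially the same route as the paper. Both arguments perturb a boundary maximiser towards $e_i$ and use concavity and homogeneity of the lift, the $-C\varepsilon$ bound on the $\ln s$ term, and the span bound $\spann_{\mathcal{S}}\bar v^\gamma\le\ln\frac{1}{\bar s}$ taken from the fixed-point identity. The growth condition then yields a first-order gain strictly exceeding $C$. The only difference is technical: the paper passes to the tilted measure $\widehat{\mathbb{P}}$ with density proportional to $W_0^\gamma$ and truncates at level $K$, whereas you differentiate $\frac{1}{\gamma}\ln\mathbb{E}[X_\varepsilon^\gamma]$ directly via monotone difference quotients. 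Your derivative equals $\bar s\,\mathbb{E}_{\widehat{\mathbb{P}}}[R_i(\eta_1)/W_0]-1$, so both routes reach the same constant $\bar s^{1-\gamma}\,\mathbb{E}[R_i(\eta_1)P^{\gamma-1}]/\mathbb{E}[P^\gamma]$.
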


\begin{proof}
Fix $\pi\in\mathcal{S}$ and set
\[
    \Phi(\pi'):=\ln s(\pi,\pi')+\mu^\gamma\!\big[\ln\mathcal{W}_{\bar v^\gamma}(\pi'\cdot R(\eta_1))\big],\qquad \pi'\in\mathcal{S}.
\]
By~\eqref{eq:lift} and Lemma~\ref{lm:Feller}, the map $\Phi$ is continuous, attains its supremum on the compact set $\mathcal{S}$, and the maximisers of $\Phi$ correspond to the maximisers at the right-hand side of~\eqref{eq:Bellman_log}. Thus, it suffices to show that no boundary point maximises $\Phi$. To do this, let $\pi^\circ\in\partial\mathcal{S}$ satisfy $\pi^\circ_i=0$ for some $i$, and for $\varepsilon\in(0,1)$ put $\pi'_\varepsilon:=(1-\varepsilon)\pi^\circ+\varepsilon e_i$, where $e_i$ is the $i$th vertex of $\mathcal{S}$. Write $x:=\pi^\circ\cdot R(\eta_1)$, so $x_i=0$ and $\langle x,\mathbf{1}\rangle=\langle\pi^\circ,R(\eta_1)\rangle$. Also, set $Y:=R_i(\eta_1)/\langle\pi^\circ,R(\eta_1)\rangle\ge0$. In the following, we show that for a sufficiently small $\varepsilon$, we have $\Phi(\pi'_\varepsilon)>\Phi(\pi^\circ)$, which shows that no boundary point is optimal.

First, we establish a lower bound for $\ln\mathcal{W}_{\bar v^\gamma}\big(\pi'_\varepsilon\cdot R(\eta_1)\big)-\ln\mathcal{W}_{\bar v^\gamma}(x) $. Since $\pi'_\varepsilon\cdot R(\eta_1)=(1-\varepsilon)\,x+\varepsilon\,\big(e_i\cdot R(\eta_1)\big)=(1-\varepsilon)\,x+\varepsilon\,R_i(\eta_1)\,e_i$ is a convex combination of $x$ and $R_i(\eta_1)e_i$, the concavity of $\mathcal{W}_{\bar v^\gamma}$ (see Theorem~\ref{th:existence_concave}(i))  gives
\begin{equation}\label{eq:pr:interior:1}
    \mathcal{W}_{\bar v^\gamma}\big(\pi'_\varepsilon\cdot R(\eta_1)\big)
    \ \ge\ (1-\varepsilon)\,\mathcal{W}_{\bar v^\gamma}(x)+\varepsilon\,\mathcal{W}_{\bar v^\gamma}\big(R_i(\eta_1)e_i\big), 
\end{equation}
and the $1$-homogeneity of $\mathcal{W}_{\bar v^\gamma}$ turns the last term into $\varepsilon R_i(\eta_1)\mathcal{W}_{\bar v^\gamma}(e_i)$. Using the facts that $\mathcal{W}_{\bar v^\gamma}(x)=\langle\pi^\circ,R(\eta_1)\rangle\,e^{\bar v^\gamma(G(\pi^\circ,R(\eta_1)))}>0$ and $\mathcal{W}_{\bar v^\gamma}(e_i)=e^{\bar v(e_i)}$, we get
\[
    \frac{\mathcal{W}_{\bar v^\gamma}(e_i)}{\mathcal{W}_{\bar v^\gamma}(x)}
    =\frac{e^{\,\bar v^\gamma(e_i)-\bar v^\gamma(G(\pi^\circ,R(\eta_1)))}}{\langle\pi^\circ,R(\eta_1)\rangle}
    \ \ge\ \frac{e^{-\spann(\bar v^\gamma)}}{\langle\pi^\circ,R(\eta_1)\rangle}
    =\frac{\kappa}{\langle\pi^\circ,R(\eta_1)\rangle},
\]
where $\kappa:=e^{-\spann(\bar v^\gamma)}>0$. Thus, from~\eqref{eq:pr:interior:1}, we obtain $\mathcal{W}_{\bar v^\gamma}(\pi'_\varepsilon\cdot R(\eta_1))/\mathcal{W}_{\bar v^\gamma}(x)\ge(1-\varepsilon)+\varepsilon\kappa Y=:L_\varepsilon>0$. Taking logarithms, we get
\begin{equation}\label{eq:ratio_unified}
    \ln\mathcal{W}_{\bar v^\gamma}\big(\pi'_\varepsilon\cdot R(\eta_1)\big)-\ln\mathcal{W}_{\bar v^\gamma}(x)\ \ge\ \ln L_\varepsilon .
\end{equation}

Second, we provide a lower bound for $\mu^\gamma[\ln\mathcal{W}_{\bar v^\gamma}\big(\pi'_\varepsilon\cdot R(\eta_1)\big)]-\mu^\gamma[\ln\mathcal{W}_{\bar v^\gamma}(x) ]$.  The argument is based on the change-of-measure technique. More specifically, let $U:=\ln\mathcal{W}_{\bar v^\gamma}(x)$, so $e^{\gamma U}=\left(\mathcal{W}_{\bar v^\gamma}(x)\right)^\gamma$. Also, note that, for any $\gamma\leq 0$, we have
\[
0<e^{\gamma U} = \left(\mathcal{W}_{\bar v^\gamma}(x)\right)^\gamma\le e^{-\gamma\Vert \bar v^\gamma \Vert}\langle\pi^\circ,R(\eta_1)\rangle^\gamma\le e^{-\gamma\Vert \bar v^\gamma \Vert}e^{\gamma\min_j r_j(\eta_1)}
\]
and, by~\eqref{A:integrability_log}, the upper bound is integrable. Thus, we may set $D:=\mathbb{E}[\left(\mathcal{W}_{\bar v^\gamma}(x)\right)^\gamma]$ and define a probability measure $\widehat{\mathbb{P}}$ by $\tfrac{d\widehat{\mathbb{P}}}{d\mathbb{P}}:=\mathcal{W}_{\bar v^\gamma}(x)^\gamma/D$; note that $\widehat{\mathbb{P}}=\mathbb{P}$ for $\gamma=0$. Next, for $\gamma<0$ and any sufficiently integrable $Z$, we set $\mu^\gamma_{\widehat{\mathbb{P}}}[Z]:=\tfrac1\gamma\ln\mathbb{E}_{\widehat{\mathbb{P}}}\big[e^{\gamma Z}\big]$ and note that
\begin{equation}\label{eq:tilt_identity}
\mu^\gamma[U+Z]=\tfrac1\gamma\ln\mathbb{E}[e^{\gamma U}e^{\gamma Z}]=\tfrac1\gamma\ln(\mathbb{E}[e^{\gamma U}]\,\mathbb{E}_{\widehat{\mathbb{P}}}[e^{\gamma Z}])=\mu^\gamma[U]+\mu^\gamma_{\widehat{\mathbb{P}}}[Z].
\end{equation}
In fact, the identity is also true for $\gamma=0$ with $\mu^0_{\widehat{\mathbb{P}}}[Z]:=\mathbb{E}[Z]$ by the additivity of the expectation. Thus, for any $\gamma\leq 0$, using the monotonicity of the entropic utility, inequality~\eqref{eq:ratio_unified}, and the identity~\eqref{eq:tilt_identity} with $Z=\ln L_\varepsilon$, we obtain
\begin{align}\label{eq:Delta_unified}
    \Delta_\varepsilon&:=\mu^\gamma\!\big[\ln\mathcal{W}_{\bar v^\gamma}(\pi'_\varepsilon\cdot R(\eta_1))\big]-\mu^\gamma[\ln\mathcal{W}_{\bar v^\gamma}(x) ] \nonumber\\
    &=\mu^\gamma\!\big[\ln\mathcal{W}_{\bar v^\gamma}(\pi'_\varepsilon\cdot R(\eta_1))-U+U\big]-\mu^\gamma[U]
    \nonumber\\
    &\ge\ \mu^\gamma\!\big[\ln L_\varepsilon + U\big]-\mu^\gamma[U]
    =\mu^\gamma_{\widehat{\mathbb{P}}}\!\big[\ln L_\varepsilon\big].
\end{align}

Third, we provide a lower bound for $\frac{\Delta_\varepsilon}{\varepsilon}$ with small $\varepsilon>0$. To do this, for any $K>0$ define a bounded random variable $W_K:=\kappa(Y\wedge K)-1$. As $Y\ge Y\wedge K$ and $\mu^\gamma_{\widehat{\mathbb{P}}}$ is monotone, for any $K>0$, we get
\[
\mu^\gamma_{\widehat{\mathbb{P}}}[\ln L_\varepsilon] = \mu^\gamma_{\widehat{\mathbb{P}}}[\ln((1-\varepsilon)+\varepsilon\kappa Y) ]\ge \mu^\gamma_{\widehat{\mathbb{P}}}[\ln(1+\varepsilon W_K)]=:g_K(\varepsilon).
\]
Also, the map $\varepsilon \to g_K(\varepsilon)$ is differentiable at $0$ and, by direct computation, we get $g_K(0)=0$ and  $g_K'(0)=\mathbb{E}_{\widehat{\mathbb{P}}}[W_K]=\kappa\mathbb{E}_{\widehat{\mathbb{P}}}[Y\wedge K]-1$. Hence $\liminf_{\varepsilon\downarrow0}\Delta_\varepsilon/\varepsilon\ge g_K'(0)$ for every $K$. Since the left-hand side is independent of $K$, taking the supremum over $K>0$ and using monotone convergence, we obtain $\mathbb{E}_{\widehat{\mathbb{P}}}[Y\wedge K]\uparrow\mathbb{E}_{\widehat{\mathbb{P}}}[Y]$, hence
\begin{equation}\label{eq:liminf_unified}
    \liminf_{\varepsilon\downarrow0}\frac{\Delta_\varepsilon}{\varepsilon}\ \ge\ \kappa\,\mathbb{E}_{\widehat{\mathbb{P}}}[Y]-1 .
\end{equation}

Fourth, we provide a lower bound for $\kappa\,\mathbb{E}_{\widehat{\mathbb{P}}}[Y]$ under~\eqref{A:growth_gamma}. Recalling that $\mathcal{W}_{\bar v^\gamma}(x)=\langle\pi^\circ,R(\eta_1)\rangle\,e^{\bar v^\gamma(G(\pi^\circ,R(\eta_1)))}$, for any $\gamma\le0$, we obtain
\[
    e^{\gamma\sup \bar v^\gamma }\langle\pi^\circ,R(\eta_1)\rangle^{\gamma}\ \le\ \mathcal{W}_{\bar v^\gamma}(x)^\gamma\ \le\ e^{\gamma\inf\bar v^\gamma }\langle\pi^\circ,R(\eta_1)\rangle^{\gamma}.
\]
Using these inequalities and the fact that $\langle\pi^\circ,R(\eta_1)\rangle^{\gamma}Y=R_i(\eta_1)\langle\pi^\circ,R(\eta_1)\rangle^{\gamma-1}$, and setting $    \rho:=\frac{\mathbb{E}[R_i(\eta_1)\langle\pi^\circ,R(\eta_1)\rangle^{\gamma-1}]}{\mathbb{E}[\langle\pi^\circ,R(\eta_1)\rangle^{\gamma}]}$, we get
\begin{align*}
    \mathbb{E}_{\widehat{\mathbb{P}}}[Y] = \frac{\mathbb{E}[Y\left(\mathcal{W}_{\bar v^\gamma}(x)\right)^\gamma]}{\mathbb{E}[\left(\mathcal{W}_{\bar v^\gamma}(x)\right)^\gamma] }&=\frac{\mathbb{E}[R_i(\eta_1)\langle\pi^\circ,R(\eta_1)\rangle ^{-1}\left(\mathcal{W}_{\bar v^\gamma}(x)\right)^\gamma]}{\mathbb{E}[\left(\mathcal{W}_{\bar v^\gamma}(x)\right)^\gamma] }\\
    &\ge\ \frac{e^{\gamma\sup\bar v}}{e^{\gamma\inf\bar v}}\,
    \frac{\mathbb{E}[R_i(\eta_1)\langle\pi^\circ,R(\eta_1)\rangle^{\gamma-1}]}{\mathbb{E}[\langle\pi^\circ,R(\eta_1)\rangle^{\gamma}]}
    \ =\ e^{\gamma\spann\bar v^\gamma}\,\rho.
\end{align*}
Multiplying by $\kappa=e^{-\spann\bar v^\gamma}$ and recalling that by Lemma~\ref{lm:apriori} we have $\spann \bar v^\gamma\le  \ln(1/\bar s)$ so that $e^{(\gamma-1)\spann\bar v^\gamma}\ge \bar s^{(1-\gamma)}$), we obtain
\[
    \kappa\,\mathbb{E}_{\widehat{\mathbb{P}}}[Y]\ \ge\ e^{(\gamma-1)\spann\bar v^\gamma}\,\rho\ \ge\ \bar s^{\,(1-\gamma)}\,\rho .
\]
By Assumption~\eqref{A:growth_gamma}, we have $\rho>\bar s^{\,(\gamma-1)}(1+C)$, and therefore
\begin{equation}\label{eq:pr:interior:2}
    \kappa\,\mathbb{E}_{\widehat{\mathbb{P}}}[Y]\ >\ \bar s^{\,(1-\gamma)}\,\bar s^{\,(\gamma-1)}\,(1+C)\ =\ 1+C .
\end{equation}

Finally, we show that $\Phi(\pi'_\varepsilon)>\Phi(\pi^\circ)$ for a sufficiently small $\varepsilon>0$. 
By Lemma~\ref{lm:s_properties}, the map $s(\pi,\cdot)$ is Lipschitz with constant $\Lambda$, and $s\ge\bar s>0$, so $\ln s(\pi,\cdot)$ is Lipschitz with constant $\Lambda/\bar s$. Also, since $\|\pi'_\varepsilon-\pi^\circ\|_1=\varepsilon\|e_i-\pi^\circ\|_1\le2\varepsilon$, we have
\[
    \frac{\ln s(\pi,\pi'_\varepsilon)-\ln s(\pi,\pi^\circ)}{\varepsilon}\ \ge\ -\frac{2\Lambda}{\bar s}=-C .
\]
Combining this with $\Phi(\pi'_\varepsilon)-\Phi(\pi^\circ)=(\ln s(\pi,\pi'_\varepsilon)-\ln s(\pi,\pi^\circ))+\Delta_\varepsilon$,~\eqref{eq:liminf_unified}, and~\eqref{eq:pr:interior:2}, we obtain
\[
    \liminf_{\varepsilon\downarrow0}\frac{\Phi(\pi'_\varepsilon)-\Phi(\pi^\circ)}{\varepsilon}
    \ \ge\ \big(\kappa\,\mathbb{E}_{\widehat{\mathbb{P}}}[Y]-1\big)-C\ >\ 0 .
\]
In particular $\Phi(\pi'_\varepsilon)>\Phi(\pi^\circ)$ for all sufficiently small $\varepsilon>0$, so $\pi^\circ$ is not a maximiser of $\Phi$. As $\pi^\circ$ was an arbitrary boundary point, every maximiser of $\Phi$ lies in $\mathcal{S}_0$.
\end{proof}

As we show now, Assumption~\eqref{A:support} and Assumption~\eqref{A:growth_gamma} guarantee that the solution to the Bellman equation is unique up to a constant.

\begin{theorem}\label{th:uniqueness}
    Assume~\eqref{B:waiting_times},~\eqref{A:integrability_log},~\eqref{A:support}, and~\eqref{A:growth_gamma}. Also, let $\gamma\leq 0$ and let $\bar{v}^\gamma_1\in \mathcal{C}_b(\mathcal{S})$ and $\bar{v}^\gamma_2\in \mathcal{C}_b(\mathcal{S})$ be solutions to~\eqref{eq:Bellman_log} with concave lifts and the same constant $\bar\lambda^\gamma$. Then, there exists a constant $D\in \mathbb{R}$ such that $\bar{v}^\gamma_1\equiv \bar{v}^\gamma_2 + D$.
\end{theorem}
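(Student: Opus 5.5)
The plan is a maximum-principle argument for the difference $w:=\bar{v}^\gamma_1-\bar{v}^\gamma_2\in \mathcal{C}_b(\mathcal{S})$. The strict positivity of the maximiser (Proposition~\ref{pr:interior}) combined with the full-support condition~\eqref{A:support} should force $w$ to be constant.

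\textbf{Step 1: a one-step inequality at a maximum point of $w$.} Let $\pi^*\in\mathcal{S}$ be a point where $w$ attains its maximum $M:=\max_{\mathcal{S}} w$; it exists by continuity and compactness. Let $\pi_1\in\mathcal{S}$ be a maximiser of the right-hand side of~\eqref{eq:Bellman_log} for $\bar v^\gamma_1$ at $\pi^*$; it exists by Lemma~\ref{lm:Feller} and Lemma~\ref{lm:s_properties}. By Proposition~\ref{pr:interior}, applied to $\bar v_1^\gamma$, which has a concave lift, we have $\pi_1\in\mathcal{S}_0$. The Bellman equation for $\bar v^\gamma_1$ at $\pi^*$ holds with equality at $\pi_1$. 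The equation for $\bar v^\gamma_2$ at $\pi^*$ gives only an inequality at the candidate $\pi_1$. Both equations have the same $\bar\lambda^\gamma$, so subtracting cancels the terms $\ln s(\pi^*,\pi_1)$ and $\bar\lambda^\gamma\mathbb{E}[\eta_1]$, and we get
\[
M=w(\pi^*)\le \Psi_{\bar v_1^\gamma}(\pi_1)-\Psi_{\bar v_2^\gamma}(\pi_1).
\]

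\textbf{Step 2: the tilted-measure identity.} Put $U:=\ln\langle \pi_1,R(\eta_1)\rangle+\bar v^\gamma_2(G(\pi_1,R(\eta_1)))$ and $Z:=w(G(\pi_1,R(\eta_1)))$. Then $\Psi_{\bar v_1^\gamma}(\pi_1)=\mu^\gamma[U+Z]$ and $\Psi_{\bar v_2^\gamma}(\pi_1)=\mu^\gamma[U]$. Exactly as in~\eqref{eq:tilt_identity}, define $\widehat{\mathbb{P}}$ by $d\widehat{\mathbb{P}}/d\mathbb{P}:=e^{\gamma U}/\mathbb{E}[e^{\gamma U}]$, where integrability holds by~\eqref{A:integrability_log} and the density is strictly positive, so $\widehat{\mathbb{P}}\sim\mathbb{P}$. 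This gives $\mu^\gamma[U+Z]-\mu^\gamma[U]=\mu^\gamma_{\widehat{\mathbb{P}}}[Z]$. Since $Z\le M$ pointwise, monotonicity gives $\mu^\gamma_{\widehat{\mathbb{P}}}[Z]\le M$, so Step~1 turns this into an equality. The equality case then forces $Z=M$ $\widehat{\mathbb{P}}$-a.s., hence $\mathbb{P}$-a.s. For $\gamma<0$ this follows from $\mathbb{E}_{\widehat{\mathbb{P}}}[e^{\gamma Z}]\ge e^{\gamma M}$ with equality iff $Z=M$ a.s.; for $\gamma=0$ it follows from the analogous statement for the expectation. Thus $w(G(\pi_1,R(\eta_1)))=M$ almost surely.

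\textbf{Step 3: from almost sure to everywhere.} Suppose $w(u_0)<M$ for some $u_0$. By continuity there is a nonempty relatively open set $V\subset\mathcal{S}$ on which $w<M$. Since $\pi_1\in\mathcal{S}_0$, assumption~\eqref{A:support} gives $\mathbb{P}[G(\pi_1,R(t))\in V]>0$ for every fixed $t>0$. Because $\eta_1$ is independent of the price process and $\eta_1>0$ a.s. by~\eqref{B:waiting_times}, conditioning on $\eta_1$ yields
\[
\mathbb{P}[G(\pi_1,R(\eta_1))\in V]=\mathbb{E}\big[\,p_V(\eta_1)\,\big]>0,\qquad p_V(t):=\mathbb{P}[G(\pi_1,R(t))\in V].
\]
Here we use the stationarity in~\eqref{B:increments} and need $p_V$ to be measurable, which is routine. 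This contradicts Step~2, so $w\equiv M$, i.e.\ $\bar v_1^\gamma=\bar v_2^\gamma+D$ with $D:=M$.

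The main obstacle is Step~2: one must apply the change-of-measure identity correctly and justify the strict equality case of the entropic utility. The point that makes the argument work is Proposition~\ref{pr:interior}: only an interior $\pi_1$ allows~\eqref{A:support} to be applied. This is why~\eqref{A:growth_gamma} is needed, and why the concavity of the lift of $\bar v_1^\gamma$ must be used there.
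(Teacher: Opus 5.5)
Your proof is correct and is essentially the paper's argument. The paper also takes an extremal point of the difference of the (normalised) solutions and uses there the maximiser of one of them, which is interior by Proposition~\ref{pr:interior}; it then combines the strict monotonicity of $\mu^\gamma$ with \eqref{A:support} to reach a contradiction. Your tilted-measure Step~2 is an equivalent way of reaching that equality case, and your conditioning on $\eta_1$ in Step~3 makes explicit the passage from a fixed $t$ to the random time $\eta_1$ in \eqref{A:support}, which the paper leaves implicit (only independence of $\eta_1$ from the prices is needed there, not stationarity).
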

\begin{proof}
    Let us fix some $\bar\pi\in \mathcal{S}$ and define 
    \[
    \bar{v}^\gamma_3(\pi):=\bar{v}^\gamma_1(\pi) - \bar{v}^\gamma_1(\bar\pi) \quad \text{and}\quad \bar{v}^\gamma_4(\pi):=\bar{v}^\gamma_2(\pi) - \bar{v}^\gamma_2(\bar\pi), \quad \pi \in \mathcal{S}.
    \]
    Then, $\bar{v}^\gamma_3$ and $\bar{v}^\gamma_4$ are also solutions to~\eqref{eq:Bellman_log}. We show that $\bar{v}^\gamma_3\equiv \bar{v}^\gamma_4$ which means that $\bar{v}^\gamma_1$ and $\bar{v}^\gamma_2$ differ by a constant. 

    For the contradiction, suppose that $\bar{v}^\gamma_3\not\equiv \bar{v}^\gamma_4$. Then, possibly after swapping the roles of $\bar{v}^\gamma_3$ and $\bar{v}^\gamma_4$, we may assume that there exists $\hat\pi\in \mathcal{S}$ such that 
    \[
    B:=\inf_{\pi\in \mathcal{S}}(\bar{v}^\gamma_3(\pi)-\bar{v}^\gamma_4(\pi) ) = \bar{v}^\gamma_3(
    \hat\pi)-\bar{v}^\gamma_4(\hat\pi)<0. 
\]
Hence, we have $\bar{v}^\gamma_3(
    \pi)-\bar{v}^\gamma_4(\pi)\geq B$, $\pi \in \mathcal{S}$ and the set $\mathcal{D}:=\{\pi \in \mathcal{S}\colon  \bar{v}^\gamma_3(
    \pi)-\bar{v}^\gamma_4(\pi)> B \}$ is open and non-empty as $\bar\pi\in \mathcal{D}$. Also, let $\tilde\pi\in \mathcal{S}$ be the maximiser for $\bar{v}^\gamma_4(\hat\pi)$. Using Assumption~\eqref{A:growth_gamma} and Proposition~\ref{pr:interior}, we have $\tilde\pi\in \mathcal{S}_0$. Then, using Assumption~\eqref{A:support} we have $\mathbb{P}[G(\tilde\pi, R(\eta_1))\in \mathcal{D}]>0$. Thus, we obtain
\begin{align*}
    \bar{v}^\gamma_3(\hat\pi)&\geq  \ln s(\hat\pi,\tilde\pi)-\bar\lambda^\gamma\mathbb{E}[\eta_1]+\mu^\gamma\left[  \ln\langle \tilde\pi,R(\eta_1)\rangle+\bar{v}^\gamma_3 (G(\tilde\pi,R(\eta_1)))\right] \\
    &> \ln s(\hat\pi,\tilde\pi)-\bar\lambda^\gamma\mathbb{E}[\eta_1]+\mu^\gamma\left[  \ln\langle \tilde\pi,R(\eta_1)\rangle+\bar{v}^\gamma_4 (G(\tilde\pi,R(\eta_1)))\right]+ B= \bar{v}^\gamma_4(\hat\pi)+B.
\end{align*}
Thus, we get the contradiction as
\[
\bar{v}^\gamma_3(\hat\pi) - \bar{v}^\gamma_4(\hat\pi) > B = \bar{v}^\gamma_3(\hat\pi) - \bar{v}^\gamma_4(\hat\pi).
\]
Then, $\bar{v}^\gamma_3\equiv \bar{v}^\gamma_4$ which concludes the proof.
\end{proof}
\begin{remark}
    Assumption~\eqref{A:growth_gamma} is used in the proof of Theorem~\ref{th:uniqueness} only to show that the maximiser in~\eqref{eq:Bellman_log} is in the interior of $\mathcal{S}$. Alternatively, one can assume that if a particular asset is not used in an optimal strategy, then it is removed from the model.
\end{remark}

\section{Vanishing risk-aversion asymptotics}\label{S:asymptotics}

In this section, we show that the optimal value of the risk-sensitive portfolio optimisation problem converges to the optimal value of the risk-neutral problem as $\gamma \to 0$. This shows that a complex risk-sensitive problem could be approximated by its simpler risk-neutral version for $\gamma$ close to $0$. 

 First, note that in Theorem~\ref{th:existence_concave}, for any $\gamma<0$, we showed the existence of a solution $(\bar{v}^\gamma,\bar\lambda^\gamma)\in \mathcal{C}_b(\mathcal{S})\times \mathbb{R}$ to~\eqref{eq:Bellman_log}. Also, from Theorem~\ref{th:verification}, we get that the constant $\bar\lambda^\gamma$ is the optimal value of the portfolio optimisation problem linked to~\eqref{eq:J(pi)_log}. Thus, using the fact that the entropic utility measure is increasing with respect to $\gamma$, we may define
 \begin{equation}\label{eq:lambda^0}
     \bar\lambda^0:=\lim_{\gamma\to 0}\bar\lambda^\gamma.
 \end{equation}
 In the following theorem, based on $(\bar{v}^\gamma,\bar\lambda^\gamma)$, we find a map $\bar{v}^0$ such that the pair $(\bar{v}^0, \bar\lambda^0)$ is a solution to~\eqref{eq:Bellman_log} with $\gamma=0$.

\begin{theorem}\label{th:gamma_asympt}
    Assume~\eqref{B:increments},~\eqref{B:waiting_times}, and~\eqref{A:integrability_log}. Also, let $\bar\lambda^0$ be given by~\eqref{eq:lambda^0}. Then, there exists $\bar{v}^0\in \mathcal{C}_b(\mathcal{S})$ such that $(\bar{v}^0, \bar\lambda^0)$ is a solution to~\eqref{eq:Bellman_log} with $\gamma=0$.

\end{theorem}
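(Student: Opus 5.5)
Take $\gamma_n\uparrow 0$ and let $(\bar v^{\gamma_n},\bar\lambda^{\gamma_n})$ be the solutions constructed in Theorem~\ref{th:existence_concave}, normalised by $\bar v^{\gamma_n}(\pi_0)=0$. Each $\bar v^{\gamma_n}$ lies in the set $\mathcal{V}$ from~\eqref{eq:K_set}. The bounds there (span at most $\ln\frac1{\bar s}$, Lipschitz constant $L$) do not depend on $\gamma$, so Lemma~\ref{lm:V_compact} and Arzel\`a--Ascoli give a subsequence, not relabelled, with $\bar v^{\gamma_n}\to\bar v^0$ uniformly. The limit lies in $\mathcal{V}\subset\mathcal{C}_b(\mathcal{S})$. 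Also $\bar\lambda^{\gamma_n}\to\bar\lambda^0$ by~\eqref{eq:lambda^0}. To see that this limit is finite, note $\bar\lambda^\gamma\le\bar\lambda^{0'}$, where $\bar\lambda^{0'}$ is given by Theorem~\ref{th:existence_concave} at $\gamma=0$ together with Theorem~\ref{th:verification}. Monotonicity in $\gamma$ gives existence of the limit.

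The goal is to pass to the limit in
\[
\bar v^{\gamma_n}(\pi)=\sup_{\pi'\in\mathcal{S}}\Big(\ln s(\pi,\pi')-\bar\lambda^{\gamma_n}\mathbb{E}[\eta_1]+\Psi^{\gamma_n}_{\bar v^{\gamma_n}}(\pi')\Big),
\]
where $\Psi^\gamma_v$ denotes the map from Lemma~\ref{lm:Feller} with $\gamma$ made explicit. The difference of suprema is bounded by the supremum of the differences. The Lipschitz property in~\eqref{eq:mu_basic} gives $|\Psi^{\gamma}_{v_1}-\Psi^{\gamma}_{v_2}|\le\Vert v_1-v_2\Vert$. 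Hence it suffices to show
\[
\sup_{\pi'\in\mathcal{S}}\big|\Psi^{\gamma_n}_{\bar v^0}(\pi')-\Psi^{0}_{\bar v^0}(\pi')\big|\to0,
\]
that is, uniform convergence in $\pi'$ of $\mu^{\gamma}[Z_{\pi'}]\to\mathbb{E}[Z_{\pi'}]$ for $Z_{\pi'}=\ln\langle\pi',R(\eta_1)\rangle+\bar v^0(G(\pi',R(\eta_1)))$.

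This uniform convergence is the main obstacle. Pointwise convergence $\mu^\gamma[Z]\to\mathbb{E}[Z]$ as $\gamma\uparrow0$ follows from a standard argument. One has $e^{\gamma Z}=1+\gamma Z+O(\gamma^2Z^2e^{|\gamma||Z|})$ on the set where $Z<0$. More robustly, by Jensen $\mu^\gamma[Z]\le\mathbb{E}[Z]$. For the lower bound, fix $\gamma_0<0$ from~\eqref{A:integrability_log}. The envelope~\eqref{eq:lm:Feller:2} gives $Z^-\le \max(-\min_i r_i(\eta_1),0)+\Vert\bar v^0\Vert$, and this bound has exponential moments of order $|\gamma_0|$. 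Then $\frac1\gamma\ln\mathbb{E}[e^{\gamma Z}]$ is monotone in $\gamma$ and converges to $\mathbb{E}[Z]$ by dominated convergence of $(e^{\gamma Z}-1)/\gamma\to Z$. The domination uses $|e^{\gamma z}-1|/|\gamma|\le |z|e^{|\gamma| z^-}$ together with the envelope, whose upper part is integrable by $\mathbb{E}[\max_i r_i(\eta_1)]<\infty$. For $|\gamma|\le|\gamma_0|/2$ the dominating function $|Z|e^{|\gamma_0|Z^-/2}$ is integrable, with a bound independent of $\pi'$. Uniformity then follows by Dini's theorem. The maps $\pi'\mapsto\mu^{\gamma}[Z_{\pi'}]$ are continuous by Lemma~\ref{lm:Feller}, they are monotone in $\gamma$ (since $\mu^\gamma$ is increasing in $\gamma$), and they converge pointwise to the continuous map $\Psi^0_{\bar v^0}$ on the compact set $\mathcal{S}$. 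Therefore the convergence is uniform.

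Combining these pieces, the right-hand side converges uniformly in $\pi$ to the right-hand side of~\eqref{eq:Bellman_RN} with $(\bar v^0,\bar\lambda^0)$, while the left-hand side converges to $\bar v^0(\pi)$. Hence $(\bar v^0,\bar\lambda^0)$ solves~\eqref{eq:Bellman_log} with $\gamma=0$. As a by-product, $\bar v^0\in\mathcal{V}$, so it again has a concave lift.
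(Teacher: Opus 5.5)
Your proposal is correct and follows essentially the same route as the paper: you use the $\gamma$-independent span and Lipschitz bounds to get a uniformly convergent subsequence via Arzel\`a--Ascoli, then the Lipschitz property of $\mu^\gamma$ to replace $\bar v^{\gamma_n}$ by $\bar v^0$, and finally Dini's theorem, relying on continuity from Lemma~\ref{lm:Feller} and monotonicity in $\gamma$, for the uniform convergence $\mu^{\gamma}[Z_{\pi'}]\to\mathbb{E}[Z_{\pi'}]$. The points you add beyond the paper are valid refinements of steps the paper takes for granted: finiteness of $\bar\lambda^0$ by comparison with the risk-neutral optimal value, the Jensen/dominated-convergence squeeze proving the pointwise limit, and the remark that the limit stays in the closed set $\mathcal{V}$ and therefore keeps a concave lift.
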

\begin{proof}

First, let us recall Theorem~\ref{th:existence_concave} and, for any $\gamma<0$, denote by $(\bar{v}^\gamma,\bar\lambda^\gamma)\in \mathcal{C}_b(\mathcal{S})\times \mathbb{R}$ a solution to~\eqref{eq:Bellman_log}. Next, for a fixed $\bar\pi\in \mathcal{S}$, let us define
\[
\hat{v}^\gamma(\pi):=\bar{v}^\gamma(\pi)-\bar{v}^\gamma(\bar\pi), \quad \pi\in \mathcal{S}.
\]
Then, by a direct calculation, we note that $(\hat{v}^\gamma, \bar\lambda^\gamma)$ is also a solution to~\eqref{eq:Bellman_log}. 

Now, using Lemma~\ref{lm:apriori}, we get that the family of functions $(\hat{v}^\gamma)_{\gamma<0}$ is uniformly bounded and equicontinuous. Thus, using the Arzel\`{a}-Ascoli, we may choose a sequence of functions $(\hat{v}^{\gamma_k})$ with $\gamma_k\uparrow 0$ such that $\hat{v}^{\gamma_k}$ converges uniformly to some $\bar{v}^0\in \mathcal{C}_b(\mathcal{S})$. Now, we show that $(\bar{v}^0, \bar\lambda^0)$ is a solution to~\eqref{eq:Bellman_log} with $\gamma=0$. Noting that
\[
    \hat{v}^{\gamma_k}(\pi)+\bar\lambda^{\gamma_k}\mathbb{E}[\eta_1]=\sup_{\pi'\in \mathcal{S}}\mu^{\gamma_k}\left[ \ln s(\pi,\pi')+ \ln\langle\pi',R(\eta_1)\rangle+\hat{v}^{\gamma_k} (G(\pi',R(\eta_1)))\right], \quad \pi\in \mathcal{S},
\]
and using the fact that the left-hand side converges to $\bar{v}^{0}(\pi)+\bar\lambda^{0}\mathbb{E}[\eta_1]$ as $k\to\infty$, it is enough to show that for any $\pi\in \mathcal{S}$, we have
\begin{multline*}
    D_k:=\left|\sup_{\pi'\in \mathcal{S}}\mu^{\gamma_k}\left[ \ln s(\pi,\pi')+ \ln\langle\pi',R(\eta_1)\rangle+\hat{v}^{\gamma_k} (G(\pi',R(\eta_1)))\right] \right.\\
    \left.- \sup_{\pi'\in \mathcal{S}}\mu^{0}\left[ \ln s(\pi,\pi')+ \ln\langle\pi',R(\eta_1)\rangle+\bar{v}^{0} (G(\pi',R(\eta_1)))\right] \right|\to 0, \quad k\to \infty.
\end{multline*}
Note that we have
\begin{multline*}
    D_k\leq \sup_{\pi'\in \mathcal{S}}\left|\mu^{\gamma_k}\left[  \ln\langle\pi',R(\eta_1)\rangle+\hat{v}^{\gamma_k} (G(\pi',R(\eta_1)))\right] \right.\\
    \left.- \mu^{0}\left[ \ln\langle\pi',R(\eta_1)\rangle+\bar{v}^{0} (G(\pi',R(\eta_1)))\right] \right|.
\end{multline*}
Thus, to conclude the proof, it is enough to show
\begin{multline}\label{eq:th:4:1}
    \lim_{k\to\infty} \sup_{\pi'\in \mathcal{S}}|\mu^{\gamma_k}\left[ \ln\langle \pi',R(\eta_1)\rangle+\hat{v}^{\gamma_k} (G(\pi',R(\eta_1)))\right]\\
    -\mu^{\gamma_k}\left[  \ln \langle\pi',R(\eta_1)\rangle+\bar{v}^{0} (G(\pi',R(\eta_1)))\right]|=0
\end{multline}
as well as
\begin{multline}\label{eq:th:4:10}
    \lim_{k\to\infty} \sup_{\pi'\in \mathcal{S}}|\mu^{\gamma_k}\left[  \ln\langle\pi',R(\eta_1)\rangle+\bar{v}^{0} (G(\pi',R(\eta_1)))\right]\\
    -\mathbb{E}\left[ \ln\langle\pi',R(\eta_1)\rangle+\bar{v}^{0} (G(\pi',R(\eta_1)))\right]|=0.
\end{multline}
To see~\eqref{eq:th:4:1}, it is enough to use~\eqref{eq:mu_basic} to get
\begin{multline*}
    \lim_{k\to\infty} \sup_{\pi'\in \mathcal{S}}|\mu^{\gamma_k}\left[  \ln\langle\pi',R(\eta_1)\rangle+\hat{v}^{\gamma_k} (G(\pi',R(\eta_1)))\right]\\
    -\mu^{\gamma_k}\left[\ln\langle\pi',R(\eta_1)\rangle+\bar{v}^{0} (G(\pi',R(\eta_1)))\right]|\leq  \lim_{k\to\infty} \Vert \hat{v}^{\gamma_k}-\bar{v}^0\Vert =0.
\end{multline*}
Next, to get~\eqref{eq:th:4:10}, we recall that $\mu^\gamma[\cdot]$ converges increasingly to $ \mathbb{E}[\cdot]$ as $\gamma\uparrow 0$. Also, using Lemma~\ref{lm:Feller}, we get that the maps
\begin{align*}
    H^k\colon \mathcal{S}\ni\pi'&\to \mu^{\gamma_k}\left[\ln \langle\pi',R(\eta_1)\rangle+ \bar{v}^0(G(\pi',R(\eta_1)))\right],\\
    H^0\colon \mathcal{S}\ni\pi'&\to \mathbb{E}\left[\ln \langle\pi',R(\eta_1)\rangle  + \bar{v}^0(G(\pi',R(\eta_1)))\right]
\end{align*}
are continuous. Thus, using Dini's theorem, we conclude that $H^k$ converges to $H^0$
uniformly with respect to $\pi'\in \mathcal{S}$. This shows~\eqref{eq:th:4:10} and concludes the proof.
\end{proof}

Combining Theorem~\ref{th:gamma_asympt} with Theorem~\ref{th:verification}, we can see that the optimal value of the underlying risk-sensitive optimisation problem converges to the optimal value of the risk-neutral problems as the risk-aversion coefficient converges to $0$. 

\begin{corollary}\label{cor:gamma_asympt_value}
Under the assumptions of Theorem~\ref{th:gamma_asympt}, we have
\begin{align*}
    \sup_{\mathbf\Pi\in \mathcal{A}(\pi)}J^0(\mathbf\Pi,\pi)&=\bar\lambda^0=\lim_{\gamma\to 0}\bar\lambda^\gamma = \lim_{\gamma\to 0}\sup_{\mathbf\Pi\in \mathcal{A}(\pi)}J^\gamma(\mathbf\Pi,\pi).
\end{align*}

\end{corollary}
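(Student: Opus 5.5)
The corollary is a chain of three equalities, and I would check each one separately, citing Theorem~\ref{th:verification} and Theorem~\ref{th:gamma_asympt}.

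\emph{The middle equality.} $\bar\lambda^0=\lim_{\gamma\to0}\bar\lambda^\gamma$ is the definition~\eqref{eq:lambda^0}. I would first confirm that this limit exists and is finite. By Theorem~\ref{th:verification}, for each $\gamma<0$ the number $\bar\lambda^\gamma$ coincides with $\sup_{\mathbf\Pi\in\mathcal{A}(\pi)}J^\gamma(\mathbf\Pi,\pi)$. Since $\gamma\mapsto\mu^\gamma[Z]$ is non-decreasing on $(-\infty,0]$, each $J^\gamma(\mathbf\Pi,\pi)$ is non-decreasing in $\gamma$. Hence $\gamma\mapsto\bar\lambda^\gamma$ is monotone. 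It is bounded above by the risk-neutral value, because $\mu^\gamma\le\mathbb{E}$. That risk-neutral value is finite: apply Theorem~\ref{th:verification} with $\gamma=0$ to the existence result Theorem~\ref{th:existence_concave} at $\gamma=0$. So the limit exists in $\mathbb{R}$.

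\emph{The last equality.} This is just the identity $\bar\lambda^\gamma=\sup_{\mathbf\Pi}J^\gamma(\mathbf\Pi,\pi)$ from Theorem~\ref{th:verification}, applied to the solution $(\bar v^\gamma,\bar\lambda^\gamma)$ of Theorem~\ref{th:existence_concave}, followed by passing to the limit. The left-hand side of this identity does not depend on which solution is chosen, because it equals the optimal value.

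\emph{The first equality.} Theorem~\ref{th:gamma_asympt} provides $\bar v^0\in\mathcal{C}_b(\mathcal{S})$ such that $(\bar v^0,\bar\lambda^0)$ solves~\eqref{eq:Bellman_log} with $\gamma=0$. Theorem~\ref{th:verification} is stated for every $\gamma\le0$ and any solution in $\mathcal{C}_b(\mathcal{S})\times\mathbb{R}$, so applying it to this pair gives $\sup_{\mathbf\Pi\in\mathcal{A}(\pi)}J^0(\mathbf\Pi,\pi)=\bar\lambda^0$.

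\emph{Possible obstacle.} The verification theorem uses a maximiser strategy, and the maximiser must be selected measurably. The objective is continuous in $(\pi,\pi')$ by Lemma~\ref{lm:s_properties} and Lemma~\ref{lm:Feller}, and $\mathcal{S}$ is compact, so a measurable selector exists. This is the same point already covered implicitly in Theorem~\ref{th:verification}. Otherwise the argument is a direct combination of earlier results.
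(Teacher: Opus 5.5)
Your proposal is correct and follows the paper's route. The corollary comes from applying Theorem~\ref{th:verification} to the pair $(\bar v^0,\bar\lambda^0)$ from Theorem~\ref{th:gamma_asympt} and to the pairs $(\bar v^\gamma,\bar\lambda^\gamma)$, $\gamma<0$, with the middle equality being the definition~\eqref{eq:lambda^0}; your check that this monotone limit is finite, via $\bar\lambda^\gamma\le\sup_{\mathbf\Pi}J^0(\mathbf\Pi,\pi)<\infty$, makes explicit a detail the paper leaves implicit.
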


If we additionally assume~\eqref{A:support} and~\eqref{A:growth_gamma}, we can show that $\bar{v}^0$ is a uniform limit of $\bar{v}^\gamma$.

\begin{corollary}\label{cor:uniform_convergence}
    Assume~\eqref{B:increments},~\eqref{B:waiting_times},~\eqref{A:integrability_log},~\eqref{A:support}, and~\eqref{A:growth_gamma}. Also for any $\gamma\leq 0$, let $(\bar{v}^\gamma, \bar\lambda^\gamma)\in\mathcal{C}_b(\mathcal{S})\times \mathbb{R}$ be the solutions to~\eqref{eq:Bellman_log} with concave lifts and such that $\bar{v}^\gamma(\bar\pi)=0$ for a fixed $\bar\pi \in \mathcal{S}$. Then, we have
    \[
    \lim_{\gamma\uparrow 0}\Vert \bar{v}^\gamma - \bar{v}^0\Vert =0.
    \]
\end{corollary}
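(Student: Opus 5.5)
The plan is a standard compactness-plus-uniqueness argument: every sequence $\gamma_k\uparrow 0$ will have a subsequence along which $\bar v^{\gamma_k}$ converges uniformly to $\bar v^0$, and that forces convergence of the whole family. We read $\bar v^0$ as the solution for $\gamma=0$ with concave lift and $\bar v^0(\bar\pi)=0$. By Theorem~\ref{th:uniqueness} applied with $\gamma=0$, this function is uniquely determined, since the constant $\bar\lambda^0$ is unique by Theorem~\ref{th:verification}.

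\textbf{Step 1: compactness.} Take any sequence $\gamma_k\uparrow 0$. We first bound the family $(\bar v^{\gamma_k})$ uniformly. Each $\bar v^{\gamma_k}$ solves \eqref{eq:fixed_point}, so $\bar v^{\gamma_k}=\mathcal{T}\bar v^{\gamma_k}-\bar\lambda^{\gamma_k}\mathbb{E}[\eta_1]$. Lemma~\ref{lm:apriori} then gives $\spann\bar v^{\gamma_k}\le \ln\frac1{\bar s}$ and a Lipschitz bound with constant $L$. Combined with $\bar v^{\gamma_k}(\bar\pi)=0$, this shows the family is uniformly bounded and equicontinuous. By Arzel\`a--Ascoli we extract a subsequence, still denoted $\gamma_k$, with $\bar v^{\gamma_k}\to w$ uniformly for some $w\in\mathcal{C}_b(\mathcal{S})$ with $w(\bar\pi)=0$.

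\textbf{Step 2: identifying the limit.} We show that $w$ is a solution for $\gamma=0$. The argument repeats the proof of Theorem~\ref{th:gamma_asympt} verbatim:
\begin{itemize}
\item $\bar\lambda^{\gamma_k}\to\bar\lambda^0$ by \eqref{eq:lambda^0};
\item the perturbation term vanishes, by \eqref{eq:th:4:1} with $\bar v^0$ replaced by $w$;
\item the passage $\mu^{\gamma_k}\to\mathbb{E}$ is uniform in $\pi'$, by Dini's theorem as in \eqref{eq:th:4:10}.
\end{itemize}
Together these give that $(w,\bar\lambda^0)$ solves \eqref{eq:Bellman_RN}.

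We also need the lift of $w$ to be concave. By Lemma~\ref{lm:V_compact}, concavity of lifts is preserved under uniform convergence, because $\mathcal{W}_{v_n}\to\mathcal{W}_v$ pointwise and pointwise limits of concave maps are concave. So $\mathcal{W}_w$ is concave.

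\textbf{Step 3: uniqueness and conclusion.} Now $w$ and $\bar v^0$ both solve the $\gamma=0$ Bellman equation with concave lifts and the same constant $\bar\lambda^0$. Theorem~\ref{th:uniqueness} gives $w=\bar v^0+D$. Evaluating at $\bar\pi$ yields $D=0$, so $w=\bar v^0$. Since every sequence $\gamma_k\uparrow0$ has a subsequence with this same limit, the full family converges: $\Vert\bar v^\gamma-\bar v^0\Vert\to0$.

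The only delicate point is ensuring that Theorem~\ref{th:uniqueness} applies at $\gamma=0$. Its hypotheses hold because \eqref{A:growth_gamma} is assumed for all $\gamma\le0$, and Proposition~\ref{pr:interior} covers $\gamma=0$. One must also check that $\bar v^0$ in the statement is normalised at $\bar\pi$ and has a concave lift. If $\bar v^0$ instead denotes the function built in Theorem~\ref{th:gamma_asympt}, its lift is concave by the same closure argument as in Step 2, and it may be renormalised at $\bar\pi$.
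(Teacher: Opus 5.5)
Your proposal is correct and follows the paper's compactness-plus-uniqueness route: an Arzel\`{a}--Ascoli extraction as in the proof of Theorem~\ref{th:gamma_asympt}, identification of every subsequential limit via Theorem~\ref{th:uniqueness} at $\gamma=0$, and a sub-subsequence argument for the full family. You also check explicitly that the limit inherits a concave lift (needed to invoke Theorem~\ref{th:uniqueness}) and observe that only uniqueness at $\gamma=0$ is required; the paper's short proof leaves both points implicit.
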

\begin{proof}
    First, note that by Theorem~\ref{th:uniqueness}, the maps $ \bar{v}^\gamma$ are uniquely defined. Also, from the proof of Theorem~\ref{th:gamma_asympt} we conclude that from the sequence $ \bar{v}^{\gamma_k}$ we may choose a uniformly convergent subsequence to $\bar{v}^0$. As the limit is also uniquely defined, the convergence is uniform along the original sequence, which concludes the proof.
\end{proof}

Now, we show that a strategy which is optimal for the risk-neutral problem is also almost optimal for the risk-sensitive problem. Note that the argument does not use~\eqref{A:support} nor~\eqref{A:growth_gamma}. 

\begin{theorem}\label{th:strategy_gamma_asympt}
 Assume~\eqref{B:increments},~\eqref{B:waiting_times}, and~\eqref{A:integrability_log}. Let $\hat{\mathbf\Pi}^0=(\hat{\pi}^0(t))_{t\geq 0}$ be an optimal strategy for $J^0$ induced by the Bellman equation~\eqref{eq:Bellman_log} with $\gamma=0$. Then, we have
    \begin{equation}
        \lim_{\gamma\uparrow 0}\sup_{\pi \in \mathcal{S}}|J^\gamma(\hat{\mathbf\Pi}^0,\pi)-\sup_{\mathbf\Pi\in \mathcal{A}(\pi)}J^\gamma(\mathbf\Pi,\pi)|=0.
    \end{equation}
\end{theorem}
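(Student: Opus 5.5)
Since $\sup_{\mathbf\Pi\in\mathcal{A}(\pi)}J^\gamma(\mathbf\Pi,\pi)=\bar\lambda^\gamma$ by Theorem~\ref{th:verification} and $\bar\lambda^\gamma\to\bar\lambda^0$ by Corollary~\ref{cor:gamma_asympt_value}, it suffices to show that $J^\gamma(\hat{\mathbf\Pi}^0,\pi)\to\bar\lambda^0$ uniformly in $\pi\in\mathcal{S}$.

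For the upper bound, optimality of $\bar\lambda^\gamma$ gives $J^\gamma(\hat{\mathbf\Pi}^0,\pi)\le\bar\lambda^\gamma$, and the right-hand side tends to $\bar\lambda^0$ independently of $\pi$. The main work is the lower bound: we need $J^\gamma(\hat{\mathbf\Pi}^0,\pi)\ge\bar\lambda^0-\delta(\gamma)$ with $\delta(\gamma)\to0$ uniformly in $\pi$.

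To obtain it, I will evaluate the risk-neutral Bellman equation~\eqref{eq:Bellman_RN} along the strategy $\hat{\mathbf\Pi}^0$, but apply $\mu^\gamma$ instead of the expectation. For $\pi'\in\mathcal{S}$ define the defect
\[
\varepsilon(\gamma):=\sup_{\pi'\in\mathcal{S}}\Bigl(\mathbb{E}\bigl[Z_{\pi'}\bigr]-\mu^\gamma\bigl[Z_{\pi'}\bigr]\Bigr)\ge0,\qquad Z_{\pi'}:=\ln\langle\pi',R(\eta_1)\rangle+\bar v^0(G(\pi',R(\eta_1))).
\]
By the Dini argument in the proof of Theorem~\ref{th:gamma_asympt}, i.e.\ monotone convergence of $\mu^\gamma$ to $\mathbb{E}$ as $\gamma\uparrow0$ together with the continuity of both maps from Lemma~\ref{lm:Feller}, we have $\varepsilon(\gamma)\to0$. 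Since $\hat\pi^0(\tau_n)$ attains the supremum in~\eqref{eq:Bellman_RN}, this gives, for every pre-transaction state,
\[
\bar v^0(\hat\pi^0(\tau_n^-))+\bar\lambda^0\mathbb{E}[\eta_1]-\varepsilon(\gamma)\le\ln s(\hat\pi^0(\tau_n^-),\hat\pi^0(\tau_n))+\mu^\gamma\bigl[Z_{\hat\pi^0(\tau_n)}\bigr].
\]

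Next, iterate this inequality backwards over $n$ periods, as in the proof of Theorem~\ref{th:verification}. The ingredients are Assumption~\eqref{B:increments} (independence and stationarity of the increments, together with independence of $(\eta_i)$, so that one-step conditional entropic utilities make sense), the tower-type property of the entropic utility $\mu^\gamma[Z]=\mu^\gamma[\mu^\gamma[Z\mid\mathcal{F}_{\tau_{n-1}}]]$ (time consistency), cash-additivity and monotonicity from~\eqref{eq:mu_basic}, and Lemma~\ref{lm:telescope}. The result is
\[
\mu^\gamma\Bigl[\sum_{i=0}^{n-1}\bigl(\ln s(\hat\pi^0(\tau_i^-),\hat\pi^0(\tau_i))+\ln\langle\hat\pi^0(\tau_i),R(\tau_i,\tau_{i+1})\rangle\bigr)+\bar v^0(\hat\pi^0(\tau_n^-))\Bigr]\ge\bar v^0(\pi)+n\bigl(\bar\lambda^0\mathbb{E}[\eta_1]-\varepsilon(\gamma)\bigr).
\]
Because $\bar v^0$ is bounded, we can remove it with an error of at most $2\Vert\bar v^0\Vert$. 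Dividing by $\mathbb{E}[\tau_n]=n\mathbb{E}[\eta_1]$ and letting $n\to\infty$ then yields
\[
J^\gamma(\hat{\mathbf\Pi}^0,\pi)\ge\bar\lambda^0-\frac{\varepsilon(\gamma)}{\mathbb{E}[\eta_1]},
\]
and this bound is uniform in $\pi$.

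Combining the two bounds, $|J^\gamma(\hat{\mathbf\Pi}^0,\pi)-\bar\lambda^\gamma|\le|\bar\lambda^\gamma-\bar\lambda^0|+\varepsilon(\gamma)/\mathbb{E}[\eta_1]\to0$, which proves the claim. The step I expect to need the most care is the backward iteration: the conditional step requires that, given $\mathcal{F}_{\tau_i}$, the next-period variable equals $Z_{\hat\pi^0(\tau_i)}$ evaluated at an independent copy of $R(\eta_1)$, and that the uniform defect bound holds pathwise for this random $\hat\pi^0(\tau_i)$. This is where the uniformity of $\varepsilon(\gamma)$ in $\pi'$, obtained from Dini's theorem, is essential. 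Measurability of $\hat\pi^0$ is taken as given, since it is part of the hypothesis that $\hat{\mathbf\Pi}^0$ is a well-defined strategy.
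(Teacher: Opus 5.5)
Your proposal is correct and follows essentially the same route as the paper: reduce to $J^\gamma(\hat{\mathbf\Pi}^0,\pi)\to\bar\lambda^0$ uniformly in $\pi$, use Dini's theorem to obtain a uniform defect between $\mathbb{E}$ and $\mu^\gamma$, and iterate the risk-neutral Bellman equation along $\hat{\mathbf\Pi}^0$ using the tower property of the entropic utility. Your bookkeeping is slightly cleaner than the paper's: you get the lower bound $\bar\lambda^0-\varepsilon(\gamma)/\mathbb{E}[\eta_1]$ where the paper writes $\bar\lambda^0-2d^\gamma$, and you use $\bar\lambda^\gamma$ rather than $\bar\lambda^0$ as the upper bound, but the argument is the same.
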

\begin{proof}
    Recall that by Theorem~\ref{th:verification}, for any $\pi \in \mathcal{S}$, we have $\sup_{\mathbf\Pi\in \mathcal{A}(\pi)}J^\gamma(\mathbf\Pi,\pi)=\bar\lambda^\gamma$, where $\bar\lambda^\gamma$ is from Equation~\eqref{eq:Bellman_log}. Also, using Theorem~\ref{th:gamma_asympt} and Theorem~\ref{th:verification}, we get 
    \[
    \lim_{\gamma\uparrow 0}\bar\lambda^\gamma = \bar\lambda^0 = \sup_{\mathbf\Pi\in \mathcal{A}(\pi)}J^0(\mathbf\Pi,\pi) = J^0(\hat{\mathbf\Pi}^0,\pi).
    \]
    Thus, noting that
\begin{multline*}
        \sup_{\pi \in \mathcal{S}}|J^\gamma(\hat{\mathbf\Pi}^0,\pi)-\sup_{\mathbf\Pi\in \mathcal{A}(\pi)}J^\gamma(\mathbf\Pi,\pi)| = \sup_{\pi \in \mathcal{S}}|J^\gamma(\mathbf{\hat\Pi}^0,\pi)-\bar\lambda^\gamma|\\
        \leq \sup_{\pi \in \mathcal{S}}|J^\gamma(\mathbf{\hat\Pi}^0,\pi)-\bar\lambda^0|+|\bar\lambda^\gamma - \bar\lambda^0|,
\end{multline*}
    to conclude the proof, it is enough to show
\begin{equation}\label{eq:th:strategy_gamma_asympt:1}
        \lim_{\gamma\uparrow 0}\sup_{\pi \in \mathcal{S}}|J^\gamma(\hat{\mathbf\Pi}^0,\pi)-\bar\lambda^0|=0.
\end{equation}

    We start by noting that, by Lemma~\ref{lm:Feller}, for any $\gamma<0$, the maps
    \begin{align*}
        \mathcal{S}\ni &\pi\to \mathbb{E}\left[\ln \langle\pi',R(\eta_1)\rangle + \bar{v}^0(G(\pi',R(\eta_1))) \right], \\
        \mathcal{S}\ni &\pi\to \mu^\gamma\left[\ln \langle\pi',R(\eta_1)\rangle + \bar{v}^0(G(\pi',R(\eta_1))) \right]
    \end{align*}
    are continuous. Also, since $\mu^\gamma[\cdot]$ converges increasingly to $\mathbb{E}[\cdot]$ as $\gamma\uparrow 0$, by Dini's theorem, for any $\gamma<0$, there exists $d^\gamma\in \mathbb{R}$ such that
\begin{multline}\label{eq:th:strategy_gamma_asympt:2}
        \sup_{\pi'\in \mathcal{S}}\left|\mathbb{E}\left[\ln \langle\pi',R(\eta_1)\rangle + \bar{v}^0(G(\pi',R(\eta_1))) \right] \right. \\
        \left.- \mu^\gamma\left[\ln \langle\pi',R(\eta_1)\rangle + \bar{v}^0(G(\pi',R(\eta_1))) \right] \right|\leq d^\gamma,
    \end{multline}
    and $\lim_{\gamma\uparrow 0}d^\gamma=0$. Next, using Equation~\eqref{eq:Bellman_log} with $\gamma=0$, for any $\pi=\pi(0-)\in \mathcal{S}$, we  obtain
    \begin{align}\label{eq:th:strategy_gamma_asympt:3}
        \bar{v}^0(\pi(0-))+\bar\lambda^0 \mathbb{E}[\eta_1] &= \ln s(\pi(0-),\hat{\pi}^0(0))\nonumber\\
        & \phantom{=}+\mathbb{E}\left[\ln \langle\hat{\pi}^0(0),R(\eta_1)\rangle +\bar{v}^0(G(\hat{\pi}^0(0),R(\eta_1))) \right] \nonumber\\
        & \leq \ln s(\pi(0-),\hat{\pi}^0(0))\nonumber\\
        &\phantom{=}+\mu^\gamma\left[\ln \langle\hat{\pi}^0(0),R(\eta_1)\rangle + \bar{v}^0(G(\hat{\pi}^0(0),R(\eta_1))) \right]+ d^\gamma.
    \end{align}
    Also, recalling that $\pi(\tau_1^-) = G(\hat{\pi}^0(0),R(\eta_1))$ and using Equation~\eqref{eq:Bellman_log} again, by the law of total expectation for the entropic utility measure, we get
    \begin{align*}
        \mu^\gamma&\left[\ln \langle\hat{\pi}^0(0),R(\tau_1)\rangle + \bar{v}^0(\pi(\tau_1^-)) \right] +\bar\lambda^0  \mathbb{E}[\eta_1]\\
        &= \mu^\gamma\left[\ln \langle\hat{\pi}^0(0),R(\tau_1)\rangle  \right.\\
        &\phantom{=}\left.+ \mathbb{E}\left[\ln s(\pi(\tau_1^-),\hat{\pi}^0(\tau_1))+\ln \langle\hat{\pi}^0(\tau_1),R(\tau_1)\rangle+\bar{v}^0(G(\hat{\pi}^0(\tau_1),R(\tau_1))) |\pi(\tau_1-)\right]  \right]\\
        &\leq \mu^\gamma\left[\ln \langle\hat{\pi}^0(0),R(\tau_1)\rangle+\ln s(\pi(\tau_1^-),\hat{\pi}^0(\tau_1))\right.\\
        &\phantom{=}\left.+ \mu^\gamma\left[\ln \langle\hat{\pi}^0(\tau_1),R(\tau_1)\rangle+\bar{v}^0(G(\hat{\pi}^0(\tau_1),R(\tau_1))) |\pi(\tau_1^-)\right]  \right] + d^\gamma\\
        & = \mu^\gamma\left[\ln \langle\hat{\pi}^0(0),R(\tau_1)\rangle 
        + \ln s(\pi(\tau_1-),\hat{\pi}^0(\tau_1))\right.\\
        &\phantom{=}\left.+\ln \langle\hat{\pi}^0(\tau_1),R(\tau_1,\tau_2)\rangle+\bar{v}^0(G(\hat{\pi}^0(\tau_1),R(\tau_1,\tau_2))) \right]  + d^\gamma\\
        & = \mu^\gamma\left[\ln \langle\hat{\pi}^0(0),R(\tau_1)\rangle  
        + \ln s(\pi(\tau_1-),\hat{\pi}^0(\tau_1))\right.\\
        &\phantom{=}\left.+\ln \langle\hat{\pi}^0(\tau_1),R(\tau_1,\tau_2)\rangle+\bar{v}^0(\pi(\tau_2-)) \right]  + d^\gamma.
    \end{align*}
    Consequently, using~\eqref{eq:th:strategy_gamma_asympt:3}, we have
    \begin{multline*}
        \bar{v}^0(\pi(0-))+2\bar\lambda^0 \mathbb{E}[\eta_1] \\
        \leq  \mu^\gamma\left[\sum_{i=0}^1 \left( \ln s(\pi(\tau_i^-),\hat{\pi}^0(\tau_i)) +\ln \langle\pi(\tau_i ),R(\tau_i,\tau_{i+1})\rangle \right) +\bar{v}^0(\pi(\tau_2-))\right]+2d^\gamma.
    \end{multline*}
    Repeating this argument, for any $n\geq 2$, we obtain
        \begin{multline*}
        \bar{v}^0(\pi(0-))+n\bar\lambda^0  \mathbb{E}[\eta_1]\\
        \leq  \mu^\gamma\left[\sum_{i=0}^{n-1} \left( \ln s(\pi(\tau_i^-),\hat{\pi}^0(\tau_i)) +\ln \langle\hat{\pi}^0(\tau_i ),R(\tau_i,\tau_{i+1})\rangle \right)  +\bar{v}^0(\pi(\tau_n^-))\right]+2nd^\gamma.
    \end{multline*}
    Thus, dividing both sides by $n \mathbb{E}[\eta_1]=\mathbb{E}[\tau_n]$ and using the fact that $\bar{v}^0$ is bounded, we get
    \[
    \bar\lambda^0 \leq J^\gamma(\hat{\mathbf\Pi}^0, \pi) + 2d^\gamma.
    \]
    Consequently, we obtain
    \[
    \bar\lambda^0 - 2d^\gamma \leq J^\gamma(\hat{\mathbf\Pi}^0, \pi) \leq \bar\lambda^0.
    \]
    This combined with~\eqref{eq:th:strategy_gamma_asympt:1} and $\lim_{\gamma\uparrow 0} d^\gamma=0$, concludes the proof.
\end{proof}

Now, we complement the convergence of optimal values with the convergence of optimal strategies. To get the result, we need to amplify Assumption~\eqref{A:integrability_log} to the following condition.
\begin{enumerate}
\item[(\namedlabel{A:integrability_log_2}{$\mathcal{A}3'$})] (Integrability).  For any $\gamma<0$, we have
\[
\mathbb{E}\left[e^{\gamma  \min_{i=1, \ldots, d}r_i(\eta_1)}\right]<\infty ,\qquad\text{and}\qquad \mathbb{E}\Big[(\max_{i=1,\ldots,d}r_i(\eta_1))^2\Big]<\infty .
\]
\end{enumerate}
It should be noted that Assumption~\eqref{A:integrability_log_2} simply adds square integrability of $\max\limits_{i=1,\ldots,d}r_i(\eta_1)$.

\begin{theorem}\label{th:gamma_asympt_strategy} Assume~\eqref{B:increments},~\eqref{B:waiting_times},~\eqref{A:integrability_log_2},~\eqref{A:support}, and~\eqref{A:growth_gamma}. Also, let $\pi\in \mathcal{S}$ and, for any $\gamma\leq 0$, let $\pi^\gamma=\pi^\gamma(\pi)$ be the maximiser in~\eqref{eq:Bellman_log}. Then, we have $\pi^\gamma\to \pi^0$ as $\gamma\uparrow 0$.
\end{theorem}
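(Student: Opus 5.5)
The plan is a compactness--uniqueness argument combining Corollary~\ref{cor:uniform_convergence} with the uniqueness of the maximiser at $\gamma=0$ (Theorem~\ref{th:unique_maximiser}). Fix the normalisation $\bar v^\gamma(\bar\pi)=0$. Under the present assumptions, Theorems~\ref{th:existence_concave} and~\ref{th:uniqueness} give, for each $\gamma\le 0$, a unique normalised solution $\bar v^\gamma$ with concave lift. Corollary~\ref{cor:uniform_convergence} gives $\Vert \bar v^\gamma-\bar v^0\Vert\to0$ as $\gamma\uparrow0$. Before using it, I would check that the limit $\bar v^0$ has a concave lift: this holds because each $\mathcal{W}_{\hat v^{\gamma_k}}$ is concave and pointwise limits of concave maps are concave, as in Lemma~\ref{lm:V_compact}. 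Hence Theorem~\ref{th:unique_maximiser} applies at $\gamma=0$ and $\pi^0$ is well defined. Setting $J^\gamma_\pi(\pi'):=\ln s(\pi,\pi')+\Psi^\gamma_{\bar v^\gamma}(\pi')$, where the superscript records the $\gamma$ in $\mu^\gamma$, the maximisers $\pi^\gamma$ are exactly the argmax points of $J^\gamma_\pi$.

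The key step is to show $\sup_{\pi'\in\mathcal{S}}|J^\gamma_\pi(\pi')-J^0_\pi(\pi')|\to0$. Split the difference as
\[
|\Psi^\gamma_{\bar v^\gamma}(\pi')-\Psi^\gamma_{\bar v^0}(\pi')|+|\Psi^\gamma_{\bar v^0}(\pi')-\Psi^0_{\bar v^0}(\pi')|.
\]
The first term is bounded by $\Vert\bar v^\gamma-\bar v^0\Vert$, by the Lipschitz property in~\eqref{eq:mu_basic}. The second term tends to zero uniformly in $\pi'$ by Dini's theorem, exactly as in~\eqref{eq:th:4:10}. That step uses monotonicity of $\mu^\gamma$ in $\gamma$, continuity from Lemma~\ref{lm:Feller}, and pointwise convergence $\mu^\gamma[Z]\to\mathbb{E}[Z]$.

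The pointwise convergence is where I expect to need care, and it is presumably why~\eqref{A:integrability_log_2} is assumed. I would justify it via $\frac{1}{\gamma}\ln\mathbb{E}[e^{\gamma Z}]\to \mathbb{E}[Z]$ for $Z$ bounded below by an exponentially integrable variable and above by a square-integrable one. Concretely, I would use the expansion $e^{\gamma z}\le 1+\gamma z+\frac{\gamma^2}{2}z^2$ on $z\ge0$ together with dominated convergence on $\{z<0\}$, where $e^{\gamma Z}$ is dominated through~\eqref{A:integrability_log} for a slightly more negative $\gamma$. The square integrability of $\max_i r_i(\eta_1)$ gives a quantitative bound $\mathbb{E}[Z]-\mu^\gamma[Z]\le C|\gamma|$ uniform in $\pi'$. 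This bound could replace Dini altogether, since $Z_{\pi'}$ is sandwiched as in~\eqref{eq:lm:Feller:2} by envelopes independent of $\pi'$.

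Finally, conclude by the standard argmax stability argument, as in the continuity part of Theorem~\ref{th:unique_maximiser}. Take $\gamma_k\uparrow0$ and suppose $\pi^{\gamma_k}\not\to\pi^0$. By compactness of $\mathcal{S}$, pass to a subsequence with $\pi^{\gamma_k}\to\tilde\pi\neq\pi^0$. For any $\pi'$ we have $J^{\gamma_k}_\pi(\pi^{\gamma_k})\ge J^{\gamma_k}_\pi(\pi')$. Uniform convergence together with continuity of $J^0_\pi$ (Lemmas~\ref{lm:s_properties} and~\ref{lm:Feller}) yields $J^0_\pi(\tilde\pi)\ge J^0_\pi(\pi')$. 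So $\tilde\pi$ maximises $J^0_\pi$, contradicting the uniqueness from Theorem~\ref{th:unique_maximiser}. The main obstacle is the uniform-in-$\pi'$ convergence $\Psi^\gamma_{\bar v^0}\to\Psi^0_{\bar v^0}$, together with verifying that $\bar v^0$ inherits the concave lift needed for uniqueness of $\pi^0$.
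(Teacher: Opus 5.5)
Your proposal is correct, but it takes a different route from the paper. The paper fixes a limit point $\bar\pi$ of $(\pi^{\gamma_k})$ and proves $F^{\gamma_k}(\pi^{\gamma_k},\bar v^{\gamma_k})\to F^0(\bar\pi,\bar v^0)$ via a three-term split. Its middle term $|\mu^{\gamma_k}[X_k]-\mu^{\gamma_k}[Y]|$, where the moving point $\pi^{\gamma_k}$ sits inside $\mu^{\gamma_k}$, is controlled by the relative-entropy representation of $\mu^\gamma$ with tilted measures and dominated convergence. That is exactly where the square integrability in \eqref{A:integrability_log_2} enters, through Cauchy--Schwarz in \eqref{eq:th:gamma_asympt:Hexp}. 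You instead prove $\sup_{\pi'}|J^\gamma_\pi(\pi')-J^0_\pi(\pi')|\to0$ from Corollary~\ref{cor:uniform_convergence} and the Dini argument behind \eqref{eq:th:4:10}. You then use continuity of $J^0_\pi$ in the standard argmax-stability argument, so the moving point is absorbed into $\sup_{\pi'}|\Psi^{\gamma_k}_{\bar v^0}-\Psi^{0}_{\bar v^0}|+|J^0_\pi(\pi^{\gamma_k})-J^0_\pi(\tilde\pi)|$. This needs only \eqref{A:integrability_log}, so your route shows that \eqref{A:integrability_log_2} is superfluous for this theorem. The paper's Step 2 does avoid monotonicity in $\gamma$, but Dini is already used to build $\bar v^0$ in Theorem~\ref{th:gamma_asympt}, so your version is the more economical one. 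Your explicit check that $\bar v^0$ has a concave lift also fills in a point the paper leaves implicit.

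Accordingly, your guess about where extra integrability is needed is off. The convergence $\mu^\gamma[Z]\to\mathbb{E}[Z]$ already holds under \eqref{A:integrability_log}, since $(e^{\gamma Z}-1)/\gamma\uparrow Z$ with an integrable lower bound from a more negative $\gamma$. Your optional rate $\mathbb{E}[Z]-\mu^\gamma[Z]\le C|\gamma|$ is valid uniformly in $\pi'$ under \eqref{A:integrability_log_2}. However, on $\{Z<0\}$ it requires a remainder bound such as $e^{\gamma z}\le 1+\gamma z+\tfrac{\gamma^2}{2}z^2e^{\gamma z}$ (together with $\ln(1+x)\le x$), not dominated convergence, which gives no rate. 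In any case this rate is unnecessary once Dini is available.
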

\begin{proof}
First, note that by Theorem~\ref{th:uniqueness} and Theorem~\ref{th:unique_maximiser}, for any $\gamma\leq 0$, the solutions to the Bellman equations are unique (up to a constant) and admit unique maximisers. Let $\gamma_k\uparrow 0$ be any sequence such that $\pi^{\gamma_k}\to\bar\pi$ as $k\to \infty$ for some $\bar\pi \in \mathcal{S}$; such a sequence exists by the compactness of $\mathcal{S}$. Since the maximiser for $\gamma=0$ is unique, see Theorem~\ref{th:unique_maximiser}, it is enough to show that $\bar\pi=\pi^0$, i.e.\ that
\begin{equation}\label{eq:th:gamma_asympt:1}
        \bar{v}^0(\pi)+\bar\lambda^0 \mathbb{E}[\eta_1]= \ln s(\pi,\bar\pi)+\mathbb{E}\left[\ln \langle\bar\pi,R(\eta_1)\rangle+\bar{v}^0(G(\bar\pi,R(\eta_1)))\right].
    \end{equation}
    Indeed, then $\bar\pi$ is a maximiser in~\eqref{eq:Bellman_log} for $\gamma=0$, hence $\bar\pi=\pi^0$ by uniqueness; and, as every convergent subsequence of $(\pi^{\gamma_k})$ has the same limit $\pi^0$ and $\mathcal{S}$ is compact, this yields $\pi^\gamma\to \pi^0$ as $\gamma\uparrow 0$. 

    By definition, we have
    \begin{equation}\label{eq:th:gamma_asympt:2}
        \bar{v}^{\gamma_k}(\pi)+\bar\lambda^{\gamma_k}\mathbb{E}[\eta_1] = \ln s(\pi,\pi^{\gamma_k})+\mu^{\gamma_k}\left[\ln \langle\pi^{\gamma_k},R(\eta_1)\rangle+\bar{v}^{\gamma_k}(G(\pi^{\gamma_k},R(\eta_1)))\right].
    \end{equation}
    From the proof of Theorem~\ref{th:gamma_asympt}, by passing to a subsequence if needed, it can be assumed that $\bar{v}^{\gamma_k}$ converges uniformly to $\bar{v}^0$ as $k\to\infty$. Also, using Corollary~\ref{cor:gamma_asympt_value} and the continuity of $s$, we have $\bar\lambda^{\gamma_k}\to \bar\lambda^0$ and $\ln s(\pi,\pi^{\gamma_k})\to \ln s(\pi,\bar\pi)$ as $k\to \infty$. Next, to ease the notation, for any $\gamma<0$, $\pi'\in \mathcal{S}$, and $h\in \mathcal{C}_b(\mathcal{S})$, we set
    \begin{align*}
        F^\gamma(\pi',h)&:=\mu^{\gamma}\left[\ln \langle\pi',R(\eta_1)\rangle+h(G(\pi',R(\eta_1)))\right],\\
        F^0(\pi',h)&:=\mathbb{E}\left[\ln \langle\pi',R(\eta_1)\rangle+h(G(\pi',R(\eta_1)))\right].
    \end{align*}
    Thus, in view of~\eqref{eq:th:gamma_asympt:2}, to prove~\eqref{eq:th:gamma_asympt:1} it is enough to show
\begin{equation}\label{eq:th:gamma_asympt:3}
|F^{\gamma_k}(\pi^{\gamma_k},\bar{v}^{\gamma_k}) - F^{0}(\bar\pi,\bar{v}^{0})|\to 0, \quad k\to \infty.
\end{equation}
Now, we have
\begin{align}\label{eq:th:gamma_asympt:split}
    |F^{\gamma_k}(\pi^{\gamma_k},\bar{v}^{\gamma_k}) - F^{0}(\bar\pi,\bar{v}^{0})|&\leq |F^{\gamma_k}(\pi^{\gamma_k},\bar{v}^{\gamma_k}) - F^{\gamma_k}(\pi^{\gamma_k},\bar{v}^{0})| \nonumber\\
    &\phantom{\leq}+ |F^{\gamma_k}(\pi^{\gamma_k},\bar{v}^{0}) -F^{\gamma_k}(\bar\pi,\bar{v}^{0}) |\\
    & \phantom{\leq}+ |F^{\gamma_k}(\bar\pi,\bar{v}^{0}) - F^{0}(\bar\pi,\bar{v}^{0})|,\nonumber
\end{align}
and we show that each of the three terms on the right-hand side tends to $0$ as $k\to \infty$. For transparency, we split remaining argument into three steps: (1) proof that $|F^{\gamma_k}(\pi^{\gamma_k},\bar{v}^{\gamma_k}) - F^{\gamma_k}(\pi^{\gamma_k},\bar{v}^{0})|\to 0$ as $k\to\infty$; (2) proof that $|F^{\gamma_k}(\pi^{\gamma_k},\bar{v}^{0}) -F^{\gamma_k}(\bar\pi,\bar{v}^{0}) |\to 0$ as $k\to\infty$; (3) proof that $|F^{\gamma_k}(\bar\pi,\bar{v}^{0}) - F^{0}(\bar\pi,\bar{v}^{0})|\to 0$ as $k\to\infty$.

\medskip
\textsc{Step 1.} We show that $|F^{\gamma_k}(\pi^{\gamma_k},\bar{v}^{\gamma_k}) - F^{\gamma_k}(\pi^{\gamma_k},\bar{v}^{0})|\to 0$ as $k\to\infty$. Recalling that $\bar{v}^{\gamma_k}$ converges uniformly to $\bar{v}^0$ and using the translation invariance and the monotonicity of $\mu^{\gamma_k}$, see~\eqref{eq:mu_basic}, we obtain
\[
|F^{\gamma_k}(\pi^{\gamma_k},\bar{v}^{\gamma_k}) - F^{\gamma_k}(\pi^{\gamma_k},\bar{v}^{0})|\leq \Vert\bar{v}^{\gamma_k}-\bar{v}^{0}\Vert \to 0, \quad k\to \infty,
\]
which concludes the proof of this step.

\medskip
\textsc{Step 2.} We show that $|F^{\gamma_k}(\pi^{\gamma_k},\bar{v}^{0}) -F^{\gamma_k}(\bar\pi,\bar{v}^{0}) |\to 0$ as $k\to\infty$. Before proceeding, let us record an integrability bound used repeatedly. Since $R_i(\eta_1)=e^{r_i(\eta_1)}$, for any $\pi'\in \mathcal{S}$ we have $\langle\pi',R(\eta_1)\rangle\in [e^{\min_{i}r_i(\eta_1)},e^{\max_{i}r_i(\eta_1)}]$, so that, for any $h\in \mathcal{C}_b(\mathcal{S})$ with $\Vert h\Vert\leq \ln\tfrac{1}{\bar s}$, we obtain
\begin{equation}\label{eq:th:gamma_asympt:envelope}
    \left|\ln \langle\pi',R(\eta_1)\rangle+h(G(\pi',R(\eta_1)))\right|\leq H, \qquad
    H:=|\min_{i}r_i(\eta_1)|+|\max_{i}r_i(\eta_1)|+\ln\tfrac{1}{\bar s}.
\end{equation}
Also, by Assumption~\eqref{A:integrability_log_2} and Cauchy inequality, we have $\mathbb{E}[H]<\infty$ and, for any $\gamma<0$, we get
\begin{equation}\label{eq:th:gamma_asympt:Hexp}
    \mathbb{E}\left[H\,e^{\gamma \min_{i}r_i(\eta_1)}\right]<\infty.
\end{equation}

Now, for any $k\in \mathbb{N}$, let us define
\begin{align*}
    X_k&:=\ln \langle\pi^{\gamma_k},R(\eta_1)\rangle+\bar{v}^0(G(\pi^{\gamma_k},R(\eta_1))), \\
    Y&:=\ln \langle\bar\pi,R(\eta_1)\rangle+\bar{v}^0(G(\bar\pi,R(\eta_1))),\\
    \Delta_k&:=X_k-Y,
\end{align*}
so that the second term in~\eqref{eq:th:gamma_asympt:split} equals $|\mu^{\gamma_k}[X_k]-\mu^{\gamma_k}[Y]|$. In the following, we use two properties of $\Delta_k$. First, since $\pi^{\gamma_k}\to\bar\pi$ and, for each fixed $R\in (0,\infty)^d$, the maps $\pi'\to \ln \langle\pi',R\rangle$ and $\pi'\to \bar{v}^0(G(\pi',R))$ are continuous, we have
\begin{equation}\label{eq:th:gamma_asympt:as}
    \Delta_k\to 0, \quad \mathbb{P}\text{-a.s.}, \quad k\to \infty.
\end{equation}
Second, by~\eqref{eq:th:gamma_asympt:envelope} applied with $h=\bar{v}^0$, both $X_k$ and $Y$ are bounded in absolute value by the integrable random variable $H$; in particular,
\begin{equation}\label{eq:th:gamma_asympt:dom}
    X_k\geq -H, \qquad Y\geq -H, \qquad |\Delta_k|\leq 2H.
\end{equation}

Next, using the robust representation of the entropic utility, for $\gamma<0$ and any random variable $U$ with $\mathbb{E}[e^{\gamma U}]<\infty$, we have
\begin{equation}\label{eq:th:gamma_asympt:DV}
    \mu^{\gamma}[U]=\frac{1}{\gamma}\ln \mathbb{E}\!\left[e^{\gamma U}\right]=\inf_{Q\ll \mathbb{P}}\left(\mathbb{E}_{Q}[U]-\frac{1}{\gamma}H(Q\,|\,\mathbb{P})\right),
\end{equation}
where $H(Q\,|\,\mathbb{P})$ denotes the relative entropy of $Q$ with respect to $\mathbb{P}$, and the infimum is attained at the measure $Q^\gamma_U$ given by $\tfrac{dQ^\gamma_U}{d\mathbb{P}}:=\tfrac{e^{\gamma U}}{\mathbb{E}[e^{\gamma U}]}$; see, e.g.,~\cite[Example~4.34]{FolSch2016} or~\cite[Lemma A.1]{Bar2019} for details. Thus, using $Q^{\gamma_k}_Y$ as a (sub-optimal) test measure in the representation~\eqref{eq:th:gamma_asympt:DV} of $\mu^{\gamma_k}[X_k]$, we obtain
\[
\mu^{\gamma_k}[X_k]-\mu^{\gamma_k}[Y]\leq \mathbb{E}_{Q^{\gamma_k}_Y}[X_k]-\mathbb{E}_{Q^{\gamma_k}_Y}[Y]=\mathbb{E}_{Q^{\gamma_k}_Y}[\Delta_k]\leq \mathbb{E}_{Q^{\gamma_k}_Y}[|\Delta_k|].
\]
By symmetry, interchanging the roles of $X_k$ and $Y$, we also get $\mu^{\gamma_k}[Y]-\mu^{\gamma_k}[X_k]\leq \mathbb{E}_{Q^{\gamma_k}_{X_k}}[|\Delta_k|]$, which implies
\begin{equation}\label{eq:th:gamma_asympt:5}
    |\mu^{\gamma_k}[X_k]-\mu^{\gamma_k}[Y]|\leq \max\left(\frac{\mathbb{E}\!\left[|\Delta_k|\,e^{\gamma_k Y}\right]}{\mathbb{E}[e^{\gamma_k Y}]},  \frac{\mathbb{E}\!\left[|\Delta_k|\,e^{\gamma_k X_k}\right]}{\mathbb{E}[e^{\gamma_k X_k}]}\right).
\end{equation}
It remains to show that both ratios in~\eqref{eq:th:gamma_asympt:5} tend to $0$. We show the argument only for the one involving $Y$; the argument for $X_k$ is analogous. Fix $\gamma^*<0$ with $\gamma_k\in [\gamma^*,0)$ for all $k$ (possible as $\gamma_k\uparrow 0$). Using $Y\geq \min_i r_i(\eta_1)-\ln\tfrac1{\bar s}$ and $\gamma_k<0$, we get the pointwise bound $e^{\gamma_k Y}\leq \bar s^{\,\gamma^*}\,e^{\gamma^* \min_i r_i(\eta_1)}\vee 1$. Thus, we get by~\eqref{eq:th:gamma_asympt:dom},
\[
|\Delta_k|\,e^{\gamma_k Y}\leq 2H\left(1+\bar s^{\,\gamma^*}e^{\gamma^* \min_i r_i(\eta_1)}\right)=:\bar H,
\]
and, by~\eqref{eq:th:gamma_asympt:Hexp}, we have $\mathbb{E}[\bar H]<\infty$. Since $|\Delta_k|\to 0$ $\mathbb{P}$-a.s.\ by~\eqref{eq:th:gamma_asympt:as} and $\gamma_k\uparrow 0$ gives $e^{\gamma_k Y}\to 1$ $\mathbb{P}$-a.s., the dominated convergence theorem yields $\mathbb{E}[|\Delta_k|\,e^{\gamma_k Y}]\to 0$. Also, we obtain $\mathbb{E}[e^{\gamma_k Y}]\to 1$ by the same argument. Hence, we have $\frac{\mathbb{E}\!\left[|\Delta_k|\,e^{\gamma_k Y}\right]}{\mathbb{E}[e^{\gamma_k Y}]}\to 0$ as $k\to \infty$, which concludes the proof of this step.

\medskip
\textsc{Step 3.} We show that $|F^{\gamma_k}(\bar\pi,\bar{v}^{0}) - F^{0}(\bar\pi,\bar{v}^{0})|\to 0$ as $k\to\infty$. In fact, this follows directly from the fact that $\mu^{\gamma_k}[Z]\to \mathbb{E}[Z]$, where we set $Z:=\ln \langle\bar\pi,R(\eta_1)\rangle+\bar{v}^0(G(\bar\pi,R(\eta_1)))$. This concludes the proof.
\end{proof}
\begin{remark}\label{rm:remark_uniqueness}
    In fact, in the proof of Theorem~\ref{th:gamma_asympt_strategy}, we only need Assumption~\eqref{A:growth_gamma} to be satisfied for $\gamma=0$. Indeed, the key element is the uniqueness (up to a constant) of a map $\bar{v}^0$ solving~\eqref{eq:th:gamma_asympt:1} which is guaranteed by Theorem~\ref{th:uniqueness} with $\gamma=0$.
\end{remark}
\section{Discrete time grid approximation}\label{S:discrete}

In this section, we discuss how to approximate the optimal value of~\eqref{eq:J(pi)_log}     with the help of intervention times restricted to the discrete time grid. This problem is important from a numerical perspective, as it requires the simulation of the controlled process trajectory only in the fixed set of grid points.

We start by introducing the auxiliary notation. Let us recall the sequence of waiting times $\eta_i$ from Section~\ref{S:preliminaries} and, for any $m\in \mathbb{N}$, let us define
\begin{equation}\label{eq:eta_m}
    \eta_i^m:=\sum_{j=0}^\infty \frac{j+1}{2^m} 1_{\{\frac{j}{2^m}< \eta_i\leq\frac{j+1}{2^m} \}};
\end{equation}
note that $\eta_i^m$ is simply a non-anticipative approximation of $\eta_i$ with values restricted to the time-grid $\{\frac{k}{2^m}\colon k\in \mathbb{N}_0\}$. Next, we define recursively
\[
\tau_0^m:=0, \quad \tau_{n+1}^m=\tau_n^m+\eta_{n+1}^m, \quad n\in \mathbb{N}.
\]
Also, by $\mathcal{A}_m(\pi)$, we denote the family of self-financing portfolio weights $\Pi=(\pi(t))_{t\geq 0}$ with the intervention times given by $(\tau_n^m)_{n\in \mathbb{N}}$. Next, we use the notation
\begin{equation}\label{eq:J(pi)_log:m}
    J^\gamma_m(\mathbf\Pi,\pi): =\liminf_{n\to\infty}\frac{1}{\mathbb{E}[\tau_n^m]}\mu^\gamma \left[\sum_{i=0}^{n-1} \ln \frac{W(\tau_{i+1}^{m-})}{W(\tau_i^{m-})}\right], \quad m\in \mathbb{N}, \pi \in \mathcal{S}, \mathbf\Pi\in \mathcal{A}_m(\pi).
\end{equation}
In the following theorem, we show that the optimal value of $J^\gamma_m$ converges to the optimal value of $J^\gamma$ as $m\to \infty$. Effectively, this means that the portfolio optimisation problem with arbitrary intervention times could be approximated with optimal portfolio value with intervention times on a discrete time grid. To obtain the result, we use a stronger version of Assumption~\eqref{A:integrability_log} given as
\begin{enumerate}
\item[(\namedlabel{A:integrability_log2}{$\mathcal{A}3''$})] (Integrability).  For any $\gamma<0$ there exists $h>0$ such that
\[
\mathbb{E}\left[e^{\gamma  \min_{i=1, \ldots, d}\inf_{t\in [0,h]}r_i(\eta_1+t)}\right]<\infty,\qquad\text{and}\qquad \mathbb{E}\Big[\max_{i=1,\ldots,d}\sup_{t\in [0,h]}r_i(\eta_1+t)\Big]<\infty.
\]
\end{enumerate}

\begin{theorem}\label{th:discrete_convergence}
    Let $\gamma<0$, and let $J^\gamma$ and $J^\gamma_m$ be given by~\eqref{eq:J(pi)_log} and~\eqref{eq:J(pi)_log:m}, respectively.  Also, assume~\eqref{B:increments},~\eqref{B:waiting_times}, and~\eqref{A:integrability_log2}. Then, for any $\pi \in \mathcal{S}$, we have
    \begin{equation}\label{eq:th:discrete_convergence}
            \lim_{m\to\infty}\sup_{\Pi \in \mathcal{A}_m(\pi)}J^\gamma_m(\Pi,\pi) = \sup_{\Pi \in \mathcal{A}(\pi)}J^\gamma(\Pi,\pi).
    \end{equation}
\end{theorem}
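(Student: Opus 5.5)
The plan is to reduce the statement to the continuity of the optimal ergodic constant with respect to the waiting-time law, by comparing the Bellman equations of the original and the grid problems. For every $m$, the grid problem is again of the form studied in Section~\ref{S:preliminaries}. The waiting times $\eta_i^m$ are i.i.d., independent of the prices, strictly positive, and satisfy $\eta_i\le\eta_i^m\le\eta_i+2^{-m}$, so $\mathbb{E}[\eta_1^m]<\infty$ and \eqref{B:waiting_times} holds. Moreover, for $m$ large enough that $2^{-m}\le h$, Assumption~\eqref{A:integrability_log2} gives \eqref{A:integrability_log} for $R(\eta_1^m)$, because $r_i(\eta^m_1)=r_i(\eta_1+t)$ with $t=\eta_1^m-\eta_1\in[0,h]$.

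Hence Theorem~\ref{th:existence_concave} and Theorem~\ref{th:verification} apply to each grid problem. They give pairs $(\bar v_m,\bar\lambda_m)$ solving \eqref{eq:Bellman_log} with $\eta_1$ replaced by $\eta_1^m$, satisfying $\bar v_m(\pi_0)=0$, $\spann\bar v_m\le\ln\frac1{\bar s}$, and Lipschitz constant $L$, with $\sup_{\mathcal{A}_m(\pi)}J^\gamma_m=\bar\lambda_m$. On the original side we have $\sup_{\mathcal{A}(\pi)}J^\gamma=\bar\lambda^\gamma$. So it suffices to show $\bar\lambda_m\to\bar\lambda^\gamma$.

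The bounds on $\bar v_m$ are uniform in $m$, so by Arzel\`a--Ascoli every subsequence has a further subsequence with $\bar v_{m_k}\to\tilde v$ uniformly. The sequence $\bar\lambda_m$ is bounded: by \eqref{eq:fixed_point}, $\bar\lambda_m\mathbb{E}[\eta^m_1]=\mathcal{T}_m\bar v_m(\pi_0)$, and the right-hand side is controlled by $\mu^\gamma[\min_i r_i(\eta^m_1)]$ and $\mathbb{E}[\max_i r_i(\eta_1^m)]$, both uniformly bounded by \eqref{A:integrability_log2}. We may therefore also assume $\bar\lambda_{m_k}\to\tilde\lambda$. Since $\mathbb{E}[\eta^m_1]\to\mathbb{E}[\eta_1]$, passing to the limit in the Bellman equation will show that $(\tilde v,\tilde\lambda)$ solves \eqref{eq:Bellman_log} for the original problem. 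By Theorem~\ref{th:verification}, the constant of any bounded continuous solution is the optimal value, so $\tilde\lambda=\bar\lambda^\gamma$ independently of the subsequence, and the full sequence converges.

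The main obstacle is the limit passage, namely
\[
\sup_{\pi'\in\mathcal{S}}\bigl|\mu^\gamma[\ln\langle\pi',R(\eta_1^{m})\rangle+\bar v_{m}(G(\pi',R(\eta^{m}_1)))]-\mu^\gamma[\ln\langle\pi',R(\eta_1)\rangle+\tilde v(G(\pi',R(\eta_1)))]\bigr|\to0 .
\]
Replacing $\bar v_m$ by $\tilde v$ costs at most $\Vert\bar v_m-\tilde v\Vert$ by \eqref{eq:mu_basic}. For the remaining difference, right-continuity of $\ln S$ in \eqref{B:increments} gives $R(\eta^m_1)\to R(\eta_1)$ a.s., since $\eta_1^m\downarrow\eta_1$. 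The integrands are then dominated, uniformly in $\pi'$ and in $m$ large, by $\max_i\sup_{t\le h}|r_i(\eta_1+t)|+\ln\frac1{\bar s}$. The exponentials $e^{\gamma(\cdot)}$ are dominated by $\bar s^{\gamma}e^{\gamma\min_i\inf_{t\le h}r_i(\eta_1+t)}$, which is integrable by \eqref{A:integrability_log2}. Dominated convergence therefore yields pointwise convergence in $\pi'$.

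Uniformity in $\pi'$ I would obtain by a compactness argument on $\mathcal{S}$. The key point is that the domination above does not depend on $\pi'$, so for any sequence $\pi'_k\to\pi'$ dominated convergence gives convergence along $(\pi'_k,m_k)$, and this yields uniform convergence. Finally, the supremum in $\pi'$ commutes with the limit because the $\ln s$ term does not depend on $m$.
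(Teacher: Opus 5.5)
Your proposal is correct and follows the paper's proof essentially step by step. The shared steps are: existence and verification for the grid problems once $2^{-m}\le h$; Arzel\`a--Ascoli applied to the normalised value functions; passage to the limit in the Bellman equation via right-continuity, dominated convergence under \eqref{A:integrability_log2}, and a compactness argument in $\pi'$; and identification of the limit constant through Theorem~\ref{th:verification}. Your explicit boundedness argument for $(\bar\lambda_m)$, based on $\mu^\gamma[\min_i\inf_{t\le h}r_i(\eta_1+t)]$ and $\mathbb{E}[\max_i\sup_{t\le h}r_i(\eta_1+t)]$, supplies a step the paper skips when it extracts a convergent subsequence. Your explicit exponential domination $\bar s^{\gamma}e^{\gamma\min_i\inf_{t\le h}r_i(\eta_1+t)}$ likewise makes concrete what the paper compresses into ``using Assumption~\eqref{A:integrability_log2}''.
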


\begin{proof}
    First, note that from Theorem~\ref{th:existence_concave} applied to the waiting times $(\eta_i^m)$, for a sufficiently big $m$ there exists a solution $(\bar{v}^\gamma_m, \bar\lambda^\gamma_m)\in \mathcal{C}_b(\mathcal{S})\times \mathbb{R}$ such that, for any $\pi \in \mathcal{S}$, we have
\begin{equation}\label{eq:Bellman_discrete}
    \bar{v}^\gamma_m(\pi)=\sup_{\pi'\in \mathcal{S}}\left( \ln s(\pi,\pi')-\bar\lambda^\gamma_m\mathbb{E}[\eta_1^m]+\mu^\gamma\left[  \ln\langle\pi',R(\eta_1^m)\rangle+\bar{v}^\gamma_m (G(\pi',R(\eta_1^m)))\right]\right).
\end{equation}
Also, using the argument from Theorem~\ref{th:verification}, we have
\[
\sup_{\Pi\in \mathcal{A}_m(\pi)}J^\gamma_m(\Pi,\pi) = \bar\lambda^\gamma_m, \quad \pi\in \mathcal{S}.
\]
Next, let $\bar\lambda^\gamma_{m_k}$ be subsequence of $\bar\lambda^\gamma_{m}$ which converges to some $\hat\lambda^\gamma$ as $k\to\infty$. As we show later, $\hat\lambda^\gamma$ is equal to $\sup_{\Pi \in \mathcal{A}(\pi)}J^\gamma(\Pi,\pi)$. Then, as the subsequence was chosen arbitrarily, we get~\eqref{eq:th:discrete_convergence}. In fact, to conclude the proof, we find a map $\hat{v}^\gamma\in \mathcal{C}_b(\mathcal{S})$ such that the following equation is satisfied
\begin{equation}\label{eq:Bellman2}
    \hat{v}^\gamma(\pi)=\sup_{\pi'\in \mathcal{S}}\left( \ln s(\pi,\pi')-\hat\lambda^\gamma\mathbb{E}[\eta_1]+\mu^\gamma\left[  \ln\langle\pi',R(\eta_1)\rangle+\hat{v}^\gamma (G(\pi',R(\eta_1)))\right]\right), \quad \pi\in \mathcal{S}.
\end{equation}
Then, using Theorem~\ref{th:verification}, we get $\hat\lambda^\gamma=\sup_{\Pi \in \mathcal{A}(\pi)}J^\gamma(\Pi,\pi)$.

Let $\hat{v}^\gamma_{m_k}(\pi):=\bar{v}^\gamma_{m_k}(\pi)-\bar{v}^\gamma_{m_k}(\bar\pi)$ for some fixed $\bar\pi\in \mathcal{S}$. Then, using Lemma~\ref{lm:apriori} we get that the family of functions $(\hat{v}^\gamma_{m_k})_{k\in \mathbb{N}}$ is uniformly bounded and equicontinous. Thus, by the Arzel\'{a}-Ascoli theorem, we may find a map $\hat{v}^\gamma\in \mathcal{C}_b(\mathcal{S})$ such that  $\hat{v}^\gamma_{m_k}$ converges uniformly along some subsequence (for brevity, denoted by $m_k$) to some $\hat{v}^\gamma \in \mathcal{C}_b(\mathcal{S})$ as $k\to\infty$.

Now, we show that~\eqref{eq:Bellman2} is satisfied. By straightforward calculation, for any $\pi \in \mathcal{S}$ and $k\in \mathbb{N}$, we have
\begin{equation}
    \hat{v}^\gamma_{m_k}(\pi)+\bar\lambda^\gamma_{m_k}\mathbb{E}[\eta_1^{m_k}]=\sup_{\pi'\in \mathcal{S}}\left( \ln s(\pi,\pi')+\mu^\gamma\left[  \ln\langle\pi',R(\eta_1^{m_k})\rangle+\hat{v}^\gamma_{m_k} (G(\pi',R(\eta_1^{m_k})))\right]\right).
\end{equation}
Also, by the construction, we obtain
\[
\lim_{k\to\infty}\left(\hat{v}^\gamma_{m_k}(\pi) + \bar\lambda^\gamma_{m_k}\mathbb{E}[\eta_1^{m_k}]\right) = \hat{v}^\gamma(\pi) + \bar\lambda^\gamma\mathbb{E}[\eta_1].
\]
Thus, setting
\begin{multline*}
    \Delta_k:=\sup_{\pi' \in \mathcal{S}}\left|\mu^\gamma\left[  \ln\langle\pi',R(\eta_1^{m_k})\rangle+\hat{v}^\gamma_{m_k} (G(\pi',R(\eta_1^m)))\right]\right.\\
    \left.- \mu^\gamma\left[  \ln\langle\pi',R(\eta_1)\rangle+\hat{v}^\gamma (G(\pi',R(\eta_1)))\right]\right|
\end{multline*}
and noting that 
\begin{multline*}
    \left|\sup_{\pi'\in \mathcal{S}}\left( \ln s(\pi,\pi')+\mu^\gamma\left[  \ln\langle\pi',R(\eta_1^{m_k})\rangle+\hat{v}^\gamma_{m_k} (G(\pi',R(\eta_1^{m_k})))\right]\right) \right.\\
    \left. -\sup_{\pi'\in \mathcal{S}}\left( \ln s(\pi,\pi')+\mu^\gamma\left[  \ln\langle\pi',R(\eta_1)\rangle+\hat{v}^\gamma (G(\pi',R(\eta_1)))\right]\right) \right|\leq \Delta_k,
\end{multline*}
it is enough to show $\Delta_k\to 0$ as $k\to \infty$. For the contradiction, suppose that there exists a subsequence (for brevity denoted by $\Delta_k$) and some $a>0$ such that $\Delta_k\to a$ as $k\to \infty$. Also, note that using the fact that $\hat{v}^\gamma_{m_k}\in \mathcal{C}_b(\mathcal{S})$ and $\hat{v}^\gamma\in \mathcal{C}_b(\mathcal{S})$, for any $k\in \mathbb{N}$, using Lemma~\ref{lm:Feller}, we may find $\pi'_k\in \mathcal{S}$ such that
\begin{multline}\label{eq:th:discrete_convergence:Delta}
    \Delta_k=\left|\mu^\gamma\left[  \ln\langle\pi'_k,R(\eta_1^{m_k})\rangle+\hat{v}^\gamma_{m_k} (G(\pi'_k,R(\eta_1^{m_k})))\right]\right.\\
    \left.- \mu^\gamma\left[  \ln\langle\pi'_k,R(\eta_1)\rangle+\hat{v}^\gamma (G(\pi'_k,R(\eta_1)))\right]\right|.
\end{multline}
Next, using the compactness of $\mathcal{S}$, we can find a further subsequence of $(\pi'_k)$ (for brevity still denoted by $(\pi'_k)$) and some $\pi'\in \mathcal{S}$ such that $\pi'_k\to \pi'$ as $k\to\infty$. Moreover, using the  right-continuity of paths for R(t), we obtain $\mathbb{P}$-a.s. as $k\to\infty$
\[
\ln\langle\pi'_k,R(\eta_1^{m_k})\rangle \to \ln\langle\pi',R(\eta_1)\rangle \quad \text{and}
\quad \ln\langle\pi'_k,R(\eta_1)\rangle\to \ln\langle\pi',R(\eta_1)\rangle.
\]
Similarly, we have $G(\pi'_k,R(\eta_1^{m_k}))\to G(\pi',R(\eta_1))$ as $k\to \infty$. Then, using the inequality
\begin{multline*}
    \left|\hat{v}^\gamma_{m_k} (G(\pi'_k,R(\eta_1^{m_k})))-\hat{v}^\gamma (G(\pi',R(\eta_1)))\right|\\
    \leq \left|\hat{v}^\gamma_{m_k} (G(\pi'_k,R(\eta_1^{m_k})))-\hat{v}^\gamma (G(\pi'_k,R(\eta_1^{m_k})))\right| \\
    \phantom{=}+\left|\hat{v}^\gamma (G(\pi'_k,R(\eta_1^{m_k})))-\hat{v}^\gamma (G(\pi',R(\eta_1)))\right|,
\end{multline*}
the continuity of $\hat{v}^\gamma$ and the fact that $\hat{v}^\gamma_{m_k}$ converges uniformly to $\hat{v}^\gamma$, we obtain $\hat{v}^\gamma_{m_k} (G(\pi'_k,R(\eta_1^{m_k})))\to\hat{v}^\gamma (G(\pi',R(\eta_1)))$ as $k\to \infty$. Using a similar argument we can show $\hat{v}^\gamma (G(\pi'_k,R(\eta_1)))\to \hat{v}^\gamma (G(\pi',R(\eta_1)))$ as $k\to \infty$. Thus, using Assumption~\eqref{A:integrability_log2}, from~\eqref{eq:th:discrete_convergence:Delta}, we obtain $\Delta_k\to 0$ as $k\to \infty$. This is a contradiction since it was assumed that $\Delta_k\to a>0$. The proof is complete.
\end{proof}


\section{Examples}\label{S:numerical}

In this section, we present a series of examples illustrating our results. First, we discuss exemplary settings where all the main assumptions used in this paper could be explicitly verified. Then, we present numerical simulations confirming the convergence results from Section~\ref{S:asymptotics} and Section~\ref{S:discrete}.

We start by exhibiting concrete market models for which assumptions~\eqref{B:increments}--\eqref{A:growth_gamma} hold.
 
\begin{example}[Heavy positive tails]\label{ex:heavy}
Let $\eta_i\equiv1$ and let the one-period log-returns $r(1)=(r_1(1),\dots,r_d(1))$ have independent components, each with a density positive on $\mathbb{R}$, and such that light left tail (i.e. $\mathbb{E}[e^{\theta (r_i(1))^-}]<\infty$ for every $\theta>0$, as for a Gaussian or exponentially tempered left tail) and a heavy right tail satisfying
\begin{equation}\label{eq:ex:heavy_tail}
    \mathbb{E}\big[(r_i(1))^+\big]<\infty\qquad\text{and}\qquad \mathbb{E}\big[e^{r_i(1)}\big]=\infty ,
\end{equation}
for instance $\mathbb{P}[r_i(1)>t]\sim t^{-2}$ as $t\to\infty$. Then~\eqref{B:increments} and~\eqref{B:waiting_times} hold trivially. Next, note that for any $\gamma\leq 0$, by the light left tail condition, we have
\[
\mathbb{E}[e^{\gamma r_1(1)}] = \mathbb{E}[1_{\{r_1(1)> 0\}}e^{\gamma r_1(1)}+1_{\{r_1(1)\leq 0\}}e^{\gamma r_1(1)}] \leq 1 +\mathbb{E}[e^{|\gamma| (r_1(1))^-}]<\infty.
\]
Thus, using the facts that
\begin{align*}
    \mathbb{E}&[e^{\gamma\min_{i=1, \ldots, d} r_i(1)}] = \mathbb{E}[\max_{i=1, \ldots, d} e^{\gamma r_i(1)}]\leq \sum_{i=1}^d \mathbb{E}[e^{\gamma r_i(1)}]<\infty,\\ 
    \mathbb{E}&[\max_{i=1, \ldots, d} r_i(1)]\le\sum_{i=1}^d\mathbb{E}[(r_i(1))^+]<\infty,
\end{align*}
we get~\eqref{A:integrability_log}. Moreover, the components have full support on $\mathbb{R}^d$, so~\eqref{A:support} holds. Finally, for any $i$ and any $\pi'\in\mathcal{S}$ with $\pi'_i=0$, the independence of the components yields
\[
    \mathbb{E}\big[R_i(1)\,\langle\pi',R(1)\rangle^{\gamma-1}\big]
    =\mathbb{E}[R_i(1)]\;\mathbb{E}\big[\langle\pi',R(1)\rangle^{\gamma-1}\big]=\infty,
\]
because $\mathbb{E}[R_i(1)]=\mathbb{E}[e^{r_i(1)}]=\infty$ by~\eqref{eq:ex:heavy_tail}, while, recalling $\gamma\leq 0$, we have 
\[
\mathbb{E}[\langle\pi',R(1)\rangle^{\gamma-1}]\le \pi_j'^{\,\gamma-1}\,\mathbb{E}[e^{(\gamma-1)r_j(1)}]<\infty
\]
for any $j$ with $\pi'_j>0$, again by the light left tails. Since the right-hand side of~\eqref{eq:growth_gamma} is finite, the resulting infinite numerator at the left-hand side (its denominator being finite by~\eqref{A:integrability_log}) makes~\eqref{A:growth_gamma} hold for every $\gamma\le0$. Consequently, all assumptions~\eqref{B:increments}--\eqref{A:growth_gamma} are satisfied. The same conclusion holds for an exponential-L\'evy model whose L\'evy measure has a heavy positive tail, $\int_{x>1}e^{x}\,\nu_i(dx)=\infty$ together with $\int_{x>1}x\,\nu_i(dx)<\infty$, and a Brownian or exponentially tempered negative part.
\end{example}
 
\begin{example}[Log-normal returns]\label{ex:lognormal}
Let $\ln S$ be a multivariate Brownian motion with drift and let $\eta_i\equiv1$. Then,~\eqref{B:increments} and~\eqref{B:waiting_times} hold trivially. Also, as the Gaussian log-returns have all exponential moments and $R(\eta_1)$ has full support, we have~\eqref{A:integrability_log}  and~\eqref{A:support}. It remains to verify the growth condition~\eqref{A:growth_gamma}. Here, the ratio from~\eqref{eq:growth_gamma} could be bounded explicitly, as we show in the exemplary setting. More specifically, following Remark~\ref{rm:remark_uniqueness}, we focus on $\gamma=0$ and for simplicity we set $d=2$. Then, for a boundary point $\pi'$, the ratio in~\eqref{eq:growth_gamma} is a ratio of log-normal moments. Thus, writing $r_1(\eta_1)-r_2(\eta_1)\sim\mathcal{N}(\mu_\Delta,\sigma_\Delta^2)$, condition~\eqref{A:growth_gamma} at the two vertices $\pi'=(0,1)$ and $\pi'=(1,0)$ reads
\[
    e^{\,\mu_\Delta+\sigma_\Delta^2/2}>\bar s^{-1}(1+C)\qquad\text{and}\qquad e^{-\mu_\Delta+\sigma_\Delta^2/2}>\bar s^{-1}(1+C),
\]
or equivalently $\sigma_\Delta^2>2\ln\!\big(\bar s^{-1}(1+C)\big)+2|\mu_\Delta|$. In Example~\ref{ex:2d} we provide an explicit set of parameters, where this inequality is satisfied. 
\end{example}

Now, we numerically verify the convergence results stated in the paper, i.e. Theorem~\ref{th:gamma_asympt}, Theorem~\ref{th:gamma_asympt_strategy}, and Theorem~\ref{th:discrete_convergence}.

\begin{example}\label{ex:2d}
    In this example, we illustrate the vanishing gamma asymptotics results from Section~\ref{S:asymptotics}. To do this, we consider a two-asset portfolio with deterministic investment opportunities arrivals $\eta_i\equiv 1$. Logarithmic returns for the periods $[i,i+1]$ are i.i.d. and normally distributed with
    \[
    \begin{bmatrix}
        r_1(i, i+1) \\ r_2(i, i+1))
    \end{bmatrix}\sim \mathcal{N}_2\left(\mu,\Sigma \right)\quad \text{with}\quad \mu:= \begin{bmatrix}
        0.15 \\ 0.12
    \end{bmatrix},\, \Sigma:=\begin{bmatrix} 0.05 & -0.03 \\ -0.03 & 0.04 \end{bmatrix},
    \]
    and proportional transaction costs $c:=h:=[0.0005, 0.0005]$. It should be noted that the first asset has both a higher expected value and a higher variance of the log-returns. Thus, the risk-averse investor in the optimal strategy needs to balance return maximisation with risk control. 

    First, let us use Example~\ref{ex:lognormal} to verify the model assumptions. 
    In fact, using the notation from this example, we have $\mu_\Delta=0.03$, $\sigma_\Delta^2=0.15$, and $\bar c:=\max_{i=1,\ldots,d}\max(c_i,h_i)=0.0005$, so $\bar s^{-1}(1+C)\approx1.002$ and $2\ln\!\big(\bar s^{-1}(1+C)\big)+2|\mu_\Delta|\approx 0.064$. Thus, the bound is met with room to spare. In fact, a direct computation shows that~\eqref{A:growth_gamma} holds throughout the range $\gamma\in[-2,0]$ used in Figure~\ref{fig:ex2d}, the two vertices ratios in~\eqref{eq:growth_gamma} ranging over $[1.05,1.28]$ against a threshold of at most $1.004$.
    
    Using value iteration with 15 iterations on a discretized grid of portfolio weights, we compute the optimal ergodic portfolio strategies for risk-sensitive criterion~\eqref{eq:mu_log_return} with $\gamma \in \{ -2, -1.5, -1, -0.5, -0.25, -0.1, -0.01, 0\}$. As seen in the left panel of Figure~\ref{fig:ex2d}, the value of the problem increases monotonically to the optimal value of the risk-neutral problem as $\gamma$ increases to $0$. Additionally, we evaluate the ergodic value of the risk-neutral strategy when applied to the functional corresponding to different $\gamma$. As proved in Theorem~\ref{th:strategy_gamma_asympt}, the difference between the optimal value and the value corresponding to the risk-neutral strategy converges to $0$ as $\gamma$ converges to $0$, confirming that the risk-neutral strategy is almost optimal for the risk-averse problem when $\gamma$ is close to $0$. This is additionally justified in the right panel of Figure~\ref{fig:ex2d}, where we present the optimal strategy from Theorem~\ref{th:verification} for various $\gamma$. Note that, on the plot, it is enough to show only the first coordinates of the pre-rebalancing and post-rebalancing weights, as the problem is two-dimensional and the coordinates sum up to 1. As expected, the optimal strategies exhibit increasing conservatism when $\gamma$ becomes more negative, with a smaller proportion of capital invested in the first asset. When $\gamma$ converges to $0$, the optimal strategies converge to each other, as shown in Theorem~\ref{th:gamma_asympt_strategy}.

    \begin{figure}
        \centering
        \includegraphics[width=0.45\linewidth]{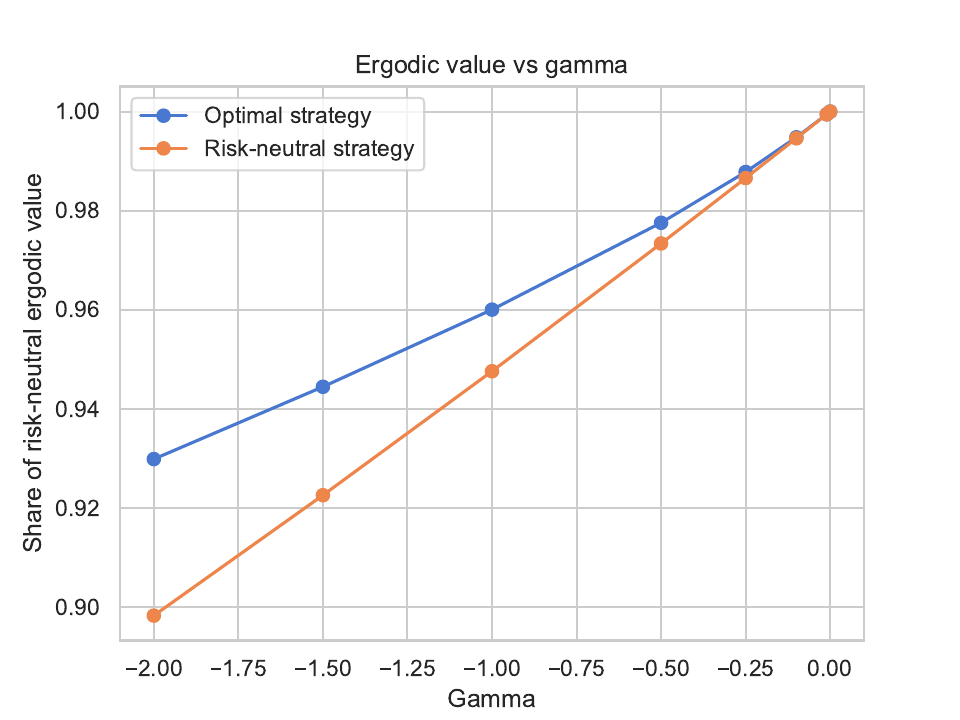}
        \includegraphics[width=0.45\linewidth]{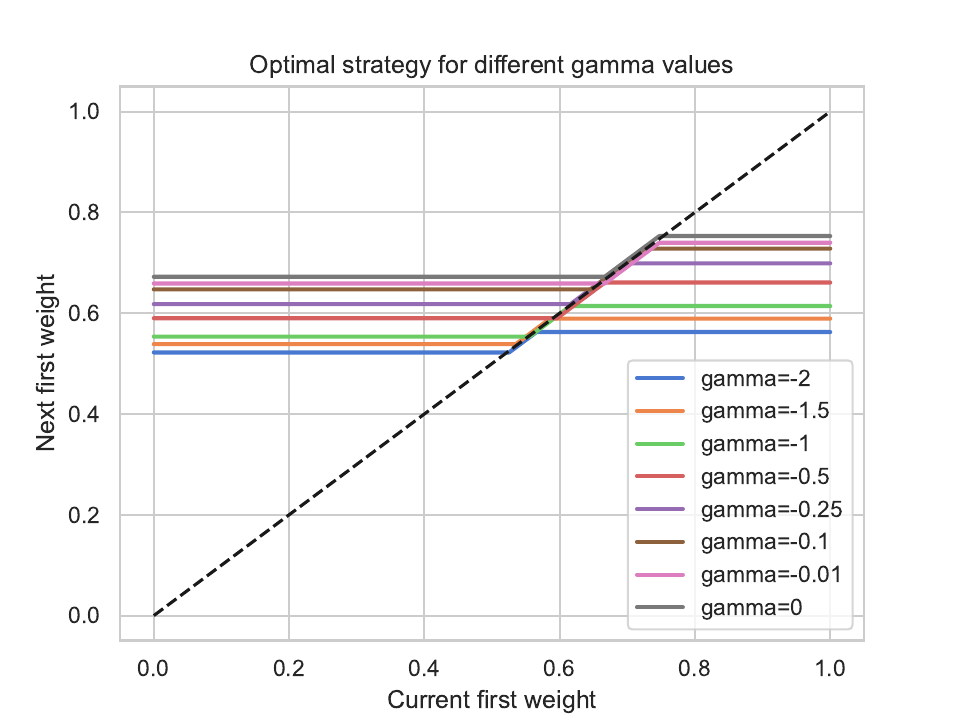}
        \caption{Optimal values and optimal strategies in Example~\ref{ex:2d}. The left panel shows the ratios of optimal values (blue dots) and values corresponding to the risk-neutral strategy (orange dots) of the underlying problems (for various $\gamma$) to the optimal value of the risk-neutral problem. The right panel shows the optimal strategies for different $\gamma$. On the X axis, there is the first coordinate $\pi_1$ of the pre-rebalancing weights $(\pi_1, 1-\pi_1)$ while on the $Y$ axis there is the first coordinate $\pi_1'$ of the optimal after-rebalancing weights $(\pi_1', 1-\pi_1')$.}
        \label{fig:ex2d}
    \end{figure}
\end{example}

\begin{example}\label{ex:discretisation}
    In this example, we validate the convergence of discretised problem from Section~\ref{S:discrete}. More specifically, we assume that $\eta_i$ are i.i.d. exponential random variables with the rate parameter $\lambda=1$. Also, for various $m$, we consider the discretised waiting times $\eta_i^m$ given by~\eqref{eq:eta_m}; note that in this setting $(\eta_i^m)_{i=1}^\infty$ are i.i.d. with geometric distribution on $\mathbb{N}_*$ and the probability of success $1-e^{-\lambda 2^{-m}}$. Next, we set $d=2$ and assume that the price process follows the bivariate Black-Scholes dynamics, or, equivalently, the log-returns satisfy
    \[
    \begin{bmatrix}
        r_1(t) \\ r_2(t))
    \end{bmatrix} =\mu t+\Sigma^{1/2} W_t,
    \]
    where $\mu$ and $\Sigma$ are as in Example~\ref{ex:2d}, and $(W_t)$ is the standard bivariate Brownian motion; note that this is simply a continuous-time version of the model studied in Example~\ref{ex:2d}. Finally, we set $\gamma=-2$.

    Using value iteration with 35 iterations on a discretized grid of portfolio weights, we compute the optimal value of the problem stated in~\eqref{eq:J(pi)_log:m} for $m\in \{1, 2, \ldots, 10\}$. As seen in the left panel of Figure~\ref{fig:ex:discretised}, the optimal value of the problem quickly converges, with relative error smaller than $0.001$ for $m=7$ (mesh grid $2^{-7}=1/128$). This is in line with Theorem~\ref{th:discrete_convergence}, which shows that the discretised problem converges to its continuous-time version. Also, the right panel of Figure~\ref{fig:ex:discretised} shows the expected value of the waiting times $\mathbb{E}[\eta_i^m]$ against $m$. Visible convergence is aligned with the fact that $\mathbb{E}[\eta_i^m]\to \mathbb{E}[\eta_i]=1/\lambda$ as $m\to\infty$.

        \begin{figure}
        \centering
        \includegraphics[width=0.45\linewidth]{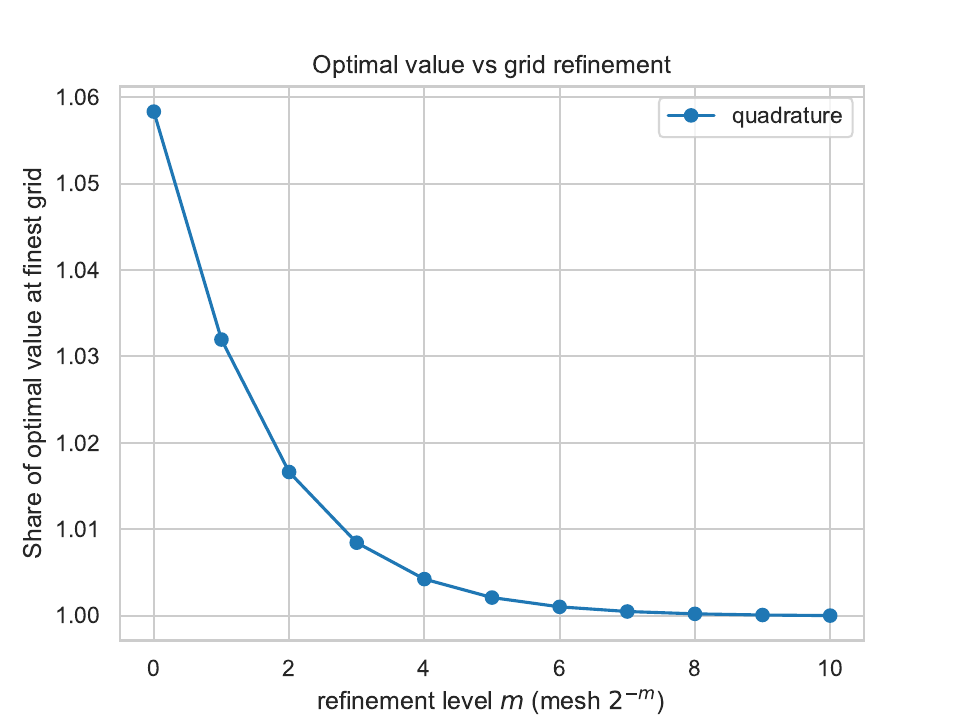}
        \includegraphics[width=0.45\linewidth]{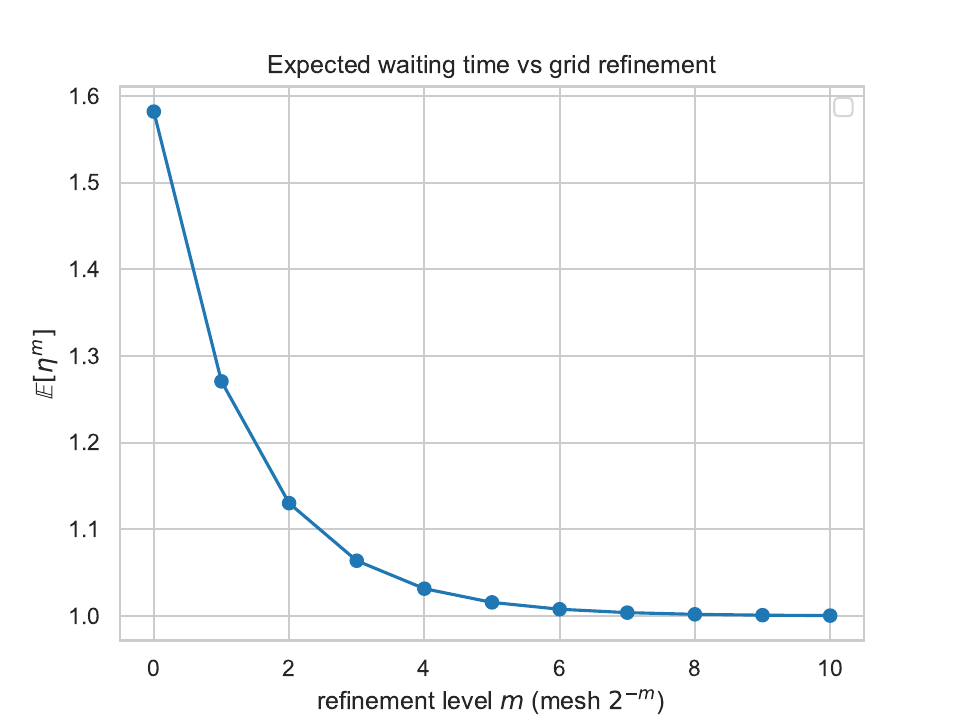}
        \caption{Optimal values and expected waiting times in Example~\ref{ex:discretisation}. The left panel shows the ratios of optimal values corresponding to different $m$ to the optimal value corresponding to $ m=10$. The right panel shows the expected value of the waiting time $\mathbb{E}[\eta_i^m]$ for different $m$.}
        \label{fig:ex:discretised}
    \end{figure}
    \end{example}

\section*{Declaration of generative AI use}
\noindent During the preparation of this work, the authors used large language model assistants to support proof idea generation, proofreading, and code writing for numerical examples. All AI-assisted output was used under supervision: the authors verified every derivation and numerical result and edited the text as needed. The authors take full responsibility for the content of the publication.

\section*{Funding}
Damian Jelito and {\L}ukasz Stettner acknowledge research support by Polish National Science Centre grant no. 2024/53/B/ST1/00703.

\appendix

\bibliographystyle{agsm}
\bibliography{RSC_bibliografia}

\end{document}